\numberwithin{equation}{section}
 \newtheorem{lemma}{Lemma}[section]
 \newtheorem{proposition}[lemma]{Proposition}
 \newtheorem{theorem}{Theorem}
 \newtheorem{corollary}[lemma]{Corollary}
 \theoremstyle{remark}
 \newtheorem{remark}{Remark}[section]
\begin{document}

\title{\bf Asymptotic stability of wave patterns
to compressible viscous and heat-conducting gases
in the half space}
\author{{\bf Ling Wan}\footnote{
School of Mathematics and Statistics, Wuhan University,
Wuhan 430072, China
(ling.wan@whu.edu.cn).
This author was partially supported by the Fundamental Research Funds for the Central Universities under contract 2015201020203.}\,\,,
{\bf Tao Wang}\footnote{
School of Mathematics and Statistics, Wuhan University,
Wuhan 430072, China
(tao.wang@whu.edu.cn).}\,\,,
{\bf Huijiang Zhao}\footnote{
School of Mathematics and Statistics, Wuhan University,
Wuhan 430072, China (hhjjzhao@whu.edu.cn).
This author was partially supported by the grants
from the National Natural Science Foundation of China under contracts
10925103, 11271160, and 11261160485.
}}

\date{}

\maketitle
\vspace*{-5mm}

\begin{abstract}
We study the large-time behavior of solutions
to the compressible Navier-Stokes equations
for a viscous and heat-conducting ideal polytropic gas
in the one-dimensional half-space.
A rarefaction wave
and its superposition with a non-degenerate stationary solution
are shown to be asymptotically stable for the outflow problem
with large initial perturbation and general adiabatic exponent.

  \bigbreak
  {\bf Keywords:} Compressible Navier-Stokes equations;
rarefaction wave; stationary solution; stability; large initial perturbation
  \bigbreak
  {\bf Mathematics Subject Classification:} 35B35 (35B40 35Q35 76N10)
\end{abstract}

\section{Introduction}
The one-dimensional motion of a compressible viscous and heat-conducting gas in the
half space $\mathbb{R}_+:=(0,\infty)$ can be formulated by the compressible
Navier-Stokes equations
\begin{equation} \label{NS_E}
  \left\{
  \begin{aligned}
    \rho_t+(\rho u)_x&=0,\\
    (\rho u)_t+(\rho u^2+ P)_x&=(\mu u_x)_x,\\
    (\rho{E})_t+(\rho u{E}+uP)_x&
    =(\kappa\theta_x+\mu uu_x)_x,
  \end{aligned}
  \right.
\end{equation}
where $t>0$ and $x\in\mathbb{R}_+$ stand for the time variable and
the spatial variable, respectively,
and the primary dependent variables are the density $\rho$, the velocity
$u$ and the temperature $\theta$. The specific total energy
${E}=e+\frac12u^2$
with $e$ being the specific internal energy.
It is known from thermodynamics that only two of the thermodynamic
variables $\rho,$ $\theta$, $P$ (pressure), $e$ and $s$ (specific entropy) are
independent.
We focus on the ideal polytropic gas, which is expressed in normalized units
by the following constitutive relations
\begin{equation}\label{ideal}
  P=R\rho\theta,   \quad e=c_v\theta, \quad
  s=c_v\ln(\rho^{1-\gamma}\theta),
\end{equation}
where $R>0$ is the gas constant, $\gamma>1$ the adiabatic exponent and
 $c_v=R/(\gamma-1)$ the specific heat at constant volume.
Positive constants $\mu$ and $\kappa$ are the viscosity
and the heat conductivity, respectively.

The system \eqref{NS_E}-\eqref{ideal} is supplemented with the initial condition
\begin{equation}\label{initial}
(\rho, u, \theta)|_{t=0}=(\rho_0, u_0, \theta_0),
\end{equation}
which is assumed to satisfy the far-field condition
\begin{equation}\label{infinity}
  \lim_{x\to\infty}(\rho_0, u_0, \theta_0)(x)=(\rho_+,u_+,\theta_+),
\end{equation}
where $\rho_+>0$, $u_{+}$ and $\theta_{+}>0$ are constants.
For boundary conditions, we take
\begin{equation}
  \label{bdy}
   (u,\theta)(t,0)=(u_-,\theta_{-}),
\end{equation}
where $u_{-}$ and $\theta_->0$ are constants.
The initial data \eqref{initial} is assumed to satisfy
certain compatibility conditions as usual.

The boundary condition $u(t,0)=u_-<0$ means that
the fluid blows out from the boundary,
and hence the initial boundary value problem \eqref{NS_E}-\eqref{bdy} with
$u_-<0$ is called the outflow problem.
The problem \eqref{NS_E}-\eqref{bdy} with $u_-=0$ is called
the impermeable wall problem, which has been studied in
\cite{HM09MR2511653,HLSMR2730324,WZZ13MR3063535,MN00MR1738558,
MM99MR1682659} and so on.
According to the theory of well-posedness for initial boundary value problem,
one has to impose one extra boundary condition
$\rho(t,0)=\rho_-$ on $\{x=0\}$ for the case when
$u_->0$.
This case is called the inflow problem and has been
investigated by Matsumura et al.
\cite{QW11MR2765694,QW09MR2578799,HLSMR2730324,
FLWZMR3260233,HMS03MR1997442,MN01MR1888084}.
We refer to Matsumura \cite{Ma01MR1944189}
for a complete classification about
the large-time behaviors of solutions
to initial boundary value problems of the isentropic compressible
Navier-Stokes equations in the half space $\mathbb{R}_+$.

The main purpose of this article is to study the large-time behavior of
solutions to the outflow problem \eqref{NS_E}-\eqref{bdy}.
The nonlinear stability
of the stationary solution, the rarefaction wave and their composition
has been addressed in \cite{KNNZMR2755498,Q11MR2785974} under
small initial perturbation.
For large perturbation case,
Qin  \cite{Q11MR2785974} proved that
the non-degenerate stationary solution
is asymptotically stable under the technical assumption
that the adiabatic exponent $\gamma$ is close to 1.
Recently, Wan et al. \cite{WWZ15} establish the asymptotic stability of
the non-degenerate stationary solution
with large initial perturbation and general adiabatic exponent $\gamma$.
In this article we are going to study the case when the corresponding
time-asymptotic state is a rarefaction wave or its superposition with
a non-degenerate stationary solution under large initial perturbation.

We first investigate the large-time behavior of solutions toward
the rarefaction wave for
the outflow problem or the impermeable wall problem
\eqref{NS_E}-\eqref{bdy}.
To this end,
we assume that positive constants $\rho_+$, $u_{\pm}$ and $\theta_{\pm}$
satisfy
\begin{equation}\label{Thm1a}
  u_-=u_++\int_{\rho_+}^{(\theta_-/\theta_+)^{\frac{1}{\gamma-1}}\rho_+}
  \sqrt{R\gamma\rho_+^{1-\gamma}\theta_+}z^{\frac{\gamma-3}{2}}
  \mathrm{d}z< u_+,
\end{equation}
so that $(\rho_+,u_+,\theta_+)\in R_3(\rho_-,u_-,\theta_-)$ for
\begin{equation}\label{rho-}
  \rho_-=(\theta_-/\theta_+)^{\frac{1}{\gamma-1}}\rho_+.
\end{equation}
Here $R_3(\rho_-,u_-,\theta_-)$ is the $3$-rarefaction wave curve through
$(\rho_-,u_-,\theta_-)$ given by
\begin{equation*}
  R_3(\rho_-,u_-,\theta_-)
  :=\left\{
  (\rho,u,\theta)\left|
  \begin{aligned}
     &\rho>\rho_-,\ \rho^{1-\gamma}\theta=\rho_-^{1-\gamma}\theta_-,\\[-1mm]
  &u=u_-+\int_{\rho_-}^{\rho}
  \sqrt{R\gamma\rho_-^{1-\gamma}\theta_- }z^{\frac{\gamma-3}{2}}\mathrm{d}z
  \end{aligned}
  \right.
\right\}.
\end{equation*}
We assume further that
\begin{equation}\label{Thm1b}
u_-+\sqrt{R\gamma\theta_-}\geq 0.
\end{equation}
Then as time $t$ goes to infinity,
the solution of the problem \eqref{NS_E}-\eqref{bdy} is expected to
converge to the $3$-rarefaction wave $(\rho^R,u^R,\theta^R)(t,x)$
connecting $(\rho_{-},u_{-},\theta_{-})$ and $(\rho_{+},u_{+},\theta_{+})$,
which is the unique entropy solution of the Riemann problem for
the corresponding hyperbolic system of \eqref{NS_E}-\eqref{ideal}
(i.e. the compressible Euler system)
\begin{equation} \label{Euler}
  \left\{
  \begin{aligned}
    \rho_t+(\rho u)_x&=0,\\
    (\rho u)_t+(\rho u^2+ P)_x&=0,\\
    (\rho{E})_t+(\rho u{E}+uP)_x&=0
  \end{aligned}
  \right.
\end{equation}
for $(t,x)\in\mathbb{R}_+\times\mathbb{R}$ with initial data
\begin{equation} \label{Riemann}
  (\rho,u,\theta)(0,x)=
  \left\{
  \begin{aligned}
    &(\rho_-,u_-,\theta_-)& & \quad{\rm for}\ x<0, \\
    &(\rho_+,u_+,\theta_+)& & \quad{\rm for}\ x>0.
  \end{aligned}
  \right.
\end{equation}
Note that
$(\rho^R,u^R,\theta^R)(t,x)\equiv (\rho_{-},u_{-},\theta_{-})$
for each $(t,x)\in[0,\infty)\times(-\infty,0]$ due to \eqref{Thm1b}.

We  construct a smooth approximation
$(\bar{\rho},\bar{u},\bar{\theta})$ of
$(\rho^R,u^R,\theta^R)$ for deriving the
stability of the rarefaction wave.
As in \cite{HMS03MR1997442}, we consider the Cauchy problem for
the Burgers equation
\begin{equation}\label{Burgers}
  \left\{
  \begin{aligned}
    w_t+ww_x&=0&&\quad {\rm for}\ (t,x)\in\mathbb{R}_+\times\mathbb{R},\\[-0.5mm]
    w(0,x)&=
      w_-+ \tilde{w}k_q\int_0^{x_+} z^q{\rm e}^{-z}\mathrm{d}z &&\quad {\rm for}\ x\in\mathbb{R},
  \end{aligned}
  \right.
\end{equation}
where $\tilde{w}:=w_+-w_-$, $q\geq 16$ is some fixed constant,
$x_+:=\max\{x,0\}$ is the positive part of $x$, and
 the constant $k_q$ satisfies
$$k_q\int_0^{\infty} z^q{\rm e}^{-z}\mathrm{d}z=1.$$
The smoothed rarefaction wave  $(\bar{\rho},\bar{u},\bar{\theta})$
connecting $(\rho_{-},u_{-},\theta_{-})$ and
 $(\rho_{+},u_{+},\theta_{+})$  is defined by
\begin{equation}\label{SRW}
  \left\{
  \begin{aligned}
    \lambda_3(\bar{\rho},\bar{u},\bar{\theta})(t,x)&=w(1+t,x),\\[2mm]
    (\bar{\rho}^{1-\gamma}\bar{\theta})(t,x)&=\rho_+^{1-\gamma}\theta_+,\\
  \bar{u}(t,x)&=u_++\int_{\rho_+}^{\bar{\rho}(t,x)}
  \sqrt{R\gamma\rho_+^{1-\gamma}\theta_+ }z^{\frac{\gamma-3}{2}}\mathrm{d}z,
  \end{aligned}\right.
\end{equation}
where
$\rho_-$ is given by \eqref{rho-} and
$w(t,x)$ is the unique solution of \eqref{Burgers} with
$w_{\pm}=\lambda_3(\rho_{\pm},u_{\pm},\theta_{\pm}).$

Now we state our stability result of the
rarefaction wave $(\rho^R,u^R,\theta^R)(t,x)$ to the outflow problem or
the impermeable wall problem
with large initial perturbation.
\begin{theorem} \label{thm1}
  Assume that $(\rho_+,u_{\pm},\theta_{\pm})$
   and the initial data $(\rho_0,u_0,\theta_0)$
  satisfy \eqref{Thm1a}, \eqref{Thm1b} and
  \begin{equation}\label{Thm1c}
    \inf_{x\in\mathbb{R}_+}\left\{\rho_0(x),\theta_0(x)\right\}>0,\quad
    (\rho_0-\bar{\rho},u_0-\bar{u},\theta_0-\bar{\theta})\in H^1(\mathbb{R}_+).
  \end{equation}
  Then there exists a positive constant $\epsilon_1$ such that
  if $u_-\leq 0$ and the boundary strength
  $\delta:=\left|(u_+-u_-,\theta_+-\theta_-)\right|\leq \epsilon_1$,
  the initial boundary value problem
  \eqref{NS_E}-\eqref{bdy}
  admits a unique solution
  $(\rho,u,\theta)$ satisfying
  \begin{equation} \label{s_o_s1}  
      \begin{gathered}
    (\rho-\bar{\rho},u-\bar{u},\theta-\bar{\theta}) \in C([0,\infty);H^1(\mathbb{R}_+)),\\
    \rho_x-\bar{\rho}_x\in L^2(0,\infty;L^2(\mathbb{R}_+)),\quad
    (u_x-\bar{u}_x,\theta_x-\bar{\theta}_x)\in L^2(0,\infty;H^1(\mathbb{R}_+)).
    \end{gathered}
  \end{equation}
  Furthermore, the solution $(\rho,u,\theta)$
  converges to the rarefaction wave $(\rho^R,u^R,\theta^R)$
  uniformly as time tends to infinity:
  \begin{equation}\label{stability1}
    \lim_{t\to\infty}\sup_{x\in\mathbb{R}_+}
    \left|(\rho-{\rho}^R,u-{u}^R,\theta-{\theta}^R)(t,x)\right|=0.
  \end{equation}
\end{theorem}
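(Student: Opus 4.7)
The plan is to combine a standard local existence result with a continuation argument based on uniform-in-time a priori estimates for the perturbation
$$(\phi,\psi,\zeta)(t,x):=\bigl(\rho-\bar{\rho},\,u-\bar{u},\,\theta-\bar{\theta}\bigr)(t,x).$$
Subtracting the equations satisfied by $(\bar{\rho},\bar{u},\bar{\theta})$ from \eqref{NS_E} produces a coupled system for $(\phi,\psi,\zeta)$ whose inhomogeneities are expressible in $(\bar{\rho}_x,\bar{u}_x,\bar{\theta}_x)$ and second derivatives of $(\bar{u},\bar{\theta})$. The $L^p$-in-$x$ and time-decay bounds for the smoothed rarefaction wave \eqref{SRW}, inherited from the Burgers profile \eqref{Burgers}, ensure that these sources are time-integrable with norms controlled by the wave strength $\delta$. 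A direct computation using $w_-=u_-+\sqrt{R\gamma\theta_-}\geq 0$ shows that the $3$-characteristics of \eqref{Burgers} starting at $x\leq 0$ never enter $\mathbb{R}_+$, so that $(\bar{\rho},\bar{u},\bar{\theta})(t,0)=(\rho_-,u_-,\theta_-)$ for all $t\geq 0$. Consequently the perturbation inherits the homogeneous Dirichlet data $\psi(t,0)=\zeta(t,0)=0$, while $\phi(t,0)$ is unconstrained since no density boundary value is prescribed.

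With local existence in hand, the proof reduces to showing that on any existence interval $[0,T]$
$$M^{-1}\leq \rho,\theta\leq M,\qquad \|(\phi,\psi,\zeta)(t)\|_{H^1(\mathbb{R}_+)}^2+\int_0^t\!\Bigl(\|\phi_x\|_{L^2}^2+\|(\psi_x,\zeta_x)\|_{H^1}^2\Bigr)\mathrm{d}\tau\leq C,$$
with $M$ and $C$ independent of $T$. The starting point is a basic energy inequality obtained from the physical relative entropy--energy, a convex Lyapunov functional that vanishes at the smoothed rarefaction and is coercive in the $L^2$-norm of $(\phi,\psi,\zeta)$ once pointwise bounds on $(\rho,\theta)$ are in force. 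Its time derivative produces a viscous/conductive dissipation controlling $(\psi_x,\zeta_x)$, a good damping $\int_{\mathbb{R}_+}\bar{u}_x(\phi^2+\zeta^2)\,\mathrm{d}x$ having the correct sign because $\bar{u}_x>0$ along the $3$-rarefaction, and small residuals absorbed by the smallness of $\delta$ and the integrable decay of $(\bar{\rho}_x,\bar{u}_x,\bar{\theta}_x)$. The boundary contributions at $x=0$ reduce to a non-negative term proportional to $-u_-\geq 0$ plus a conductive flux whose sign is fixed by \eqref{Thm1b}; both can be either placed on the favorable side or absorbed.

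The central difficulty---and the principal departure from the small-perturbation theory---is the derivation of uniform pointwise upper and lower bounds on $\rho$ and $\theta$. Following the framework of \cite{WWZ15}, I would introduce a Kanel'-type auxiliary function constructed from the mass equation along particle trajectories and combine it with the basic entropy estimate to obtain $\inf\rho>0$ and $\sup\rho<\infty$ independently of $T$. Once the density is controlled, a maximum-principle-type argument applied to the $\theta$-form of the energy equation---using the integrability of $(\psi_x,\zeta_x)$ and of $\sqrt{\bar{u}_x}(\phi,\zeta)$ established above---yields analogous two-sided pointwise bounds for $\theta$. The non-constant background $(\bar{\rho},\bar{u},\bar{\theta})$ must be carefully tracked in each step, which is possible because its derivatives are of order $\delta$ in the relevant space-time norms. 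This step is expected to be the main obstacle.

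With $\rho$ and $\theta$ sandwiched between positive constants, the $H^1$ estimates for $(\phi,\psi,\zeta)$ follow by differentiating the perturbation system once in $x$, testing with $(\phi_x,\psi_x,\zeta_x)$, and closing via Gronwall together with the smallness of $\delta$; this yields the global regularity \eqref{s_o_s1} and completes the continuation. Finally, the uniform $H^1$-bound combined with the integrability $\int_0^\infty\|(\phi_x,\psi_x,\zeta_x)\|_{L^2}^2\,\mathrm{d}t<\infty$ implies $\|(\phi,\psi,\zeta)(t)\|_{L^\infty}\to 0$ as $t\to\infty$ by a standard argument, which together with the well-known property $\sup_{x\in\mathbb{R}_+}|(\bar{\rho},\bar{u},\bar{\theta})-(\rho^R,u^R,\theta^R)|(t,\cdot)\to 0$ of the smoothed rarefaction wave \eqref{SRW} yields the uniform convergence \eqref{stability1}.
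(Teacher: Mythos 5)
Your overall scaffolding---perturbing around the smoothed rarefaction wave $(\bar\rho,\bar u,\bar\theta)$, using the relative entropy--energy as the basic Lyapunov functional, exploiting the homogeneous Dirichlet data $\psi(t,0)=\zeta(t,0)=0$, the one-signed boundary term $\propto -u_-\geq 0$, and the smallness of $\delta$ to absorb source terms---matches the paper's strategy (the paper carries it out in detail for Theorem~2 and states that Theorem~1 is ``similar to and simpler than'' that proof). However, there is a genuine gap in your treatment of the temperature.

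You claim that ``a maximum-principle-type argument applied to the $\theta$-form of the energy equation \dots\ yields analogous two-sided pointwise bounds for $\theta$.'' This is not true, and the paper explicitly identifies this as the central obstacle. Applying the maximum principle to the temperature equation (as in Lemma~\ref{lem_lower2}) only yields the \emph{locally-in-time} lower bound
\begin{equation*}
    \inf_{\mathbb{R}_+}\theta(t,\cdot)
    \geq\frac{\inf_{\mathbb{R}_+}\theta(s,\cdot)}{C_3\inf_{\mathbb{R}_+}
    \theta(s,\cdot)\,(t-s)+1},
\end{equation*}
which decays to zero as $t-s\to\infty$ and hence cannot, by itself, close the a priori assumption on $m_2$ in \eqref{apriori}. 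Nor does the maximum principle furnish an upper bound: the uniform upper bound on $\theta$ is extracted from the $H^1$-estimates in Lemma~\ref{lem_the2} (see the step leading to \eqref{upper2}). To convert the deteriorating local lower bound into a uniform one, the paper introduces the careful six-step continuation scheme in Subsection~\ref{sec_proof}: after a fixed window $T_1$, one uses the time-integrated $H^1$-dissipation to select a good time $t_0'$ at which $\|\vartheta(t_0')\|_{L^\infty}\leq\tfrac12\lambda_3$, so the lower bound on $\theta$ is ``reset'' to a constant level before it can degrade, and this is iterated. Without this device, or an equivalent one, your continuation argument cannot close, since the smallness parameter $\epsilon_0$ in \eqref{asmp1} is calibrated against $m_2^{-50}$.

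A secondary, non-fatal, difference: for the uniform density bounds you propose a Kanel'-type auxiliary functional along particle paths, whereas the paper transforms to Lagrangian coordinates, introduces the cutoff $\varphi_z$ with floating parameter $z$ to localize the resulting free-boundary problem, and derives a Kazhikhov--Shelukhin-type local representation formula \eqref{v_form} for the specific volume (Lemmas~\ref{lem_v1}--\ref{lem_boun}). The localization is tailored to the moving Lagrangian boundary $y=Y(t)$ and to the non-constant background; if you go the Kanel' route you would need to verify it handles both of these features, which is not automatic. By contrast, the temperature issue above is a hard gap: the argument as written would not establish uniform positivity of $\theta$.
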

\begin{remark}
For the Cauchy problem to the compressible
  Navier-Stokes equations \eqref{NS_E}-\eqref{ideal}
  with generic adiabatic exponent $\gamma$ in the whole space $\mathbb{R}$,
   we can employ the methodology developed in this paper to
   obtain the time-asymptotic stability of the rarefaction waves
    under large initial perturbation,
   which extends the corresponding stability results
   in \cite{KMN86MR868811,NYZMR2083790} for the case with
   small initial perturbation or
    the case when $\gamma$ is close to $1$.
    We refer to a recent work \cite{HW15} for the
    stability of superposition of viscous contact wave
    and rarefaction waves to the Cauchy problem for the
    compressible Navier-Stokes equations in Lagrangian coordinate.
   \end{remark}

Next we intend to study the time-asymptotic stability of
the superposition of a non-degenerate stationary solution and a $3$-rarefaction wave.
For this purpose, we let $(\tilde{\rho},\tilde{u},\tilde{\theta})$
be the stationary solution of \eqref{NS_E}-\eqref{bdy} connecting
$({u}_-,{\theta}_-)$ and $({\rho}_m,{u}_m,{\theta}_m)$, namely
$(\tilde{\rho},\tilde{u},\tilde{\theta})$
depends solely on the variable $x$ and satisfies
\begin{equation}\label{BL1}
 \left\{
  \begin{aligned}
    (\tilde{\rho}\tilde{u})'&=0,\\
    (\tilde{\rho}\tilde{u}^2+ \tilde{P})'&=\mu \tilde{u}'',\\[0.9mm]
    (\tilde{\rho} \tilde{u}\tilde{E}
    +\tilde{u}\tilde{P})'
    &=\kappa\tilde{\theta}''+\mu (\tilde{u}\tilde{u}')'
  \end{aligned}
  \right.
\end{equation}
for $x\in\mathbb{R}_+$ and
\begin{equation}\label{BL2}
      (\tilde{u},\tilde{\theta})(0)=(u_-,\theta_-),\quad
    \lim_{x\to\infty}(\tilde{\rho},\tilde{u},\tilde{\theta})(x)
    =(\rho_m,u_m,\theta_m).
\end{equation}
where
$\tilde{P}:=R\tilde{\rho}\tilde{\theta}$
and
$\tilde{E}:=c_v\tilde{\theta}+\frac12\tilde{u}^2$.
A stationary solution $(\tilde{\rho},\tilde{u},\tilde{\theta})$ is
called to be
non-degenerate if for each $n\in\mathbb{N}$,
  \begin{equation}\label{decay1}
   |\partial_x^n(\tilde{\rho}-\rho_m,\tilde{u}-u_m,\tilde{\theta}-\theta_m)(x)|
   \leq C\tilde\delta \mathrm{e}^{-cx},
  \end{equation}
where
$C$, $c$ are positive constants and
$\tilde\delta:=\left|(u_m-u_-,\theta_m-\theta_-)\right|$
is the boundary strength of the stationary solution.
The existence of (non-degenerate) stationary solutions has been
shown by Kawashima et al. \cite{KNNZMR2755498} and
will be restated in section \ref{sec_pre}.

We assume that
\begin{equation}\label{Thm2a}
  (\rho_+,u_+,\theta_+)\in R_3(\rho_m,u_m,\theta_m),\quad
  \sqrt{R\gamma\theta_m}\geq -u_m>0,
\end{equation}
so that there exist a $3$-rarefaction wave
 $(\rho^R,u^R,\theta^R)$
connecting $(\rho_{m},u_{m},\theta_{m})$ and $(\rho_{+},u_{+},\theta_{+})$.
It is expected that the large-time behavior of solutions to the outflow problem
\eqref{NS_E}-\eqref{bdy} is determined by
the composition  $(\check\rho,\check{u},\check\theta)$
of the stationary solution $(\tilde{\rho},\tilde{u},\tilde{\theta})$
and the $3$-rarefaction wave  $(\rho^R,u^R,\theta^R)$:
\begin{equation}\label{check}
  (\check\rho,\check{u},\check\theta)(t,x)
  =(\tilde{\rho},\tilde{u},\tilde{\theta})(x)+
  (\rho^R,u^R,\theta^R)(t,x)-(\rho_{m},u_{m},\theta_{m}).
\end{equation}

In order to derive the stability result, we introduce
the smoothed asymptotic state
\begin{equation}\label{hat}
  (\hat\rho,\hat{u},\hat\theta)(t,x)
  =(\tilde{\rho},\tilde{u},\tilde{\theta})(x)+
  (\bar{\rho},\bar{u},\bar{\theta})(t,x)-(\rho_{m},u_{m},\theta_{m}),
\end{equation}
where $(\bar{\rho},\bar{u},\bar{\theta})$ is the
 smooth  rarefaction wave
connecting $(\rho_{m},u_{m},\theta_{m})$ and $(\rho_{+},u_{+},\theta_{+})$.
We define
$$\bar\delta:=\left|(u_m-u_+,\theta_m-\theta_+)\right|.
$$

Under the above preparation, we have the following stability result
on the superposition  $(\check\rho,\check{u},\check\theta)$
with large initial perturbation.
\begin{theorem}
  \label{thm2}
  Assume that there exists a non-degenerate stationary solution
  $(\tilde{\rho},\tilde{u},\tilde{\theta})$ connecting
$({u}_-,{\theta}_-)$ and $({\rho}_m,{u}_m,{\theta}_m)$.
Assume further that $(\rho_+,u_{+},\theta_{+})$
   and the initial data $(\rho_0,u_0,\theta_0)$
  satisfy \eqref{Thm2a}  and
  \begin{equation}\label{Thm2b}
    \inf_{x\in\mathbb{R}_+}\left\{\rho_0(x),\theta_0(x)\right\}>0,\quad
    (\rho_0-\hat{\rho},u_0-\hat{u},\theta_0-\hat{\theta})\in H^1(\mathbb{R}_+).
  \end{equation}
Then a positive constant  $\epsilon_2$ exists such that
 the outflow problem $\eqref{NS_E}$-\eqref{bdy}
 with  $\bar\delta+\tilde\delta\leq \epsilon_2$ has a unique solution
  $(\rho,u,\theta)$ satisfying
  \begin{equation} \label{s_o_s2}  
      \begin{gathered}
    (\rho-\hat{\rho},u-\hat{u},\theta-\hat{\theta}) \in C([0,\infty);H^1(\mathbb{R}_+)),\\
    \rho_x-\hat{\rho}_x\in L^2(0,\infty;L^2(\mathbb{R}_+)),\quad
    (u_x-\hat{u}_x,\theta_x-\hat{\theta}_x)\in L^2(0,\infty;H^1(\mathbb{R}_+)).
    \end{gathered}
  \end{equation}
  Furthermore, the solution $(\rho,u,\theta)$
  converges to the composition $(\check\rho,\check{u},\check\theta)$ of
  the stationary solution $(\tilde{\rho},\tilde{u},\tilde{\theta})$
and the rarefaction wave  $(\rho^R,u^R,\theta^R)$
  uniformly as time tends to infinity:
  \begin{equation}\label{stability2}
    \lim_{t\to\infty}\sup_{x\in\mathbb{R}_+}
    |(\rho-\check\rho,u-\check{u},\theta-\check{\theta})(t,x)|=0.
  \end{equation}
\end{theorem}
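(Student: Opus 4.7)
The plan is to combine the techniques used for Theorem \ref{thm1} (pure rarefaction) with those developed in \cite{WWZ15} for the pure non-degenerate stationary case, and to close everything by a continuation argument based on uniform-in-time a priori estimates. Set the perturbation
\[
(\phi,\psi,\zeta)(t,x):=(\rho-\hat\rho,\,u-\hat u,\,\theta-\hat\theta)(t,x),
\]
and derive the system satisfied by $(\phi,\psi,\zeta)$ on $\mathbb{R}_+\times(0,\infty)$ with the homogeneous boundary data $(\psi,\zeta)(t,0)=0$ and initial data in $H^1(\mathbb{R}_+)$. The inhomogeneous terms split naturally into three groups: a residual from the stationary part (which is exponentially localized near $x=0$ by \eqref{decay1}), a residual from the smoothed rarefaction $(\bar\rho,\bar u,\bar\theta)$ (which decays like $(1+t)^{-1}$ away from the rarefaction fan, see \eqref{SRW}), and a coupling/interaction term that is superexponentially small in $\bar\delta\tilde\delta$ owing to the different spatial scales of the two waves. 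Local existence is standard; the task is to propagate the $H^1$-norm uniformly in time together with uniform upper and lower bounds on $\rho,\theta$.

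The core of the argument is a Kanel--Kazhikhov type a priori estimate carried out in Eulerian variables. First I would set up a relative entropy functional of the form
\[
\mathcal{E}(t)=\int_{\mathbb{R}_+}\Bigl\{\rho\Bigl(\tfrac12\psi^2+R\hat\theta\,\Phi\bigl(\tfrac{\hat\rho}{\rho}\bigr)+c_v\hat\theta\,\Phi\bigl(\tfrac{\theta}{\hat\theta}\bigr)\Bigr)\Bigr\}\mathrm{d}x,\qquad \Phi(s)=s-\ln s-1,
\]
and derive the basic energy identity. The dissipation contains $\int\mu\zeta\psi_x^2/\theta+\kappa\zeta_x^2/\theta^2$, the spatial residuals give controlled right-hand sides by \eqref{decay1} and the convexity of the $3$-characteristic field along $(\bar\rho,\bar u,\bar\theta)$ (so that $\bar u_x\ge 0$), and the boundary flux at $x=0$ has the favorable sign \emph{provided} $u_-\le 0$, because the outflow condition produces a nonnegative boundary dissipation of the form $-u_-\{R\hat\theta_0\Phi(\hat\rho_0/\rho)+\cdots\}$. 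The control of $\Phi$ then yields global integrability of $\phi^2$ and $\zeta^2$, which, combined with an equation for $\log\rho$ and the Kanel trick, produces uniform two-sided bounds $0<c\le \rho,\theta\le C<\infty$ depending on the initial data but \emph{not} on its smallness. Next one derives $H^1$ estimates for $(\phi,\psi,\zeta)$ by multiplying the $\psi$-equation by $-\psi_{xx}/\rho$ and the $\zeta$-equation by $-\zeta_{xx}/\rho$, using the $L^\infty$ bounds just obtained to absorb the quadratic nonlinearities; wave-interaction terms are handled via the well-known estimates $\|\bar u_x\|_{L^p}\lesssim (1+t)^{-1+1/p}$ and $\|\tilde{\cdot}'\|_{L^\infty_x}\lesssim\tilde\delta\mathrm{e}^{-cx}$. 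Smallness of $\bar\delta+\tilde\delta$ enters only to absorb the cross-terms produced by the superposition ansatz \eqref{hat}.

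Once $\|(\phi,\psi,\zeta)(t)\|_{H^1}^2+\int_0^t(\|\phi_x\|^2+\|(\psi_x,\zeta_x)\|_{H^1}^2)\mathrm{d}s\le C$ is established uniformly in $t$, the standard Nishida--Smoller continuation extends the local solution globally. Uniform convergence \eqref{stability2} follows from the Sobolev embedding combined with a time-integrability argument: one shows that $\frac{\mathrm{d}}{\mathrm{d}t}\|(\phi_x,\psi_x,\zeta_x)\|^2\in L^1(0,\infty)$, deduces $\|(\phi_x,\psi_x,\zeta_x)\|\to 0$, and hence $\|(\phi,\psi,\zeta)\|_{L^\infty}\to 0$; the difference $(\hat\rho-\check\rho,\hat u-\check u,\hat\theta-\check\theta)$ between the smoothed and exact asymptotic states converges uniformly to zero by the classical rarefaction-smoothing estimate applied to \eqref{Burgers}.

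The hard part is the pointwise lower bound on $\rho$ and $\theta$ under only $H^1$-smallness of the \emph{perturbation}. Unlike in Theorem \ref{thm1}, here the presence of the stationary layer modifies the coefficients near $x=0$ on an $O(1)$ spatial scale, so the Kanel argument must be performed with weights that track the exponential stationary profile; the outflow boundary contribution is crucial for closing this step and is the reason why the sign restriction $u_-\le 0$ (implicit in \eqref{Thm2a} via $-u_m>0$ and the stationary matching) cannot be relaxed. Handling this weighted Kanel bound while simultaneously absorbing the rarefaction source terms is, in my view, the key technical obstacle in the proof.
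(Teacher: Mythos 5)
Your overall architecture (relative entropy functional, decomposition of the source into stationary/rarefaction/interaction parts, smallness of $\bar\delta+\tilde\delta$ only to absorb cross-terms, exploiting the sign of the boundary flux when $u_-\le 0$, and a continuation argument) matches the paper's plan, and you correctly flag the pointwise bounds on $\rho$ and $\theta$ as the crux. However, the mechanism you propose for those bounds --- ``an equation for $\log\rho$ and the Kanel trick'' producing \emph{uniform two-sided} bounds on both $\rho$ and $\theta$ --- is not what the paper does and, for $\theta$, is the very step the paper singles out as unavailable. A weighted Kanel estimate would give $\rho$ bounds in a fixed Lagrangian domain, but the outflow problem becomes a \emph{free-boundary} problem in Lagrangian coordinates, and Kanel does not give a temperature lower bound; the paper instead localizes the $v$-equation with a cutoff $\varphi_z$ supported away from the moving boundary (Lemmas~\ref{lem_v1}--\ref{lem_boun}, modifying Jiang's representation), which yields uniform bounds on $\rho$ alone.

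The genuine gap is the temperature lower bound. The paper does \emph{not} obtain $\inf_{x,t}\theta>0$ from an a priori estimate; it obtains only a \emph{time-decaying} lower bound
\[
\inf_{\mathbb{R}_+}\theta(t,\cdot)\ge\frac{\inf_{\mathbb{R}_+}\theta(s,\cdot)}{C_3\inf_{\mathbb{R}_+}\theta(s,\cdot)(t-s)+1}
\]
via the weak maximum principle (Lemma~\ref{lem_lower2}). Uniformity in time is then restored by the \emph{continuation argument itself}: the uniform $H^1$ bound from Lemma~\ref{lem_the2} forces $\int_{T_1/2}^{T_1}\|\vartheta_x\|_{H^1}^2\,\mathrm{d}t\le C_2^2$, so on every interval of length $T_1=128\lambda_3^{-4}C_2^4$ there exists a time $t_0'$ where $\|\vartheta(t_0')\|_{L^\infty}\le\lambda_3/2$, hence $\theta(t_0',\cdot)\ge\lambda_3/2$. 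Restarting the local bound from $t_0'$ gives a uniform lower constant $C_5$ on the next window, and iterating covers all of $[0,\infty)$. This ``reset'' mechanism --- a quantitative interplay between the dissipation integral and the maximum-principle decay rate --- is the key device in the proof and is absent from your proposal. Also note that the smallness hypothesis is not simply $\bar\delta+\tilde\delta\le\epsilon_2$ applied once: because $m_1,m_2,N$ change at each step of the continuation, the condition $\Xi(m_1,m_2,N)(\bar\delta+\tilde\delta)\le\epsilon_0$ must be re-verified with the updated constants $C_1^{-1},C_4,C_5,C_2$, which is why the paper tracks these explicitly and defines $\epsilon_2=\min\{\delta_1,\dots,\delta_4\}$ at the end.
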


To derive the large-time behavior of solutions
to the compressible Navier-Stokes equations \eqref{NS_E},
it suffices to deduce certain uniformly-in-time
 \emph{a priori} estimates on the perturbations
 toward the asymptotic state
 and the essential step
is to obtain the positive lower and upper  bounds on
 the density $\rho(t,x)$ and the temperature $\theta(t,x)$
uniformly in time $t$ and space $x$.
In the case of small perturbation,
one can use the smallness of
the \emph{a priori} $H^1$-norm of the perturbation to
get the uniform bounds of the density $\rho$ and the temperature $\theta$.
Owing to such uniform bounds and the smallness of the boundary strength $\delta$,
one can derive certain uniform \textmd{a priori} energy-type estimates as shown in \cite{Q11MR2785974,KNNZMR2755498}.
In the case that the adiabatic exponent $\gamma$ is close to 1,
by observing that $
\theta=\rho^{\gamma-1}\mathrm{e}^{(\gamma-1)s/R}
$
for ideal polytropic gases \eqref{NS_E}-\eqref{ideal},
one can deduce that
$\|\theta-1\|_{L^\infty([0,T]\times\mathbb{R})}$
can be sufficiently small.
Thus the desired energy-type
a priori estimates can be performed
as in \cite{NYZMR2083790,Q11MR2785974}
based on the smallness of $\delta$ and the a priori assumption
\begin{equation*}
\tfrac 12\leq\theta(t,x)\leq 2\quad {\rm for\ all}\
(t,x)\in[0,T]\times\mathbb{R}.
\end{equation*}

However, these arguments are no longer valid for
the case with  large initial perturbation
and general adiabatic exponent.
We note that,
even for the asymptotic stability of constant state to
the Cauchy problem for the system \eqref{NS_E},
the uniform positive
lower and upper bounds on $\theta(t,x)$ are given
only very recently by  Li and Liang \cite{LL},
although the corresponding uniform  bounds on $\rho(t,x)$
were addressed in \cite{J02MR1912419,J99MR1671920} thirteen years ago.
In their work \cite{LL},
Li-Liang considered
the fixed-domain problems to the compressible Navier-Stokes equations
in the Lagrangian coordinate
and obtained the uniform positive lower and upper bounds
on the temperature $\theta(t,x)$
through a time-asymptotically nonlinear stability analysis.
However, the outflow problem \eqref{NS_E}-\eqref{bdy}
will be transformed into a free boundary problem in the Lagrangian coordinate,
which makes the treatment of boundary more difficult.
To overcome this difficulty, we shall make use of a direct
energy method to to the reformulated problem
for the compressible Navier-Stokes equations \eqref{NS_E} in the Eulerian
coordinate and take account of the dissipative effect of the boundary terms.

The main point for deriving our main results, the stability of the
rarefaction wave and its superposition
with a non-degenerate stationary solution to the
initial boundary value problem \eqref{NS_E}-\eqref{bdy},
is to employ the smallness of the boundary strength $\delta$ to
control the possible growth of the perturbation suitably.
Specifically,
we first deduce the basic energy estimate
with the aid of the decay properties
of the smoothed rarefaction wave and the non-degenerate stationary solution
 provided that
the boundary strength $\delta$ multiplied with a certain function of
$m_1$ (the \emph{a priori} lower bound of density $\rho$), $m_2$ (the
\emph{a priori} lower bound of temperature $\theta$) and $N$
(the \emph{a priori}  bound of  the $L^{\infty}(0,T;H^1(\mathbb{R}_+))$-norm
of  perturbation)
is suitably small (see Lemma \ref{lem_bas} for detailed statement).
Next, to get uniform pointwise bounds of the density $\rho(t,x)$,
 we transform the outflow problem \eqref{NS_E}-\eqref{bdy} into
a free boundary problem in the Lagrangian coordinate and
 modify Jiang's argument for fixed domains in \cite{J99MR1671920,J02MR1912419}.
Especially, we will use a cut-off function with parameter to
localize the free boundary problem, and then
we will deduce a local representation of the specific volume $v=1/\rho$ to
establish the uniform bounds of $v$.
With such uniform bounds of the density $\rho$ in hand,
we can derive the $H^1$-norm (in the spatial variable $x$) estimate of
the perturbation uniformly in the time $t$ in the Eulerian coordinate.
And the maximum principle enables us to get the
positive lower bound of the temperature
$\theta(t,x)$ locally in time $t$.
In view of the \emph{a priori} assumption \eqref{asmp1},
we have to obtain the uniform positive lower bound of the temperature $\theta(t,x)$,
which will be achieved
by combining the locally-in-time lower bound
of $\theta(t,x)$ and a well-designed continuation argument.

The layout of this paper is as follows.
After stating the notations, we summarize the
existence of the stationary solution and some
properties of the smoothed rarefaction wave in Section \ref{sec_pre}.
The basic energy estimate, the uniform bounds of the density $\rho$,
the uniform  $H^1$-norm estimate
 and the  locally-in-time lower bound of the temperature $\theta$
will be obtained in subsections \ref{sec_bas}, \ref{sec_den},
\ref{sec_norm} and \ref{sec_tem}, respectively.
The last part of this manuscript, subsection \ref{sec_proof},
 is devoted to showing the proof of our main results
by applying a well-designed continuation argument.

\bigbreak
\noindent \emph{Notations.}
Throughout this paper,
$L^q(\mathbb{R}_+)$ $(1\leq q\leq \infty)$ stands for
the usual Lebesgue space on $\mathbb{R}_+$  equipped with the norm $\|{\cdot}\|_{L^q}$
and $H^k(\mathbb{R}_+)$  $(k\in \mathbb{N})$
the usual Sobolev space in the $L^2$ sense
with norm $\|\cdot\|_k$.
We introduce $\|\cdot\|=\|\cdot\|_{L^2(\mathbb{R}_+)}$ for simplicity.
The space of  continuous
 functions on the interval $I$ with values in
$H^k(\mathbb{R}_+)$ is denoted by $C(I; H^k(\mathbb{R}_+))$
or simply by $C(I; H^k)$
while the space of $L^2$-functions
on $I$ with values in $H^k(\mathbb{R}_+)$
is denoted by $L^2(I; H^k(\mathbb{R}_+))$ or simply by
$L^2(I; H^k )$.
The Gaussian bracket $[x]$ means
the largest integer not greater than $x$,
and $x_+:=\max\{x,0\}$ is the positive part of $x$.

\section{Preliminaries}\label{sec_pre}
It is well-known (see \cite{S94MR1301779,Da10MR2574377} for example) that
for each $(t,x)\in \mathbb{R}_+\times\mathbb{R}$,
\begin{equation*}
  (\rho^R,u^R,\theta^R)(t,x)\in R_3(\rho_-,u_-,\theta_-),\quad
  \lambda_3(\rho^R,u^R,\theta^R)(t,x)=w^{R}(t,x),
\end{equation*}
where
\begin{equation*}
  \lambda_3(\rho ,u,\theta):=u+\sqrt{R\gamma\theta}
\end{equation*}
is the $3$-characteristic speed of the system \eqref{Euler} and
$w^R(t,x)$ is the continuous weak solution of the Riemann problem on
Burgers equation
\begin{equation*} 
  \left\{
  \begin{aligned}
    w^R_t+w^Rw^R_x&=0&& {\rm for}\ (t,x)\in\mathbb{R}_+\times\mathbb{R},\\
    w^R(0,x)&=w_{\pm} &&{\rm for}\ \pm x>0
  \end{aligned}
  \right.
\end{equation*}
with $w_{\pm}=\lambda_3(\rho_{\pm},u_{\pm},\theta_{\pm})$.
Moreover, $w^R(t,x)$ takes the form of
\begin{equation*}
  w^R(t,x)=
  \left\{
  \begin{aligned}
    &w_- && \quad{\rm for}\ x\leq w_-t, \\
    &{x}/{t} && \quad{\rm for}\ w_-t<x<w_+t,\\
    &w_+&& \quad{\rm for}\ x\geq w_+t.
  \end{aligned}
  \right.
\end{equation*}
The main idea in \cite{HMS03SIMAMR2000974} is to approximate $w^R(t, x)$
by the solution $w(t, x)$ of the Cauchy problem \eqref{Burgers}.
The following
lemma can be deduced
by virtue of the method of characteristics
(see \cite{MN04book,HMS03SIMAMR2000974}).
\begin{lemma}
  \label{lem_Burgers}
  Let $w_-<w_+$. Then the Burgers equation \eqref{Burgers} has
  a unique smooth solution $w(t,x)$ satisfying
  \begin{list}{}{\setlength{\parsep}{\parskip}
             \setlength{\itemsep}{0.1em}
             \setlength{\labelwidth}{2em}
             \setlength{\labelsep}{0.4em}
             \setlength{\leftmargin}{2.2em}
             \setlength{\topsep}{1mm}
             }
    \item[{\rm(i)}]  $w_x(t,x)\geq 0,$ $w_-\leq w(t,x)<w_+$
    for $(t,x)\in\mathbb{R}_+\times\mathbb{R}$;
    \item[{\rm(ii)}] when $x\leq w_-t$, $w(t,x)-w_-=w_x(t,x)=w_{xx}(t,x)=0$;
    \item[{\rm(iii)}] for each $p\in[1,\infty]$, there exists a constant $C_{p,q}$ such
    that
    \begin{align*}
      \left\|w_{x}(t)\right\|_{L^{p}(\mathbb{R})}
      &\leq C_{p,q}\min\{\tilde{w},\tilde{w}^{\frac{1}{p}}t^{-1+\frac{1}{p}}\},\\
      \left\|w_{xx}(t)\right\|_{L^{p}(\mathbb{R})}
      &\leq C_{p,q}\min\{\tilde{w},t^{-1+\frac{1}{q}(1-\frac{1}{p})} \};
    \end{align*}
    \item[{\rm(iv)}] $\lim_{t\to\infty}\sup_{x\in\mathbb{R}}
    |w(t,x)-w^R(t,x)|=0$.
  \end{list}
\end{lemma}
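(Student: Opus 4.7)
The plan is to proceed by the classical method of characteristics, which becomes particularly clean here because the initial profile $w_0(x) := w(0, x)$ is smooth, non-decreasing, and equals $w_-$ on $(-\infty, 0]$ with $w_0'(x) = \tilde{w} k_q x^q e^{-x}$ on $(0, \infty)$. First I would introduce the characteristic map $X(t, x_0) := x_0 + w_0(x_0) t$: since $w_0' \geq 0$, one has $\partial_{x_0} X = 1 + w_0'(x_0) t \geq 1$, so $X(t, \cdot)$ is a smooth diffeomorphism of $\mathbb{R}$ onto itself, and the unique classical solution of \eqref{Burgers} will be represented as $w(t, x) = w_0(x_0(t, x))$ with $x_0(t, \cdot) = X(t, \cdot)^{-1}$. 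Properties (i) and (ii) then follow immediately: $w$ takes values in $[w_-, w_+)$; monotonicity of $w_0$ together with positivity of $\partial_x x_0$ gives $w_x \geq 0$; and any characteristic through $(t, x)$ with $x \leq w_- t$ issues from some $x_0 \leq 0$, forcing $w \equiv w_-$ in that region.

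Next, for property (iii), I would compute
\begin{equation*}
  w_x(t, x) = \frac{w_0'(x_0)}{1 + w_0'(x_0) t}, \qquad
  w_{xx}(t, x) = \frac{w_0''(x_0)}{(1 + w_0'(x_0) t)^3},
\end{equation*}
and pass to the Lagrangian variable $x_0$ with Jacobian $1 + w_0'(x_0) t$ to obtain
\begin{equation*}
  \|w_x(t)\|_{L^p}^p = \int_{\mathbb{R}} \frac{(w_0'(x_0))^p}{(1 + w_0'(x_0) t)^{p-1}}\, \mathrm{d}x_0, \qquad
  \|w_{xx}(t)\|_{L^p}^p = \int_{\mathbb{R}} \frac{|w_0''(x_0)|^p}{(1 + w_0'(x_0) t)^{3p-1}}\, \mathrm{d}x_0.
\end{equation*}
The bound on $w_x$ will be obtained by splitting the domain at the threshold $w_0'(x_0) t = 1$ and combining $\|w_0'\|_{L^1} = \tilde{w}$ with $\|w_0'\|_{L^\infty} \leq C_q \tilde{w}$, which produces both alternatives $\tilde{w}$ and $\tilde{w}^{1/p} t^{-1 + 1/p}$. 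The hard part is the $w_{xx}$ bound: since $|w_0''(x_0)| \lesssim \tilde{w}(x_0^{q-1} + x_0^q) e^{-x_0}$, the decay in $t$ is governed by the rate at which the tail of $w_0'$ drops below $1/t$. Inverting $x_0^q e^{-x_0} \sim 1/(\tilde{w} t)$ reveals the critical scale $x_0 \sim \log t$, and a careful split of the integral at this scale while tracking the polynomial prefactor of exponent $q$ will yield the algebraic rate $t^{-1 + (1 - 1/p)/q}$. This is precisely where the fixed parameter $q$ enters the final bound, and it is the step I expect to be most delicate.

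Finally for (iv), inverting $X(t, x_0) = x$ on the fan $w_- t < x < w_+ t$ and writing $c := x/t \in (w_-, w_+)$ reduces matters to the identity $x_0/t + w_0(x_0) = c$; as $t \to \infty$ the positive preimage $x_0$ remains bounded, so $w_0(x_0) \to c$ and hence $w(t, x) \to c = w^R(t, x)$. Combined with the exact identity $w \equiv w_-$ on $\{x \leq w_- t\}$ and the exponential closeness of $w_0$ to $w_+$ for large $x_0$, which handles the transition near $x \approx w_+ t$, this will deliver the uniform convergence stated in (iv).
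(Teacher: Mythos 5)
Your characteristics‐based argument is exactly the one the paper invokes (the paper supplies no proof of its own but refers to \cite{MN04book,HMS03SIMAMR2000974}), and the representation formulas $w_x=w_0'(x_0)/(1+w_0'(x_0)t)$, $w_{xx}=w_0''(x_0)/(1+w_0'(x_0)t)^3$, the Lagrangian change of variables, and the argument for (iv) are all correct. One small correction to your heuristic in (iii): the rate $t^{-1+(1-1/p)/q}$ for $\|w_{xx}\|_{L^p}$ is produced by the \emph{left} edge of the ramp, where $w_0'(x_0)\sim\tilde w\,k_q x_0^{q}$ vanishes algebraically (so the relevant scale is $x_0\sim(\tilde w t)^{-1/q}$), whereas the exponential tail near $x_0\sim\log t$ contributes only $O\big((\log t)^{qp}t^{-p}\big)$ to $\|w_{xx}(t)\|_{L^p}^p$, which is of lower order; the split should be carried out at the level set $\{w_0'(x_0)t=1\}$, and it is the $x_0^{q}$ degeneracy at the origin that yields the $1/q$ in the exponent.
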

Having obtained $w(t, x)$,
we can define the smoothed rarefaction wave  $(\bar{\rho},\bar{u},\bar{\theta})$
according to \eqref{SRW}. Then one can check from a direct calculation that
$(\bar{\rho},\bar{u},\bar{\theta})$ solves
the compressible Euler system \eqref{Euler} and
\begin{equation}\label{SRW_p1}
  (\bar{\rho},\bar{u},\bar{\theta})(0,t)=
  ({\rho}_-,{u}_-,{\theta}_-),\quad
  \lim_{x\to\infty}(\bar{\rho},\bar{u},\bar{\theta})(t,x)
  =({\rho}_+,{u}_+,{\theta}_+)\quad {\rm for\ each}\ t\geq 0.
\end{equation}
In view of \eqref{SRW} and
Lemma \ref{lem_Burgers}, we have the following properties for
the smoothed rarefaction wave $(\bar{\rho},\bar{u},\bar{\theta})$.
\begin{lemma}
  \label{lem_SRW}
  The smooth approximation  $(\bar{\rho},\bar{u},\bar{\theta})$
connecting $(\rho_{-},u_{-},\theta_{-})$ and
 $(\rho_{+},u_{+},\theta_{+})$ satisfies
  \begin{list}{}{\setlength{\parsep}{\parskip}
             \setlength{\itemsep}{0.1em}
             \setlength{\labelwidth}{2em}
             \setlength{\labelsep}{0.4em}
             \setlength{\leftmargin}{2.2em}
             \setlength{\topsep}{1mm}
             }
    \item[{\rm(i)}] $\rho_-\leq \bar\rho\leq \rho_+,$ $\theta_-\leq \bar\theta
    \leq \theta_+,$
    $\bar{\theta}_x(t,x)\geq 0,$ and
       \begin{equation}\label{SRW_p2}
         \bar{\rho}_x=\frac{1}{\gamma-1}\bar{\rho}\bar{\theta}^{-1}\bar{\theta}_x,\quad
                     \bar{u}_x= \frac{\sqrt{R\gamma  }}{\gamma-1}\bar{\theta}^{-\frac{1}{2}}
                     \bar{\theta}_x;
       \end{equation}
    \item[{\rm(ii)}] if $x\leq (u_-+\sqrt{R\gamma\theta_-}
    )(1+t)$, then $(\bar{\rho},\bar{u},\bar{\theta})(t,x)=(\rho_{-},u_{-},\theta_{-})$;
    \item[{\rm(iii)}] for each $p\in[1,\infty]$, there exists a constant $C_{p,q}$ such
    that
    \begin{align}\label{SRW_p3}
      \left\|(\bar{\rho}_{x},\bar{u}_{x},\bar{\theta}_{x})(t)\right\|
      _{L^{p}(\mathbb{R}_+)}
      &\leq C_{p,q}\min \{\delta,\delta^{\frac{1}{p}}(1+t)^{-1+\frac{1}{p}} \},
      \\ \label{SRW_p4}
      \left\|(\bar{\rho}_{xx},\bar{u}_{xx},\bar{\theta}_{xx})(t)\right\|
      _{L^{p}(\mathbb{R}_+)}
      &\leq C_{p,q}\min \{\delta,(1+t)^{-1+\frac{1}{q}(1-\frac{1}{p})}\},
    \end{align}
    where
    $\delta:=\left|(u_+-u_-,\theta_+-\theta_-)\right|$ is
    the boundary strength;
    \item[{\rm(iv)}] $\lim_{t\to\infty}\sup_{x\in\mathbb{R}_+}
    |(\bar{\rho},\bar{u},\bar{\theta})(t,x)-({\rho}^R,{u}^R,{\theta}^R)(1+t,x)|=0$.
  \end{list}
\end{lemma}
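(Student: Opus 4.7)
The plan is to derive every property of $(\bar\rho,\bar u,\bar\theta)$ from the defining relations in \eqref{SRW} together with the properties of $w(t,x)$ already established in Lemma \ref{lem_Burgers}. Observe that the last two equations of \eqref{SRW} express $\bar\rho$ and $\bar u$ as smooth, strictly monotone functions of $\bar\theta$ alone (via the isentropic constraint and the $3$-Riemann invariant), and the first equation says
\[
\bar u(t,x)+\sqrt{R\gamma\,\bar\theta(t,x)}=w(1+t,x).
\]
This gives a one-to-one correspondence between $\bar\theta$ and $w$ on the range $[w_-,w_+)$. Differentiating the isentropic identity $\bar\rho^{1-\gamma}\bar\theta=\rho_+^{1-\gamma}\theta_+$ and the integral formula for $\bar u$ with respect to $x$ yields at once the two algebraic identities \eqref{SRW_p2}, and inserting them into $\partial_x[\bar u+\sqrt{R\gamma\bar\theta}]=w_x$ expresses each of $\bar\theta_x,\bar u_x,\bar\rho_x$ as a smooth, positive multiple (uniformly bounded above and below thanks to $\bar\theta\in[\theta_-,\theta_+]$) of $w_x(1+t,\cdot)$. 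In particular $\bar\theta_x\geq 0$ follows from $w_x\geq 0$, which in turn gives $\bar\rho_x,\bar u_x\geq 0$; the uniform pointwise bounds $\rho_-\leq\bar\rho\leq\rho_+$ and $\theta_-\leq\bar\theta\leq\theta_+$ follow by integrating these identities in $x$ from $-\infty$ to $+\infty$ (using $w\in[w_-,w_+)$ and the boundary values \eqref{SRW_p1}).

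For part (ii), note $w_-=u_-+\sqrt{R\gamma\theta_-}$, so Lemma \ref{lem_Burgers}(ii) gives $w(1+t,x)=w_-$ whenever $x\leq w_-(1+t)$. By the one-to-one correspondence above, this forces $\bar\theta=\theta_-$, hence $\bar\rho=\rho_-$ and $\bar u=u_-$, in that region. Part (iv) is similarly immediate: the map $w\mapsto(\bar\rho,\bar u,\bar\theta)$ is continuous and sends $w^R(1+t,x)$ to $(\rho^R,u^R,\theta^R)(1+t,x)$ by construction, so the uniform convergence $w(1+t,\cdot)\to w^R(1+t,\cdot)$ from Lemma \ref{lem_Burgers}(iv) transfers to $(\bar\rho,\bar u,\bar\theta)$.

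The only quantitative point is (iii). From the identities just derived there is a constant $C$, depending only on $\gamma,R,\theta_\pm$, such that
\[
|(\bar\rho_x,\bar u_x,\bar\theta_x)(t,x)|\leq C|w_x(1+t,x)|.
\]
Differentiating once more, $\bar\theta_{xx}$ and $\bar u_{xx}$ are linear combinations of $w_{xx}(1+t,\cdot)$ and $w_x(1+t,\cdot)^2$ with bounded coefficients; in the regime $w_x\lesssim\delta$ the square term is absorbed, so
\[
|(\bar\rho_{xx},\bar u_{xx},\bar\theta_{xx})(t,x)|\leq C\bigl(|w_{xx}(1+t,x)|+|w_x(1+t,x)|^2\bigr).
\]
Taking the $L^p(\mathbb{R}_+)$-norm and invoking the two estimates of Lemma \ref{lem_Burgers}(iii) with $t$ replaced by $1+t$ (together with $\tilde w=w_+-w_-\lesssim\delta$, which follows from a short computation using \eqref{Thm1a}) yields \eqref{SRW_p3} and \eqref{SRW_p4}. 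The only mildly delicate step is handling the square term $w_x^2$ in the $L^p$-bound for the second derivatives, which is controlled by interpolation: $\|w_x^2\|_{L^p}=\|w_x\|_{L^{2p}}^2$ is dominated by the right-hand side of \eqref{SRW_p4} thanks to the $L^{2p}$-version of the first bound in Lemma \ref{lem_Burgers}(iii) and the assumption $q\geq 16$, which guarantees the exponent of $(1+t)$ is no worse than the claimed one.
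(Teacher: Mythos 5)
Your proof is correct and follows the route the paper intends (the paper states Lemma~\ref{lem_SRW} as an immediate consequence of the definition \eqref{SRW} and Lemma~\ref{lem_Burgers} without spelling out the details): differentiating the isentropic constraint and the Riemann-invariant formula gives \eqref{SRW_p2}, expressing $\bar\theta_x$ as a bounded positive multiple of $w_x(1+t,\cdot)$ then yields (i)--(iv) from the corresponding properties of $w$. One small inaccuracy: you attribute the exponent comparison in the $w_x^2$ term to the hypothesis $q\ge 16$, but in fact $-2+\tfrac1p\le -1+\tfrac1q(1-\tfrac1p)$ reduces to $-1\le\tfrac1q$, which holds for every $q>0$; the restriction $q\ge16$ plays no role in this lemma.
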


Next
we state the existence and the properties of the stationary solution
$(\tilde{\rho},\tilde{u},\tilde{\theta})$
satisfying \eqref{BL1} and \eqref{BL2}, which has been derived in
\cite{KNNZMR2755498}. To this end,
we introduce the Mach number at infinity as
$$
M_m:=\frac{|u_m|}{c_m},
$$
where $c_m:=\sqrt{R\gamma\theta_m}$ is the sound speed.
\begin{lemma}[\cite{K82MR651877}]
  Suppose that $(u_-,\theta_-)$ satisfies
  \begin{equation*}
   (u_-,\theta_-)\in \mathcal{M}^m:=\left\{(u,\theta)\in\mathbb{R}^2:
    \left|(u-u_m,\theta-\theta_m)\right|<\delta_0\right\}
  \end{equation*}
  for a certain positive constant $\delta_0$.
\begin{list}{}{\setlength{\parsep}{\parskip}
             \setlength{\itemsep}{\parskip}
             \setlength{\labelwidth}{2em}
             \setlength{\labelsep}{0.4em}
             \setlength{\leftmargin}{2.2em}
             \setlength{\topsep}{1mm}
             }
    \item[{\rm(i)}] For the case $M_m>1$, there exists a unique smooth solution
  $(\tilde{\rho},\tilde{u},\tilde{\theta})$ to the problem \eqref{BL1}-\eqref{BL2}
  satisfying the decay estimate \eqref{decay1}.
  \item[{\rm(ii)}] For the case $M_m=1$,
   there exists a certain region $\mathcal{M}^0\subset \mathcal{M}^m$
   such that if $(u_-,\theta_-)\in \mathcal{M}^0$, then there exists
   a unique smooth solution $(\tilde{\rho},\tilde{u},\tilde{\theta})$
to \eqref{BL1}-\eqref{BL2}
satisfying
  \begin{equation*} 
   |\partial_x^n(\tilde{\rho}-\rho_m,\tilde{u}-u_m,\tilde{\theta}-\theta_m)(x)|
   \leq \frac{C\tilde{\delta}^{n+1}}{(1+\tilde{\delta} x)^{k+1}}
   +C\tilde{\delta} \mathrm{e}^{-cx} \quad {\rm for\ all}\ n\in\mathbb{N}.
  \end{equation*}
  \item[{\rm(iii)}] For the case $M_m<1$, there exists a curve $\mathcal{M}^-\subset
  \mathcal{M}^m$
  such that if
  $
   (u_-,\theta_-)\in \mathcal{M}^m,
  $
  then there exists a unique smooth solution $(\tilde{\rho},\tilde{u},\tilde{\theta})$
to the problem \eqref{BL1}-\eqref{BL2}
  satisfying \eqref{decay1}.
\end{list}
\end{lemma}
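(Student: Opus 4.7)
\bigbreak
\noindent\emph{Proof sketch.}

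My plan is to reduce the boundary value problem \eqref{BL1}--\eqref{BL2} to a planar autonomous ODE on $[0,\infty)$ and then analyze the local phase portrait at the equilibrium $(u_m,\theta_m)$ via invariant manifold theory, with the Mach number $M_m$ deciding which of the three cases applies.

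The first step is to integrate the continuity equation in \eqref{BL1} to obtain $\tilde{\rho}\tilde{u}\equiv\rho_m u_m=:j$, so that $\tilde{\rho}=j/\tilde{u}$. Substituting this into the momentum and energy equations, integrating once on $[x,\infty)$ and using the far-field conditions in \eqref{BL2} to eliminate the constants of integration, one is left with a first-order planar system
\begin{equation*}
\mu\tilde{u}'=F(\tilde{u},\tilde{\theta}),\qquad \kappa\tilde{\theta}'=G(\tilde{u},\tilde{\theta}),
\end{equation*}
where $F$ and $G$ are smooth and vanish at $(u_m,\theta_m)$. A direct calculation of the Jacobian $J_m$ of $(F/\mu,G/\kappa)$ at this equilibrium shows that $\det J_m$ changes sign at $M_m=1$ (positive for $M_m>1$, negative for $M_m<1$) while $\mathrm{tr}\,J_m<0$ throughout the regime of interest. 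Hence $J_m$ has two eigenvalues with negative real parts when $M_m>1$, one negative and one positive eigenvalue when $M_m<1$, and one negative and one zero eigenvalue when $M_m=1$.

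The three cases then correspond to three standard scenarios. In case (i), $(u_m,\theta_m)$ is an asymptotically stable node, so a full neighborhood $\mathcal{M}^m$ lies in its basin; for any $(u_-,\theta_-)\in\mathcal{M}^m$ with $\delta_0$ small, the unique orbit starting there converges exponentially to $(u_m,\theta_m)$, yielding \eqref{decay1} at $n=0$, and differentiating the ODE bootstraps the exponential bound to all $n\in\mathbb{N}$. In case (iii), the local stable manifold of the saddle is a smooth one-dimensional curve, which is precisely the admissible set $\mathcal{M}^-\subset\mathcal{M}^m$, and orbits along it decay exponentially. In case (ii), I would invoke the center manifold theorem to split the flow into an exponentially contracting stable direction and a one-dimensional center direction; reducing to the center manifold yields a scalar ODE whose leading nonlinear term has a computable sign, producing algebraic decay of the order $(1+\tilde\delta x)^{-(k+1)}$, while the distance to the center manifold accounts for the residual exponential term in the stated estimate; $\mathcal{M}^0$ is then characterized as the subset of $\mathcal{M}^m$ on which the reduced one-dimensional flow drives the orbit toward, rather than away from, the equilibrium.

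The hardest part will be the sonic case (ii): the non-hyperbolicity forces a center manifold reduction in place of the usual stable manifold theorem, and one has to explicitly compute the leading nonlinear coefficient in the reduced one-dimensional flow in order to verify that its sign produces the claimed algebraic rate and to carve out $\mathcal{M}^0$ correctly. By comparison the subsonic case (iii) is routine, but still requires checking that $\mathcal{M}^-$ is a smooth graph parameterized so that $\tilde\delta$ faithfully measures the size of the perturbation consistent with \eqref{decay1}.
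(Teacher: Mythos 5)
The paper does not prove this lemma: it is quoted directly from the cited work \cite{K82MR651877} (and \cite{KNNZMR2755498}), so there is no in-paper proof to compare against. Your dynamical-systems outline---using the conserved mass flux $j=\rho_m u_m$ to reduce \eqref{BL1}--\eqref{BL2} to an autonomous planar system in $(\tilde{u},\tilde{\theta})$, linearizing at $(u_m,\theta_m)$, and sorting the three cases by the sign of $\det J_m$ relative to $M_m=1$ via stable/unstable/center manifold theory---is in fact the method used in those references, so the strategy is the same as the external source.

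A few points are worth tightening before the sketch could stand as a proof. The assertion $\mathrm{tr}\,J_m<0$ is the linchpin of case (i), and it is not automatic: it hinges on $j=\rho_m u_m<0$, i.e., on the outflow condition $u_m<0$ (enforced here through \eqref{Thm2a} and the standing assumption $u_-<0$); for the inflow problem the sign reverses and the supersonic equilibrium becomes an unstable node, so the trace sign must be made explicit rather than folded into ``the regime of interest.'' To get a node rather than a spiral in case (i) one must also verify $(\mathrm{tr}\,J_m)^2\geq 4\det J_m$; exponential decay holds either way, but the monotone profiles customary in the stationary-wave literature (and the uniqueness assertion) require ruling out spiralling, and the references check this discriminant. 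Finally, in case (ii) the exponent $k+1$ in the stated bound is a typo in the paper for $n+1$: the center-manifold reduction produces a scalar ODE with nondegenerate quadratic leading term, giving $\tilde{\theta}-\theta_m\sim\tilde\delta/(1+\tilde\delta x)$, and each spatial derivative contributes an additional factor $\tilde\delta/(1+\tilde\delta x)$, yielding $|\partial_x^n(\cdot)|\lesssim \tilde\delta^{n+1}(1+\tilde\delta x)^{-(n+1)}$ plus an exponentially small contribution from the transverse (stable) direction; your center-manifold sketch recovers exactly this once the leading coefficient is computed and its sign is used to carve out the admissible half-region $\mathcal{M}^0$.
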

Since the stationary solution $(\tilde{\rho},\tilde{u},\tilde{\theta})$
and the smoothed rarefaction wave $(\bar{\rho},\bar{u},\bar{\theta})$
are well-defined,
one can deduce that $(\hat\rho,\hat{u},\hat\theta)$ satisfies
\begin{equation}\label{hat1}\left\{
  \begin{aligned}
    \hat{\rho}_t+\hat{u}\hat{\rho}_x+\hat{\rho}\hat{u}_x&=\hat{f_1},\\
    \hat{\rho}(\hat{u}_t+\hat{u} \hat{u}_x)+\hat{P}_x&=\mu\tilde{u}_{xx}+
    \hat{f_2},\\
    c_v\hat{\rho}(\hat{\theta}_t+\hat{u} \hat{\theta}_x)+\hat{P}\hat{u}_x&
    =\kappa\tilde{\theta}_{xx}+\mu\tilde{u}_x^2+\hat{f_3}
  \end{aligned}\right.
\end{equation}
for $(t,x)\in\mathbb{R}_+\times\mathbb{R}$ and the condition
\begin{equation}\label{hat2}
  (\hat\rho,\hat{u},\hat\theta)(t,0)=( {\rho}_-, {u}_-, {\theta}_-),\quad
  \lim_{x\to\infty}(\hat\rho,\hat{u},\hat\theta)(t,x)=( {\rho}_+, {u}_+, {\theta}_+)
  \quad {\rm for\ each}\ t\geq 0.
\end{equation}
Here $\hat{P}:=P(\hat{\rho},\hat{\theta})=R\hat{\rho}\hat{\theta}$ and
\begin{align}
\label{f1_hat}
  \hat{f_1}=~&\tilde{u}_{x}(\bar{\rho}-\rho_m)+\bar{u}_{x}(\tilde{\rho}-\rho_m)
  +\tilde{\rho}_{x}(\bar{u}-u_m)+\bar{\rho}_{x}(\tilde{u}-u_m),\\
  \hat{f_2}=~&\hat{\rho}[\tilde{u}_{x}(\bar{u}-u_m)+\bar{u}_{x}(\tilde{u}-u_m)]
  +\tilde{u}\tilde{u}_{x}(\bar{\rho}-\rho_m)\nonumber\\  \label{f2_hat}
  &+ (\hat{P}-\tilde{P}-\bar{P})_x
  -\bar{\rho}^{-1}(\tilde{\rho}-\rho_m)\bar{P}_x,\\
  \hat{f_3}=~&c_v\hat{\rho}[\tilde{\theta}_{x}(\bar{u}-u_m)+\bar{\theta}_{x}(\tilde{u}-u_m)]
  +c_v(\bar{\rho}-\rho_m)\tilde{u}\tilde{\theta}_{x}\nonumber\\ \label{f3_hat}
  &+\tilde{u}_x(\hat{P}-\tilde{P} )+(\hat{P} -\bar{P})\bar{u}_x
  -R\bar{\theta}(\tilde{\rho}-\rho_m)\bar{u}_x.
\end{align}

\section{Stability analysis}
This section is devoted to proving our main results: Theorem  \ref{thm1} and
Theorem \ref{thm2}. We will concentrate on the proof of Theorem \ref{thm2},
that is, the stability of the composition of a rarefaction wave and
a non-degenerate stationary solution. The proof of Theorem \ref{thm1}
is similar to and simpler than that of Theorem \ref{thm2}.
We therefore omit it here for brevity.

First we introduce the perturbation $(\phi,\psi,\vartheta)$
toward the superposition wave $(\hat{\rho},\hat{u},\hat{\theta})$ as
\begin{equation*}
  (\phi, \psi, \vartheta)(t,x):=(\rho, u, \theta)(t,x)-(\hat{\rho},
  \hat{u}, \hat{\theta})(t,x),
\end{equation*}
where  $(\hat{\rho},\hat{u},\hat{\theta})$ is given by \eqref{hat}.
Then we subtract \eqref{hat1}-\eqref{hat2} from \eqref{NS_E}-\eqref{bdy}
to have the initial boundary value problem:
\begin{equation}\label{per1}
\left\{
  \begin{aligned}
    \phi_t+u\phi_x+\rho\psi_x&=f_1,\\[0.5mm]
    \rho(\psi_t+u\psi_x)+(P-\hat{P})_x&=\mu\psi_{xx}+f_2,\\[0.5mm]
    c_v\rho(\vartheta_t+u\vartheta_x)+P\psi_x
    &=\kappa\vartheta_{xx}+\mu\psi_x^2+f_3
  \end{aligned}\right.
\end{equation}
for $(t,x)\in\mathbb{R}_{+}\times\mathbb{R}_{+}$ with the initial
and boundary conditions
\begin{equation}\label{per2}
    (\phi, \psi, \vartheta)|_{t=0}=(\phi_0, \psi_0, \vartheta_0),\quad
    (\psi, \vartheta)|_{x=0}=(0,0).
\end{equation}
Here the initial condition
$(\phi_0, \psi_0, \vartheta_0)
    :=(\rho_0,u_0,\theta_0)-(\hat{\rho},\hat{u},\hat{\theta})|_{t=0}$
    satisfies
\begin{equation}\label{per3}
  \lim_{x\to\infty}(\phi_0, \psi_0, \vartheta_0)(x)
    =(0,0,0),
\end{equation}
and
\begin{align}\label{f1}
  f_1 =~&
  -\hat{u}_x\phi-\hat{\rho}_x\psi-\hat{f_1},  \\
  \label{f2}
  f_2 =~  &
  \mu\bar{u}_{xx}-\mu\hat{\rho}^{-1}\phi\tilde{u}_{xx}
  +\hat{\rho}^{-1}\phi  \hat{P}_x-\rho\psi\hat{u}_x-\hat{\rho}^{-1}\rho \hat{f_2},\\
  f_3 =~  &
  \kappa\bar{\theta}_{xx}-\hat{\rho}^{-1}\phi (\kappa\tilde{\theta}_{xx}+
  \mu\tilde{u}_{x}^2)+\mu\bar{u}_x^2\nonumber\\ \label{f3}
  &+2\mu\psi_x\hat{u}_x+2\mu\tilde{u}_x\bar{u}_x
  -R\rho\vartheta\hat{u}_x-c_v\rho\hat{\theta}_x\psi-\hat{\rho}^{-1}\rho \hat{f_3},
\end{align}
where $\hat{f_i}$ $(i=1,2,3)$ are defined by \eqref{f1_hat}-\eqref{f3_hat},
respectively.

We turn to deduce some desired a priori estimates for the
perturbation $(\phi,\psi,\vartheta)$ in the Sobolev space $H^1$.
Before doing so,
for some non-negative constants $N$, $s$, $t$ and  $m_i$ $(i=1,2)$ with $t\geq s$,
we introduce
the set in which
we seek the solution of the initial boundary value problem
\eqref{per1}-\eqref{per2} as follows
\begin{equation*}
  \begin{split}
     X(s,t ;m_1,m_2,N):=\big\{(\phi,\psi,\vartheta)
      \in C([s,t ];H^1):
     (\psi_x,\vartheta_x)\in L^2(s,t ;H^1),
     \phi_x\in L^2(s,t ;L^2),\\
          \|(\phi,\psi,\vartheta)(t)\|_1\leq N,\
     (\phi+\hat{\rho})(t,x)\geq m_1,\
     (\vartheta+\hat{\theta})(t,x)\geq m_2\ \forall\, (t,x)\in[s,t ]\times\mathbb{R}_+
     \big\}.
  \end{split}
\end{equation*}
The letter $C$ or $C_i$ $(i\in\mathbb{N})$ will be employed to denote
some positive constant which depends only on
$\inf_{x\in\mathbb{R}_+}\left\{\rho_0(x),\theta_0(x)\right\}$
  and $\|(\phi_0,\psi_0,\vartheta_0)\|_1$.
The exact value denoted by $C$ or $C_i$ may therefore vary from line to line.
  For notational simplicity, we introduce
  $A\lesssim B$ if $A\leq C B$ holds uniformly for some constant  $C$.
  The notation $A\sim B$ means that both $A\lesssim B$  and $B\lesssim A$.
  Besides, we will use the notation $(\rho, \theta)=(\phi+\hat{\rho},
  \vartheta+\hat{\theta})$.

 To make the presentation clearly, we divide this section into five parts.
 The first four parts concern  the a priori estimates for
 the solution $(\phi,\psi,\vartheta)\in X(0,T;m_1,m_2,N)$ to
 the problem \eqref{per1}-\eqref{per2}, where
 $T>0$ and it will be assumed that
 $m_i\leq 1\leq N$ ($i=1,2$) so that
  \begin{equation}\label{apriori}
    \|(\phi,\psi,\vartheta)(t)\|_1\leq N,\ \
    m_1\leq {\rho}(t,x)\lesssim N,\ \
  m_2\leq   {\theta}(t,x)\lesssim N \ \ {\rm for\ all}\
  (t,x)\in[0,T]\times\mathbb{R}_+.
  \end{equation}
In Subsection \ref{sec_proof},
the last part of this section, we will combine
the energy estimates with a well-designed
continuation argument to prove Theorem \ref{thm2}.

\subsection{Basic energy estimate} \label{sec_bas}
In this part, we will show the following basic energy estimate.
\begin{lemma}\label{lem_bas}
Suppose that the conditions listed in Theorem \ref{thm2} hold.
Then there exists a sufficiently small $\epsilon_0>0$ such that if
 \begin{equation}\label{asmp1}
   \Xi(m_1, m_2,N) (
   \bar{\delta}+\tilde{\delta})
   \leq \epsilon_0
 \end{equation}
  with $\Xi(m_1, m_2 ,N):=  m_1^{-50}m_2^{-50}N^{50},$
 then
 \begin{align}
    \label{est1}
    \sup_{0\leq t\leq T}\int_{\mathbb{R}_+}\rho \mathcal{E}\mathrm{d}x
    +\int_0^T\rho\Phi\left(\frac{\hat{\rho}}{\rho}\right)(t,0)\mathrm{d}t
    +\int_0^T\int_{\mathbb{R}_+}\left[\frac{\psi_x^2}{\theta}+
    \frac{\vartheta_x^2}{\theta^2}\right]\mathrm{d}x\mathrm{d}t
    &\lesssim 1,\\
    \label{est2}
    \sup_{0\leq t\leq T}\int_{\mathbb{R}_+}\frac{\phi_x^2}{\rho^3}\mathrm{d}x
    +\int_0^T\frac{\phi_x^2}{\rho^3}(t,0)
    \mathrm{d}t
    +\int_0^T\int_{\mathbb{R}_+}\frac{\theta\phi_x^2}{\rho^2}
    \mathrm{d}x\mathrm{d}t
    &\lesssim N,
  \end{align}
  where
  \begin{equation}\label{E}
      \mathcal{E}:=R\hat{\theta}\Phi\left(\frac{\hat{\rho}}{\rho}\right)
      +\frac12\psi^2 +c_v\hat{\theta}\Phi\left(\frac{\theta}{\hat{\theta}}\right),
  \quad \Phi(z):= z-\ln z-1.
  \end{equation}
\end{lemma}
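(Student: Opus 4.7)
The plan is to run a relative-entropy argument in the spirit of Kawashima--Matsumura, supplemented by a Kanel'-type manipulation for the density-gradient bound \eqref{est2}. Throughout, the smallness assumption \eqref{asmp1} is what absorbs all nonlinear losses that pick up powers of $m_1^{-1}$, $m_2^{-1}$, or $N$.

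For \eqref{est1} I would multiply the three equations of \eqref{per1} respectively by $R\hat\theta(\rho^{-1}-\hat\rho^{-1})$, by $\psi$, and by $(\theta-\hat\theta)/\theta$, and sum, using the profile identities \eqref{hat1} to recombine terms. This produces an identity of the form
\begin{equation*}
  (\rho\mathcal{E})_t + \mathcal F_x + \mu\frac{\psi_x^2}{\theta} + \kappa\frac{\vartheta_x^2}{\theta^2} = \mathcal Q,
\end{equation*}
where $\mathcal F$ is the relative-entropy flux and $\mathcal Q$ collects every remaining contribution: pieces linear in the profile derivatives $(\bar\rho,\bar u,\bar\theta)_x$ and $(\tilde\rho,\tilde u,\tilde\theta)_x$, the forcings $\hat f_1,\hat f_2,\hat f_3$ of \eqref{f1_hat}--\eqref{f3_hat}, and mixed quadratic-in-perturbation terms. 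Integrating on $[0,T]\times\mathbb{R}_+$, the boundary flux $\mathcal F(t,0)$ simplifies: $\psi(t,0)=\vartheta(t,0)=0$ and $\hat\theta(t,0)=\theta_-$ kill the viscous and heat-flux pieces, leaving only the convective contribution $u_-R\theta_-\rho\Phi(\hat\rho/\rho)(t,0)$. Since $u_-\leq 0$ and $\Phi\geq 0$, $-\int_0^T \mathcal F(t,0)\,dt$ supplies exactly the boundary dissipation appearing in \eqref{est1}.

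To dispose of $\mathcal Q$, split it into rarefaction and stationary contributions. The rarefaction contribution contains the structurally favorable term $\bar u_x\bigl[R\hat\theta\Phi(\hat\rho/\rho)+c_v\hat\theta\Phi(\theta/\hat\theta)\bigr]\geq 0$ (by Lemma \ref{lem_SRW}(i)), which is simply discarded. All other pieces are controlled by Cauchy--Schwarz and Young's inequality: derivative losses are absorbed into $\mu\psi_x^2/\theta+\kappa\vartheta_x^2/\theta^2$, while zeroth-order contributions yield time integrals of the form $\int_0^T\|\bar u_x\|_{L^\infty}^2\|(\phi,\psi,\vartheta)\|^2\,dt$ plus analogous integrals with $\mathrm{e}^{-cx}$ weights, controlled via Lemma \ref{lem_SRW}(iii) (giving $\|\bar u_x\|_{L^\infty}^2\lesssim\min\{\bar\delta^2,(1+t)^{-2}\}\in L^1_t$), the exponential decay \eqref{decay1}, and the a priori bound \eqref{apriori}. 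Every resulting prefactor has the shape $m_1^{-k_1}m_2^{-k_2}N^{k_3}(\bar\delta+\tilde\delta)$, and the exponent $50$ in $\Xi(m_1,m_2,N)$ is comfortably large to cover all such losses; a Gronwall step then closes \eqref{est1}.

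Estimate \eqref{est2} is obtained by a Kanel'-type combination: differentiating continuity to write $\phi_{xt}$ in terms of $(u\phi_x+\rho\psi_x)_x-f_{1x}$, multiplying by $\phi_x/\rho^3$, and combining with the momentum equation tested so that the $R\theta\phi_x/\rho$ piece from $(P-\hat P)_x$ meets $\phi_x/\rho^2$, one reaches an inequality of the form
\begin{equation*}
  \tfrac{1}{2}\tfrac{d}{dt}\!\int_{\mathbb{R}_+}\!\frac{\phi_x^2}{\rho^3}\,dx + C\!\int_{\mathbb{R}_+}\!\frac{\theta\phi_x^2}{\rho^2}\,dx + |u_-|\,\frac{\phi_x^2(t,0)}{\rho^3(t,0)} \lesssim \mathrm{(lower\ order)},
\end{equation*}
where the boundary term comes from integration by parts of the convective flux and is nonnegative thanks to $u_-\leq 0$. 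The remainder, made of $\psi_x\phi_x$, $\vartheta_x\phi_x$, and products with profile derivatives, is absorbed by Young, the dissipation already furnished by \eqref{est1}, and \eqref{apriori}; the factor $N$ on the right-hand side of \eqref{est2} arises from bounding $\|(\psi_t,\phi)\|$-type pieces against the $H^1$ a priori norm. The main obstacle is the bookkeeping inside $\mathcal Q$: every division by $\rho$ or $\theta$ costs a power of $m_1^{-1}$ or $m_2^{-1}$ and every interpolation costs a power of $N$, so one must systematically pair each loss either with the parabolic dissipation or with an $L^1_t$-integrable profile norm. A secondary subtlety is that $\|\bar u_x\|_{L^2_x}^2\sim(1+t)^{-1}$ is only marginally non-integrable in time, so wherever it appears one must exploit $\bar u_x\geq 0$ or pass to the $L^\infty_x$ bound, which does lie in $L^1_t$.
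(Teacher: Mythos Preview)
Your overall architecture---relative entropy for \eqref{est1}, Kanel' combination for \eqref{est2}, closing the two together under \eqref{asmp1}---matches the paper. But there is a real gap in your treatment of the rarefaction remainder, and it is exactly the piece you dismiss as routine.

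When you carry out the entropy computation, the rarefaction contribution to $\mathcal Q$ contains not only the sign-favorable term
\[
-\rho\bar u_x\Bigl[\tfrac{R^2\hat\theta}{c_v}\Phi\bigl(\tfrac{\hat\rho}{\rho}\bigr)+\psi^2+R\hat\theta\Phi\bigl(\tfrac{\theta}{\hat\theta}\bigr)\Bigr]\le 0,
\]
but also genuine \emph{quadratic} cross terms of the type $R\psi\bar\theta_x\phi$ and $-c_v\hat\theta^{-1}\rho\psi\bar\theta_x\vartheta$. These cannot be handled by Cauchy--Schwarz/Young as you propose: they carry a single factor of $\bar\theta_x$, and $\|\bar\theta_x(t)\|_{L^\infty}\lesssim(1+t)^{-1}$ is \emph{not} $L^1_t$-integrable (only its square is---your last sentence conflates the two). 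There is no dissipation in $\phi$ or $\vartheta$ at zeroth order to absorb them, and Gronwall produces a $\log T$ loss.

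The paper's resolution is structural: do \emph{not} discard the good $\bar u_x$ terms. Using the relations \eqref{SRW_p2} that tie $\bar\rho_x,\bar u_x,\bar\theta_x$ together along the rarefaction, all of these quadratic pieces combine into a single expression $-\rho\bar\theta_x\,F(\psi,a,b)$ with $a=\ln(\hat\rho/\rho)$, $b=\ln(\theta/\hat\theta)$. One then checks (Sylvester's criterion) that the Hessian $H_F(0)$ is positive definite provided $\tilde\delta$ is small, so the entire quadratic part of $F$ is nonnegative and, since $\bar\theta_x\ge 0$, contributes with the right sign. What remains is only a \emph{cubic} remainder in $(\psi,a,b)$, and that one \emph{can} estimate: a cubic term picks up $\|\Psi\|_{L^\infty}\lesssim\|\Psi\|^{1/2}\|\Psi_x\|^{1/2}$, and the extra half-power of $\|\Psi_x\|$ lets Young push the time weight past $(1+t)^{-1}$ to something integrable (the paper gets $(1+t)^{-7/6}$). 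This Hessian cancellation is the nontrivial idea in Step~1 of the proof; without it \eqref{est1} does not close.
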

\begin{proof}

\noindent{\bf Step 1}.
After a straightforward calculation, one can derive
  \begin{equation}\label{id1}
  (\rho \mathcal{E})_t+\left[\rho u\mathcal{E}+\psi(P-\hat{P})-\mu\psi\psi_x
  -\kappa\frac{\vartheta\vartheta_x}{\theta}\right]_x
  +\mu\frac{\hat{\theta}\psi_x^2}{\theta}
  +\kappa\frac{\hat{\theta}\vartheta_x^2}{\theta^2}=\sum_{q=1}^{5}\mathcal{R}_q,
  \end{equation}
  from which we obtain
  \begin{equation}\label{id1a}
  \frac{\mathrm{d}}{\mathrm{d}t}\int_{\mathbb{R}_+}\rho \mathcal{E}\mathrm{d}x
   - u_- \rho \Phi\left(\frac{\hat{\rho}}{\rho} \right)(t,0)
  +\int_{\mathbb{R}_+}\left[\mu\frac{\hat{\theta}\psi_x^2}{\theta}
  +\kappa\frac{\hat{\theta}\vartheta_x^2}{\theta^2}\right]\mathrm{d}x
  =\sum_{q=1}^{5}\int_{\mathbb{R}_+}\mathcal{R}_q\mathrm{d}x,
  \end{equation}
  where each term $\mathcal{R}_q$
  on the right-hand side of \eqref{id1} will be defined below.
Before defining and estimating all the terms
on the right-hand side of \eqref{id1a},
we set
\begin{align*}
   \bar{U}:=(\bar{\rho},\bar{u},\bar{\theta}),\quad
   \tilde{U}:=(\tilde{\rho},\tilde{u},\tilde{\theta}),\quad
   U_m:=({\rho}_m,{u}_m,{\theta}_m),\quad
   \Psi :=(\phi,\psi,\vartheta).
\end{align*}
First we consider
  \begin{equation*}            
     \mathcal{R}_1:=
     \frac{\vartheta}{\theta}
     \left[\kappa\frac{\bar{\theta}_x\vartheta_x}{\theta}
     +2\mu\bar{u}_x\psi_x\right],
  \end{equation*}
which is trivially estimated by Sobolev's inequality as
\begin{equation*}         
   |\mathcal{R}_1 |
  \lesssim m_2^{-2}  |\bar{U}_x | |\Psi | |\Psi_x|
  \lesssim m_2^{-2}  \|\Psi \|^{\frac{1}{2}} \|\Psi_x \|^{\frac{1}{2}}
   |\bar{U}_x | |\Psi_x |.
\end{equation*}
In light of \eqref{SRW_p3} and \eqref{apriori},
we apply H\"{o}lder's and Young's inequalities
to deduce
\begin{equation}\label{R1i}
\begin{aligned}
  \int_{\mathbb{R}_+} |\mathcal{R}_1 |\mathrm{d}x\lesssim~&
   m_2^{-2}   \|\Psi \|^{\frac{1}{2}} \|\bar{U}_x \|
   \|\Psi_x \|^{\frac{3}{2}}\\
  \lesssim~& m_2^{-2}N^{\frac{1}{2}}  \bar{\delta}^{\frac{1}{2}}
  (1+t)^{-\frac{1}{2}}  \|\Psi_x \|^{\frac{3}{2}}\\[1mm]
  \lesssim~&  (1+t)^{-2}+m_2^{-\frac{8}{3}}N^{\frac{2}{3}}
  \bar{\delta}^{\frac{2}{3}} \|\Psi_x \|^2.
\end{aligned}
\end{equation}
Next we consider the term
  \begin{equation*}             
     \mathcal{R}_2:=
    \frac{\vartheta}{\theta}(\kappa\bar{\theta}_{xx}+\mu\bar{u}_x^2)
    +\mu\psi\bar{u}_{xx}.
  \end{equation*}
This term can be controlled by Sobolev's inequality as
\begin{equation*}
   |\mathcal{R}_2 |
  \lesssim m_2^{-1}  |\Psi |\left[ |\bar{U}_{xx} |+
   |\bar{U}_{x} |^2\right]
  \lesssim m_2^{-1}  \|\Psi \|^{\frac{1}{2}}\|\Psi_x\|^{\frac{1}{2}}
  \left[|\bar{U}_{xx} |+|\bar{U}_{x} |^2\right].
\end{equation*}
According to \eqref{SRW_p3}, \eqref{SRW_p4} and
\eqref{apriori}, we infer
\begin{equation}\label{R2i}
\begin{aligned}
  \int_{\mathbb{R}_+} |\mathcal{R}_2 |\mathrm{d}x\lesssim~&
   m_2^{-1} \|\Psi\|^{\frac{1}{2}}  \|\Psi_{x}\|^{\frac{1}{2}}
  \left[\|\bar{U}_{xx}\|_{L^1}+
  \|\bar{U}_{x}\|^2\right]\\
  \lesssim~& m_2^{-1}N^{\frac{1}{2}}\|\Psi_x\|^{\frac{1}{2}}
  \bar{\delta}^{\frac{1}{8}}
  (1+t)^{-\frac{7}{8}}\\[1.5mm]
  \lesssim~&  (1+t)^{-\frac{7}{6}}+m_2^{-4}N^{2}
  \bar{\delta}^{\frac{1}{2}}\|\Psi_x\|^2.
\end{aligned}
\end{equation}
Let us now consider the term
  \begin{equation*}             
    \mathcal{R}_3:=
    -R\frac{\hat{\theta}\phi\hat{f_1}}{\hat{\rho}}
    +2\mu\frac{\vartheta\bar{u}_x\tilde{u}_x}{\theta}
    -\rho\frac{\vartheta\hat{f_3}+\hat{\theta}\psi\hat{f_2}}{\hat{\rho}\hat{\theta}},
  \end{equation*}
It is not difficult to derive from \eqref{f1_hat}-\eqref{f3_hat} that
\begin{equation} \label{f_hat1}
   | (\hat{f_1},\hat{f_2},\hat{f_3} ) |
  \lesssim  |\bar{U}_x | |\tilde{U}-U_m |
  + |\tilde{U}_x | |\bar{U}-U_m |.
\end{equation}
In view of  Lemma \ref{lem_SRW}, we deduce that
$\bar{U}(t,0)\equiv U_m$ and hence we have  that for $q\geq 1$,
\begin{equation*}
  \begin{aligned}
      &\left\||\bar{U}_x||\tilde{U}-U_m|
  +|\tilde{U}_x||\bar{U}-U_m|+
  |\bar{U}_x||\tilde{U}_x|\right\|_{L^q}\\
  &\quad \lesssim
  \left\||\bar{U}_x||\tilde{U}-U_m|
  +|\tilde{U}_x|
  \int_0^x|\bar{U}_x|(\cdot,y)\mathrm{d}y+
  |\bar{U}_x||\tilde{U}_x|\right\|_{L^q}\\
  &\quad \lesssim
  \|\bar{U}_x\|_{L^{\infty}}
 \left\||\tilde{U}-U_m|
  +x|\tilde{U}_x|+
  |\tilde{U}_x|
  \right\|_{L^q},
  \end{aligned}
\end{equation*}
which combined with \eqref{decay1} implies
\begin{equation}\label{point1}
     \left\||\bar{U}_x||\tilde{U}-U_m|
  +|\tilde{U}_x||\bar{U}-U_m|+
  |\bar{U}_x||\tilde{U}_x|\right\|_{L^q}
  \lesssim  \tilde{\delta}  \|\bar{U}_x\|_{L^{\infty}}.
\end{equation}
It follows from
\eqref{apriori}, \eqref{f_hat1}
and \eqref{point1} with $q=1$ that
\begin{equation*}
  \int_{\mathbb{R}_+}|\mathcal{R}_3|\mathrm{d}x
  \lesssim N m_2^{-1}
  \|\Psi\|_{L^{\infty}}   \tilde{\delta}\|\bar{U}_x\|_{L^{\infty}}.
\end{equation*}
We use \eqref{SRW_p3} and Young's inequality
to have
\begin{equation} \label{R3i}
\begin{aligned}
  \int_{\mathbb{R}_+}|\mathcal{R}_3|\mathrm{d}x
  \lesssim N m_2^{-1}\tilde{\delta}(1+t)^{-1}
  \|\Psi\|^{\frac{1}{2}}  \|\Psi_{x}\|^{\frac{1}{2}}
  \lesssim
  (1+t)^{-\frac{4}{3}}+ m_2^{-4}\tilde{\delta}^{4}N^6\|\Psi_{x}\|^2.
\end{aligned}
\end{equation}
We then estimate the term
\begin{equation*}  
  \begin{aligned}
        \mathcal{R}_4:=~&
    \psi\tilde{\theta}_x\left[-\frac{c_v}{\hat{\theta}}\rho\vartheta
    +R\phi+R\rho\Phi\left(\frac{\hat{\rho}}{\rho}\right)
    +c_v\rho\Phi\left(\frac{\theta}{\hat{\theta}}\right)\right] \\
    &-\rho\tilde{u}_x\left[\frac{R^2\hat{\theta}}{c_v}
    \Phi\left(\frac{\hat{\rho}}{\rho}\right)+\psi^2
    +R\hat{\theta}\Phi\left(\frac{\theta}{\hat{\theta}}\right)\right]
    +\mu\psi\tilde{u}_{xx}\left[1-\frac{\rho}{\hat{\rho}}\right]
    \\
    &+\vartheta\left[\frac{1}{\theta}-\frac{\rho}{\hat{\rho}\hat{\theta}}\right]
    \left[\kappa\tilde{\theta}_{xx}+\mu\tilde{u}_x^2\right]
    +\frac{\vartheta}{\theta}
    \left[\kappa\frac{\tilde{\theta}_x\vartheta_x}{\theta}+2\mu\tilde{u}_x\psi_x\right]
    \\
    &+\frac{\rho}{\hat{\rho}}
    \left[\kappa\tilde{\theta}_{xx}+\mu\tilde{u}_x^2+\hat{f_3}\right]
    \left[\frac{R}{c_v} \Phi\left(\frac{\hat{\rho}}{\rho}\right)
    + \Phi\left(\frac{\theta}{\hat{\theta}}\right)\right].
  \end{aligned}
\end{equation*}
To this end, we first obtain from the identity
\begin{equation*}
    \Phi(z)=\int_0^1\int_0^1\theta_1\Phi''(1+\theta_1\theta_2(z-1))
    \mathrm{d}\theta_2\mathrm{d}\theta_1
    (z-1)^2
\end{equation*}
that
\begin{equation}\label{Phi1}
  (z+1)^{-2}(z-1)^2\lesssim\Phi(z)\lesssim (z^{-1}+1)^2(z-1)^2.
\end{equation}
This last inequality implies
\begin{equation} \label{Phi2}
  \Phi\left(\frac{\hat{\rho}}{\rho}\right) \lesssim m_1^{-2}\phi^2
  \lesssim m_1^{-2}N|\phi|,\quad
    \Phi\left(\frac{\theta}{\hat{\theta}}\right) \lesssim m_2^{-2}\vartheta^2
  \lesssim m_2^{-2}N|\vartheta|.
\end{equation}
In light of \eqref{f_hat1} and \eqref{Phi2},
we discover
\begin{equation} \label{R4a}
    \int_{\mathbb{R}_+}|\mathcal{R}_4|\mathrm{d}x
  \lesssim N^2 m_1^{-2}m_2^{-2}\sum_{\ell=0}^2\sum_{k=0}^1
  \int_{\mathbb{R}_+}\big|\partial_x^{\ell}(\tilde{U}-U_m)\big|
  |\partial_x^{k}\Psi|^2\mathrm{d}x.
\end{equation}
To estimate the terms on the right-hand side of \eqref{R4a},
we utilize an idea in Nikkuni-Kawashima \cite{NK00MR1793167}, that is,
the following Poincar\'{e} type inequality
$$
|\varphi(t,x)|\leq|\varphi(t,0)|+\sqrt{x}\|\varphi_x(t)\|
\quad {\rm for}\ x\in\mathbb{R}_+.
$$
Applying this inequality to $\Psi$, we deduce
from  \eqref{decay1} and \eqref{Phi1} that
\begin{equation}\label{R4b}
\begin{split}
  \int_{\mathbb{R}_+}|\partial_x^{\ell}(\tilde{U}-U_m)|
  |\partial_x^{k}\Psi|^2\mathrm{d}x
  \lesssim \tilde{\delta} \phi(t,0)^2
  +\tilde{\delta}\|\Psi_{x}\|^2
   \lesssim
  N^2 m_1^{-1}\tilde{\delta}
  \rho\Phi\left(\frac{\hat{\rho}}{\rho}\right) (t,0)
  +\tilde{\delta}\|\Psi_{x}\|^2
\end{split}
\end{equation}
 for $k=0,1$ and  $\ell\in\mathbb{N}$.
Plug \eqref{R4b} into \eqref{R4a} to deduce
 \begin{equation}\label{R4i}
\begin{split}
   \int_{\mathbb{R}_+} |\mathcal{R}_4 |\mathrm{d}x
  \lesssim
  N^4  m_1^{-3}m_2^{-2}\tilde{\delta}
  \rho\Phi\left(\frac{\hat{\rho}}{\rho}\right) (t,0)
  +N^2 m_1^{-2}m_2^{-2}\tilde{\delta} \|\Psi_{x}\|^2.
\end{split}
\end{equation}
We next estimate the term
  \begin{equation*}     
  \begin{aligned}
        \mathcal{R}_5:=~&
    \psi\bar{\theta}_x\left[-\frac{c_v}{\hat{\theta}}\rho\vartheta
    +R\phi+R\rho\Phi\left(\frac{\hat{\rho}}{\rho}\right)
    +c_v\rho\Phi\left(\frac{\theta}{\hat{\theta}}\right)\right] \\
    &-\rho\bar{u}_x\left[\frac{R^2\hat{\theta}}{c_v}
    \Phi\left(\frac{\hat{\rho}}{\rho}\right)+\psi^2
    +R\hat{\theta}\Phi\left(\frac{\theta}{\hat{\theta}}\right)\right].
  \end{aligned}
  \end{equation*}
To this end, we introduce
\begin{equation}\label{ab}
  a:=\ln\left(\frac{\hat{\rho}}{\rho}\right)\quad {\rm and}\quad
  b:=\ln\left(\frac{\theta}{\hat{\theta}}\right).
\end{equation}
In light of  \eqref{SRW_p2}, one can find
\begin{equation}\label{R5a}
  \mathcal{R}_5=-\rho\bar{\theta}_x F(\psi,a,b)
\end{equation}
with
\begin{equation*} 
  \begin{aligned}
    F(\psi,a,b):=~&
    R\psi a+c_v\psi b+
    \sqrt{{R}{\gamma}}R\bar{\theta}^{-\frac{1}{2}}\hat{\theta}
     (\mathrm{e}^a-a-1 )
    \\
    &+\frac{\sqrt{R\gamma  }}{\gamma-1}\bar{\theta}^{-\frac{1}{2}}\psi^2
    +\sqrt{R\gamma}c_v\bar{\theta}^{-\frac{1}{2}}\hat{\theta}
     (\mathrm{e}^b-b-1 ).
  \end{aligned}
\end{equation*}
One can easily deduce that
$
  (F,\partial_{\psi}F,\partial_{a}F,\partial_{b}F)(0,0,0)=(0,0,0,0)
$
and that the Hessian matrix of $F$ at point $(0,0,0)$ is
\begin{equation*}
  H_{F} =
  \begin{pmatrix}
    2\frac{\sqrt{R\gamma  }}{\gamma-1}\bar{\theta}^{-\frac{1}{2}} &
    R&c_v\\[1mm]
    R&\sqrt{  R\gamma } R\bar{\theta}^{-\frac{1}{2}}\hat{\theta}&0\\[1mm]
    c_v&0&\sqrt{ R\gamma}c_v\bar{\theta}^{-\frac{1}{2}}\hat{\theta}
  \end{pmatrix}.
\end{equation*}
Let $D_k$ $(k=1,2,3)$ be
the $k\times k$ leading principal minor of $H_{F} $. Then we have
\begin{equation*}
  D_1=2\frac{\sqrt{R\gamma  }}{\gamma-1}\bar{\theta}^{-\frac{1}{2}},\ \
  D_2=R^2\frac{2\gamma\bar{\theta}^{-1}\hat{\theta}-\gamma+1}{\gamma-1},\ \
  D_3=c_v\sqrt{\gamma R}R^2\bar{\theta}^{-\frac{1}{2}}\hat{\theta}
  \frac{2\gamma\bar{\theta}^{-1}\hat{\theta}-\gamma
  }{\gamma-1}.
\end{equation*}
Apply Sylvester's criterion (see \cite[Theorem 7.2.5]{HJ13MR2978290}) to
deduce that there exists a positive constant $\varepsilon_1$ such that
if $\tilde{\delta}\leq \varepsilon_1$, then
$H_{F} $ is positive definite.
Then it follows from the Taylor formula
(see \cite[Theorem VII.5.8]{A08MR2419362}) that
\begin{equation*} 
   \begin{aligned}
     F(\psi,a,b) =  ~&\sum_{q=0}^2
     \frac{1}{q!}(\psi\partial_{\psi}+a\partial_{a}+b\partial_{b})^q
     F(0,0,0)\\
     & +\frac{1}{2}\int_0^1(1-t)^2(\psi\partial_{\psi}+a\partial_{a}+b\partial_{b})^3
     F(t\psi,ta,tb)\mathrm{d}t
     \\ \leq ~&
     \frac{1}{2}(\psi,a,b)H_{F} (\psi,a,b)^{T}
     +\frac{1}{2}\sqrt{R\gamma }\bar{\theta}^{-\frac{1}{2}}\hat{\theta}
     \int_0^1\left(Ra^3\mathrm{e}^{ta}+c_vb^3\mathrm{e}^{tb}\right)\mathrm{d}t
         .
   \end{aligned}
\end{equation*}
Due to $\bar{\theta}_x\geq 0$, we derive that
if $\tilde{\delta}\leq \varepsilon_1 $,
then
\begin{equation}\label{R5b}
  \mathcal{R}_5\lesssim \rho\bar{\theta}_x
  \left[a^2(\mathrm{e}^{|a|}-1)+b^2(\mathrm{e}^{|b|}-1)\right].
\end{equation}
We note that the identity
\begin{equation*}
    \ln z=\int_0^1\frac{(z-1)\mathrm{d}\theta_1}{1+\theta_1(z-1)}
\end{equation*}
implies
\begin{equation}\label{ln}
  |\ln z|^2\lesssim (z^{-1}+1)^2(z-1)^2.
\end{equation}
Then we apply \eqref{ln} to $a$ and $b$ and use the estimate
\eqref{R5b} to find
\begin{equation*} 
  \mathcal{R}_5\lesssim N m_{1}^{-3}m_{2}^{-3}\bar{\theta}_x
  \left|\Psi\right|^3,
\end{equation*}
which combined with \eqref{SRW_p3} yields
\begin{equation}\label{R5i}
  \begin{aligned}
  \int_{\mathbb{R}_+} |\mathcal{R}_5 |\mathrm{d}x
  \lesssim ~&Nm_{1}^{-3}m_{2}^{-3} \|\bar{\theta}_x \|_{L^{\infty}}
  \|\Psi\|_{L^{\infty}}  \|\Psi\|^2 \\
  \lesssim ~&Nm_{1}^{-3}m_{2}^{-3}\bar{\delta}^{\frac{1}{8}}(1+t)^{-\frac{7}{8}}
  \|\Psi\|^{\frac{5}{2}}  \|\Psi_{x}\|^{\frac{1}{2}}\\[1.5mm]
  \lesssim ~&
  (1+t)^{-\frac{7}{6}}+
   m_{1}^{-12}m_2^{-12}\bar{\delta}^{\frac{1}{2}}N^{14}
  \|\Psi_{x}\|^2.
\end{aligned}
\end{equation}
Plug \eqref{R1i}, \eqref{R2i}, \eqref{R3i}, \eqref{R4i}
and \eqref{R5i} into \eqref{id1a} to obtain
  \begin{equation}\label{id1b}
  \begin{aligned}
  &\frac{\mathrm{d}}{\mathrm{d}t}\int_{\mathbb{R}_+}\rho \mathcal{E}\mathrm{d}x
   - u_-\rho \Phi\left(\frac{\hat{\rho}}{\rho}\right)(t,0)
  +\int_{\mathbb{R}_+}\left[ \frac{\psi_x^2}{\theta}
  + \frac{ \vartheta_x^2}{\theta^2}\right]\mathrm{d}x\\
  ~&\quad \lesssim (1+t)^{-\frac{7}{6}}
  + m_{1}^{-12}m_2^{-12}N^{16}
  \left[\bar{\delta}^{\frac{1}{2}}+\tilde{\delta}\right]
  \left[
  \rho\Phi\left(\frac{\hat{\rho}}{\rho}\right) (t,0)+
  \|\Psi_{x}\|^2\right].
  \end{aligned}
  \end{equation}
Hence we can find a sufficiently small  constant
$\varepsilon_2>0$ such that if
\begin{equation}\label{asmp2}
   m_{1}^{-12}m_2^{-12}N^{18}
  \left[\bar{\delta}^{\frac{1}{2}}+\tilde{\delta}\right]\leq \varepsilon_2,
\end{equation}
then
  \begin{equation*}
  \begin{aligned}
  &\frac{\mathrm{d}}{\mathrm{d}t}\int_{\mathbb{R}_+}\rho \mathcal{E}\mathrm{d}x
   - \rho u_-\Phi\left(\frac{\hat{\rho}}{\rho}\right)(t,0)
  +\int_{\mathbb{R}_+}\left[ \frac{\psi_x^2}{\theta}
  + \frac{ \vartheta_x^2}{\theta^2}\right]\mathrm{d}x\\[1mm]
  &\quad \lesssim (1+t)^{-\frac{7}{6}}
  + m_{1}^{-12}m_2^{-12}N^{16}
  \left[\bar{\delta}^{\frac{1}{2}}+\tilde{\delta}\right]
  \left\|\phi_{x}\right\|^2,
  \end{aligned}
  \end{equation*}
from which we obtain
  \begin{equation}\label{id1d}
  \begin{aligned}
  &\sup_{0\leq t\leq T}\int_{\mathbb{R}_+}\rho \mathcal{E}\mathrm{d}x
   +\int_0^T
   \rho\Phi\left(\frac{\hat{\rho}}{\rho}\right)(t,0)\mathrm{d}t
  +\int_0^T\int_{\mathbb{R}_+}\left[ \frac{\psi_x^2}{\theta}
  + \frac{ \vartheta_x^2}{\theta^2}\right]   \\
  &\quad \lesssim 1
  + m_{1}^{-12}m_2^{-12}N^{16}
  \left[\bar{\delta}^{\frac{1}{2}}+\tilde{\delta}\right]
  \int_0^T\left\|\phi_{x}(t)\right\|^2\mathrm{d}t.
  \end{aligned}
  \end{equation}

\noindent{\bf Step 2}. We now make some estimates for the last
 term in \eqref{id1d}.
We first differentiate \eqref{per1}$_1$ with respect to $x$
and then multiply the resulting equation by $\phi_x/\rho^3$ to find
\begin{equation}\label{id2}
  \left(\frac{\phi_x^2}{2\rho^3}\right)_t+\left(\frac{u\phi_x^2}{2\rho^3}\right)_x
  -\hat{u}_x\frac{\phi_x^2}{ \rho^3}+\hat{\rho}_x\frac{\phi_x\psi_x}{\rho^3}
  =f_{1x}\frac{\phi_x}{ \rho^3}-\frac{\phi_x\psi_{xx}}{ \rho^2}.
\end{equation}
Multiply \eqref{per1}$_2$ by $\phi_x/\rho^2$ to have
\begin{equation}\label{id3}
  \begin{aligned}
   &\left(\frac{\phi_x\psi}{\rho}\right)_t-
   \left[\frac{\phi_t\psi}{\rho}+\frac{\hat{\rho}_x\psi^2}{\rho}\right]_x
   -\mu  \frac{\phi_x\psi_{xx}}{ \rho^2}   \\
      & \quad =
      -\frac{\rho_x\hat{u}_x\phi\psi}{\rho^2}
      -\frac{\hat{\rho}_{xx}}{\rho}\psi^2
      -\frac{\hat{\rho}_x\psi\psi_x}{\rho}
      -\frac{\hat{\rho}_x\psi\hat{f}_1}{\rho^2}
      -\frac{\rho_x\psi\psi_x}{\rho}\\
     & \quad\quad\,
     +\frac{\hat{u}_x\psi_x\phi}{\rho}
     +\frac{\psi_x}{\rho}\hat{f_1}+\psi_x^2+\frac{u_x\phi_x\psi}{\rho}
     - (P-\hat{P} )_x\frac{\phi_x}{\rho^2}+f_2\frac{\phi_x}{\rho^2}.
  \end{aligned}
\end{equation}
In light of \eqref{id2} and \eqref{id3}, we have
\begin{equation}\label{id4}
      \left[\frac{\mu\phi_x^2}{2\rho^3}+\frac{\phi_x\psi}{\rho}\right]_t
     +\left[\frac{\mu u\phi_x^2}{2\rho^3}
     -\frac{\phi_t\psi}{\rho}-\frac{\hat{\rho}_x\psi^2}{\rho}\right]_x
     +\frac{R\theta\phi_x^2}{\rho^2}
      =\sum_{q=1}^{5}\mathcal{Q}_q,
\end{equation}
which combined with Cauchy's inequality implies
\begin{equation}\label{id4a}
  \int_{\mathbb{R}_+}\frac{\phi_x^2}{\rho^3}\mathrm{d}x
+\int_0^t\frac{\phi_x^2}{\rho^3}(s,0)\mathrm{d}s
+\int_0^t\int_{\mathbb{R}_+}\frac{\theta\phi_x^2}{\rho^2}
\lesssim
1+\int_{\mathbb{R}_+}\rho\psi^2\mathrm{d}x
+\sum_{q=1}^{5}\int_0^t\int_{\mathbb{R}_+}|\mathcal{Q}_q|,
\end{equation}
where each term $\mathcal{Q}_q$
  on the right-hand side of \eqref{id4} will be defined below.
First, let us define
\begin{equation*}
  \mathcal{Q}_1:=\psi_x^2-\frac{R\phi_x\vartheta_x}{\rho},
\end{equation*}
and by applying Cauchy's inequality, we have
\begin{equation}\label{Q1}
  \int_0^t\int_{\mathbb{R}_+}|\mathcal{Q}_1|
  \lesssim \epsilon \int_0^t\int_{\mathbb{R}_+}\frac{\theta\phi_x^2}{\rho^2}
  +\int_0^t\int_{\mathbb{R}_+}\left[
\psi_x^2+C(\epsilon)\frac{\vartheta_x^2}{\theta}\right].
\end{equation}
Then we consider the term
\begin{equation*}
  \begin{aligned}
    \mathcal{Q}_2:=~& -2\mu\frac{\hat{\rho}_x\phi_x\psi_x}{\rho^3}
    -\frac{\mu\phi_x}{\rho^3}\left(\tilde{u}_{xx}\phi
    +\tilde{\rho}_{xx}\psi\right)
    -\frac{\phi_x}{\rho^2\hat{\rho}}
    (\mu\tilde{u}_{xx}\phi-R\tilde{\rho}_x\hat{\theta}\phi
    -\tilde{u}_x\hat{\rho}\phi\psi)
    \\ &
        -\frac{\phi\psi}{\rho^2}(\tilde{\rho}_x\hat{u}_x+\bar{\rho}_x\tilde{u}_x)
    -\frac{\tilde{\rho}_{xx}\psi^2}{\rho}
    -2\frac{\tilde{\rho}_x\psi\psi_x}{\rho}
    +\frac{\tilde{u}_x\phi\psi_x}{\rho}
    -\frac{R\tilde{\rho}_x\vartheta\phi_x}{\rho^2},
  \end{aligned}
\end{equation*}
which can be controlled as
\begin{equation}\label{Q2a}
  | \mathcal{Q}_2|\lesssim
    m_1^{-3}(\bar{\delta}+\tilde{\delta})|\Psi_x|^2
    +m_1^{-3}|(\tilde{U}_{x},\tilde{U}_{xx})||(\Psi,\Psi_x)|^2.
\end{equation}
In light of \eqref{R4b},
we have
\begin{equation}\label{Q2b}
\begin{split}
  \int_0^t\int_{\mathbb{R}_+}|\mathcal{Q}_2|
    \lesssim
  N^2 m_1^{-4}\tilde{\delta}
  \int_0^t\rho\Phi\left(\frac{\hat{\rho}}{\rho}\right) (s,0)\mathrm{d}s
  + m_1^{-3}(\bar{\delta}+\tilde{\delta})\int_0^t\left\|\Psi_{x}(s)\right\|^2
  \mathrm{d}s.
\end{split}
\end{equation}
For the term
\begin{equation*}
  \begin{aligned}
    \mathcal{Q}_3:=&-\frac{\mu\phi_x}{\rho^3} (\bar{u}_{xx}\phi
    +\bar{\rho}_{xx}\psi)
    -\frac{\phi_x}{\rho^2\hat{\rho}}
    (-R\bar{\rho}_x\hat{\theta}\phi
    -\bar{u}_x\hat{\rho}\phi\psi)\\
    &-2\frac{\bar{\rho}_x\psi\psi_x}{\rho}
    +\frac{\bar{u}_x\phi\psi_x}{\rho}
    -\frac{R\bar{\rho}_x\vartheta\phi_x}{\rho^2},
  \end{aligned}
\end{equation*}
it follows that
\begin{equation*}
  \left|\mathcal{Q}_3\right|
  \lesssim m_1^{-3} \left|(\bar{U}_x,\bar{U}_{xx})\right|
  \left|\Psi\right|\left|\Psi_x\right|
  \lesssim m_1^{-3}
  \|\Psi\|^{\frac{1}{2}}\|\Psi_x\|^{\frac{1}{2}}
  \left|(\bar{U}_x,\bar{U}_{xx})\right|\left|\Psi_x\right|.
\end{equation*}
In view of \eqref{SRW_p3}, \eqref{SRW_p4} and \eqref{apriori},
we apply H\"{o}lder's and Young's inequalities
to deduce
\begin{equation}\label{Q3b}
\begin{aligned}
  \int_0^t\int_{\mathbb{R}_+}\left|\mathcal{Q}_3\right| \lesssim~&
   m_1^{-3}  \int_0^t\|\Psi\|^{\frac{1}{2}}\|(\bar{U}_x,\bar{U}_{xx})\|
  \|\Psi_x\|^{\frac{3}{2}}\mathrm{d}s\\
  \lesssim~& m_1^{-3}N^{\frac{1}{2}}  \bar{\delta}^{\frac{1}{2}}
  \int_0^t(1+s)^{-\frac{1}{2}+\frac{1}{4q}}\left\|\Psi_x(s)\right\|^{\frac{3}{2}}
  \mathrm{d}s
  \\[1mm]
  \lesssim~&  1+m_1^{-4}N^{\frac{2}{3}}
  \bar{\delta}^{\frac{2}{3}}\int_0^t\left\|\Psi_x(s)\right\|^2\mathrm{d}s.
\end{aligned}
\end{equation}
For the term
\begin{equation*}
    \mathcal{Q}_4:=    -\frac{\bar{\rho}_x\bar{u}_x\phi\psi}{\rho^2}
    -\frac{\bar{\rho}_{xx}\psi^2}{\rho},
\end{equation*}
we have
\begin{equation*}
  |\mathcal{Q}_4 |
  \lesssim m_1^{-2}  |\Psi |^2\left[ |\bar{U}_{xx} |+
  |\bar{U}_{x} |^2\right]
  \lesssim m_1^{-2} \|\Psi\|\|\Psi_x\|
  \left[ |\bar{U}_{xx} |+
   |\bar{U}_{x} |^2\right],
\end{equation*}
which combined with \eqref{SRW_p3}-\eqref{SRW_p4} and
\eqref{apriori} yields
\begin{equation}\label{Q4a}
\begin{aligned}
  \int_0^t\int_{\mathbb{R}_+}\left|\mathcal{Q}_4\right|\lesssim~&
   m_1^{-2} \int_0^t\|\Psi\|   \|\Psi_{x}\|
  \left[\|\bar{U}_{xx}\|_{L^1}+
  \|\bar{U}_{x}\|^2\right]\mathrm{d}s\\
  \lesssim~& m_1^{-2}N
  \int_0^t \|\Psi_x(s) \|
  \bar{\delta}^{\frac{1}{4}}
  (1+s)^{-\frac{3}{4}}\mathrm{d}s\\[1mm]
  \lesssim~&  1+m_1^{-4}N^{2}
  \bar{\delta}^{\frac{1}{2}}\int_0^t\|\Psi_x(s)\|^2\mathrm{d}s.
\end{aligned}
\end{equation}
Finally we consider the term
\begin{equation*}
  \begin{aligned}
    \mathcal{Q}_5:=
    -\frac{\mu\phi_x}{\rho^3} \hat{f}_{1x}
    -\frac{\phi_x}{\rho^2\hat{\rho}} (-\mu\bar{u}_{xx}\hat{\rho}
    +\rho\hat{f_2} )
    -\frac{\hat{\rho}_x\psi\hat{f}_1}{\rho^2}
    +\frac{\psi_x\hat{f}_1}{\rho}.
  \end{aligned}
\end{equation*}
H\"{o}lder's inequality gives
\begin{equation}\label{Q5a}
  \int_0^t\int_{\mathbb{R}_+}\left|\mathcal{Q}_5\right|\lesssim
  m_1^{-3}\int_0^t\left[ \|\Psi_x \|
   \| (\bar{u}_{xx},\hat{f_1},\hat{f_2},\hat{f}_{1x} )  \|
  +\|\Psi  \|_{L^{\infty}}
   \|\hat{f_1}  \|_{L^{1}}\right]
  \mathrm{d}s.
\end{equation}
We deduce from  \eqref{f1_hat}
that
\begin{equation} \label{f_hat2}
  |\hat{f}_{1x}|\lesssim
  |\bar{U}_x||\tilde{U}_x|+
  |\tilde{U}_{xx}||\bar{U}-U_m|+|\bar{U}_{xx}||\tilde{U}-U_m|.
\end{equation}
Similar to the derivation of \eqref{point1},
we have that for $q\geq 1$,
\begin{equation}\label{point2}
  \begin{aligned}
         \left\||\bar{U}_x||\tilde{U}_x|+
  |\tilde{U}_{xx}||\bar{U}-U_m|+|\bar{U}_{xx}||\tilde{U}-U_m|\right\|_{L^q}
  \lesssim  \tilde{\delta} \|(\bar{U}_x,\bar{U}_{xx}) \|_{L^{\infty}}.
  \end{aligned}
\end{equation}
Combining the estimates
\eqref{f_hat1}, \eqref{point1}, \eqref{f_hat2} and \eqref{point2}
and utilizing Lemma \ref{lem_SRW},
we deduce
\begin{equation*} 
  \big\|\big(\bar{u}_{xx},\hat{f_1},\hat{f_2},\hat{f}_{1x}\big)(s)\big\|
  \lesssim \big\| \bar{u}_{xx}(s)\big\|+
  \tilde{\delta}\big\|(\bar{U}_x,\bar{U}_{xx})(s)\big\|_{L^{\infty}}
  \lesssim \bar{\delta}^{\frac3 7}(1+s)^{-\frac{15}{28}}.
\end{equation*}
Plug this last estimate into \eqref{Q5a} and
use \eqref{f_hat1}-\eqref{point1} again
to have
\begin{equation}\label{Q5c}
  \begin{aligned}
  \int_0^t\int_{\mathbb{R}_+}\left|\mathcal{Q}_5\right|\lesssim~&
  m_1^{-3}\int_0^t\left[\big\|\Psi_x\big\|
  \bar{\delta}^{\frac3 7}(1+s)^{-\frac{15}{28}}
  +\tilde{\delta}(1+s)^{-1}
  \|\Psi\|^{\frac{1}{2}}  \|\Psi_{x}\|^{\frac{1}{2}}
  \right]
  \mathrm{d}s\\
  \lesssim ~&
  1+\left(m_1^{-6}\bar{\delta}^{\frac{6}{7}}
  +m_1^{-12}N^2\tilde{\delta}^4
  \right)\int_0^t\|\Psi_x(s)\|^2\mathrm{d}s.
  \end{aligned}
\end{equation}
Plugging \eqref{id1d}, \eqref{Q1}, \eqref{Q2b}, \eqref{Q3b}, \eqref{Q4a} and
\eqref{Q5c} into \eqref{id4a}, we take $\epsilon$ sufficiently small to find
\begin{equation*}
\begin{aligned}
&\int_{\mathbb{R}_+}\frac{\phi_x^2}{\rho^3}\mathrm{d}x
+\int_0^t\frac{\phi_x^2}{\rho^3}(s,0)\mathrm{d}s
+\int_0^t\int_{\mathbb{R}_+}\frac{\theta\phi_x^2}{\rho^2}\\
\lesssim~&1
+\int_0^t\int_{\mathbb{R}_+}\left[
\psi_x^2+\frac{\vartheta_x^2}{\theta}\right]
+m_{1}^{-12}m_2^{-12}N^{16}
  \left[\bar{\delta}^{\frac{1}{2}}+\tilde{\delta}\right]
  \int_0^t\|\phi_x(s)\|^2\mathrm{d}s\\
&+m_1^{-12} N^6\left[\bar{\delta}^{\frac{1}{2}}+\tilde{\delta}\right]
\left[\int_0^t\rho\Phi\left(\frac{\hat{\rho}}{\rho}\right)
  (s,0)\mathrm{d}s+\int_0^t\int_{\mathbb{R}_+}\left(\frac{\psi_x^2}{\theta}
  +\frac{\vartheta_x^2}{\theta^2}\right)\right],
\end{aligned}
\end{equation*}
 which combined with \eqref{id1d} gives
\begin{equation*}
\begin{aligned}
  &\int_{\mathbb{R}_+}\frac{\phi_x^2}{\rho^3}\mathrm{d}x
+\int_0^t\frac{\phi_x^2}{\rho^3}(s,0)\mathrm{d}s
+\int_0^t\int_{\mathbb{R}_+}\frac{\theta\phi_x^2}{\rho^2}
\\ &\quad
\lesssim 1
+\int_0^t\int_{\mathbb{R}_+}\left[
\psi_x^2+\frac{\vartheta_x^2}{\theta}\right]
+
  m_{1}^{-50}m_2^{-50}N^{50}
 \left[\bar{\delta}^{\frac{1}{2}}+\tilde{\delta}\right] \int_0^t\int_{\mathbb{R}_+}\frac{\theta\phi_x^2}{\rho^2}.
\end{aligned}
\end{equation*}
We take $\epsilon_0>0$ small enough and use \eqref{asmp1} to have
\begin{equation}\label{id4e}
  \int_{\mathbb{R}_+}\frac{\phi_x^2}{\rho^3}\mathrm{d}x
+\int_0^t\frac{\phi_x^2}{\rho^3}(s,0)\mathrm{d}s
+\int_0^t\int_{\mathbb{R}_+}\frac{\theta\phi_x^2}{\rho^2}
\lesssim
1+\int_0^t\int_{\mathbb{R}_+}\left[
\psi_x^2+\frac{\vartheta_x^2}{\theta}\right].
\end{equation}
The estimate \eqref{est1} follows by plugging \eqref{id4e}
into \eqref{id1d}
and using the condition \eqref{asmp1}
for a sufficiently small $\epsilon_0>0$.
Combine \eqref{est1} and \eqref{id4e} to deduce \eqref{est2}.
The proof of the lemma is completed.
\end{proof}

\subsection{Uniform bounds on density} \label{sec_den}
Having obtained the energy estimate \eqref{est1},
we can proceed to deduce the positive lower and upper bounds
of the density $\rho(t,x)$
uniformly in time $t$ and space $x$ in this subsection.
For this purpose, we transform the outflow problem into
the corresponding problem in the Lagrangian coordinate
by introducing the Lagrangian variable
\begin{equation}\label{y}
  y=-u_-\int_{0}^{t}\rho(s,0)\mathrm{d}s+\int_{0}^{x}\rho(t,z)\mathrm{d}z.
\end{equation}
By the coordinate change $(t,x)\mapsto(t,y)$, the domain
$[0,T]\times\mathbb{R}_+$ is mapped into
$$\Omega_T:=\{(t,y):0\leq t\leq T, y>Y(t)\}\quad {\rm with}\quad
Y(t):=-u_-\int_{0}^{t}\rho(s,0)\mathrm{d}s,$$
and the outflow problem \eqref{NS_E}-\eqref{bdy}
is transformed into the following initial boundary value problem
\begin{equation}\label{NS_L}
\left\{
  \begin{aligned}
    v_{t}-u_y&=0,\\[2mm]
    u_{t}+P_y&=\left(\frac{\mu u_y}{v}\right)_y, \\
    \left(c_v\theta+\frac{u^2}{2}\right)_{t}+(Pu)_y
    &=\left(\frac{\kappa\theta_y}{v}
    +\frac{\mu uu_y}{v}\right)_y \quad {\rm for}\
    y>Y(t),\\
    (u,\theta)|_{y=Y(t)}&=(u_-,\theta_-),\\[3mm]
    (v,u,\theta)|_{t=0}&=(v_0,u_0,\theta_0).
  \end{aligned}\right.
\end{equation}
Here  $v={1}/{\rho}$ stands for the specific volume of the gas
and $v_0=1/\rho_0$.
The basic energy estimate \eqref{est1} in Eulerian coordinate
can be rewritten as a corresponding estimate in Lagrangian coordinate
as a direct consequence of the transformation \eqref{y}.
\begin{corollary}
Suppose that the conditions listed in Lemma \ref{lem_bas} hold.
Then
\begin{equation}\label{est1a}
  \sup_{0\leq t\leq T}\int_{Y(t)}^{\infty}{\mathcal{E}}\mathrm{d}y
    +\int_0^{T}\int_{Y(t)}^{\infty}\left[\frac{\psi_y^2}{v \theta}
  +\frac{\vartheta_y^2}{v\theta^2}\right]\mathrm{d}y\mathrm{d}t
  \lesssim 1.
\end{equation}
\end{corollary}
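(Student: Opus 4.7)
The plan is to deduce the corollary by applying the change of variables $(t,x)\mapsto(t,y)$ defined in \eqref{y} directly to the basic estimate \eqref{est1} of Lemma \ref{lem_bas}. First I would verify that, at each fixed $t\in[0,T]$, the map $x\mapsto y(t,x)$ is a smooth diffeomorphism of $[0,\infty)$ onto $[Y(t),\infty)$: by the a priori lower bound $\rho\ge m_1>0$ from \eqref{apriori} the Jacobian $\partial y/\partial x=\rho(t,x)$ is strictly positive with inverse $\partial x/\partial y=v=1/\rho$, while $\rho\to\rho_+>0$ as $x\to\infty$ guarantees that the image fills $[Y(t),\infty)$.

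Next I would push the three non-negative contributions on the left-hand side of \eqref{est1} through this change of variables. The Jacobian relation $\mathrm{d}x=v\,\mathrm{d}y$ immediately yields
\begin{equation*}
\int_{\mathbb{R}_+}\rho\mathcal{E}(t,x)\,\mathrm{d}x
=\int_{Y(t)}^{\infty}\rho\mathcal{E}\cdot v\,\mathrm{d}y
=\int_{Y(t)}^{\infty}\mathcal{E}(t,y)\,\mathrm{d}y.
\end{equation*}
For the dissipative quantities, the chain rule $\partial_x=\rho\,\partial_y$ at fixed $t$ gives $\psi_x=\rho\psi_y$ and $\vartheta_x=\rho\vartheta_y$, whence
\begin{equation*}
\int_{\mathbb{R}_+}\frac{\psi_x^2}{\theta}\,\mathrm{d}x
=\int_{Y(t)}^{\infty}\frac{\rho^2\psi_y^2}{\theta}\cdot v\,\mathrm{d}y
=\int_{Y(t)}^{\infty}\frac{\psi_y^2}{v\theta}\,\mathrm{d}y,
\end{equation*}
and the analogous identity $\int_{\mathbb{R}_+}\vartheta_x^2/\theta^2\,\mathrm{d}x=\int_{Y(t)}^{\infty}\vartheta_y^2/(v\theta^2)\,\mathrm{d}y$ holds for the temperature dissipation.

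The remaining boundary contribution $\int_0^T\rho\Phi(\hat{\rho}/\rho)(t,0)\,\mathrm{d}t$ in \eqref{est1} is non-negative since $\Phi(z)=z-\ln z-1\ge 0$, so dropping it only weakens the inequality. Taking the supremum over $t\in[0,T]$ and combining the three transformed pieces then gives precisely \eqref{est1a}. I do not expect a genuine obstacle here: the whole argument is a bookkeeping exercise powered by the positive lower bound of $\rho$ furnished by \eqref{apriori}, and the purpose of the corollary is simply to repackage \eqref{est1} in the Lagrangian framework that will be used in the density estimates of Subsection \ref{sec_den}.
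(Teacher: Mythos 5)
Your proof is correct and is exactly the computation the paper leaves implicit when it says the corollary follows ``as a direct consequence of the transformation \eqref{y}'': at fixed $t$ the Jacobian $\partial y/\partial x = \rho$ converts $\rho\mathcal{E}\,\mathrm{d}x$ into $\mathcal{E}\,\mathrm{d}y$ and, with $\psi_x=\rho\psi_y$, $\vartheta_x=\rho\vartheta_y$ and $\rho v=1$, turns $\psi_x^2/\theta\,\mathrm{d}x$ and $\vartheta_x^2/\theta^2\,\mathrm{d}x$ into $\psi_y^2/(v\theta)\,\mathrm{d}y$ and $\vartheta_y^2/(v\theta^2)\,\mathrm{d}y$, while the non-negative boundary term is dropped. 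Nothing more is needed, and your verification that $x\mapsto y(t,x)$ is a diffeomorphism onto $[Y(t),\infty)$ is the right sanity check.
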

Note that the function $Y(t)$ describing the boundary in the Lagrangian coordinate
is part of the unknown, that is, the problem \eqref{NS_L} is a free boundary problem.
To obtain the uniform bounds of the specific volume $v$
for the free boundary problem \eqref{NS_L},
we introduce
the time-dependent domain $\Omega_{i}(t)$
with $i\in\mathbb{Z}$ and $t\in[0,T]$
as
\begin{equation} \label{Omegai}
  \Omega_{i}(t):=\left\{
  \begin{aligned}
    &[Y(t),[Y(t)]+2]\quad &&{\rm if}\ i=[Y(t)]+1,\\
    &[i,i+1]\quad &&{\rm else.}
  \end{aligned}\right.
\end{equation}
Based on  the basic energy estimate \eqref{est1a}, we have the following lemma.
 \begin{lemma} \label{lem_v1}
Suppose that the conditions listed in Lemma \ref{lem_bas} hold.
 Then there exists a positive constant $C_0$, depending solely on
   $\inf_{x\in\mathbb{R}_+}\{\rho_0(x),\theta_0(x)\}$
  and $\|(\phi_0,\psi_0,\vartheta_0)\|_1$, such that
 for
 all pair $(s,t)$ with $0\leq s\leq t\leq T$
and  integer $i\geq[Y(t)]+1$,
 \begin{equation}\label{bound1}
   C_0^{-1}\leq \int_{\Omega_i(t)}{v}(s,y)\mathrm{d}y\leq C_0,\quad
    C_0^{-1}\leq \int_{\Omega_i(t)}{\theta}(s,y)\mathrm{d}y\leq C_0,
 \end{equation}
 and there are points $a_i(s,t),b_i(s,t)\in\Omega_i(t)$
    satisfying
 \begin{equation}\label{bound2}
   C_0^{-1}\leq v(s,a_i(s,t))\leq C_0,\quad
   C_0^{-1}\leq \theta(s,b_i(s,t))\leq C_0.
 \end{equation}
  \end{lemma}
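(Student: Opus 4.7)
The plan is to extract both the integral bounds \eqref{bound1} and the pointwise bounds \eqref{bound2} directly from the basic energy estimate \eqref{est1a} by exploiting the convexity of $\Phi(z)=z-\ln z-1$. A preliminary observation is that $\hat{\rho}$ and $\hat{\theta}$, being the sum of a bounded smoothed rarefaction-wave profile and a stationary-solution profile that decays to $(\rho_m,u_m,\theta_m)$ (Lemma \ref{lem_SRW} and \eqref{decay1}), are uniformly bounded above and below by positive constants on the whole half-space. After passing to Lagrangian coordinates via \eqref{y} the same two-sided bounds persist on every $\Omega_i(t)$, so \eqref{est1a} in particular gives $\int_{Y(s)}^{\infty}\bigl[\Phi(\hat{\rho}v)+\Phi(\theta/\hat{\theta})\bigr](s,y)\,\mathrm{d}y\lesssim 1$ uniformly in $s\in[0,T]$.

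For the upper bounds in \eqref{bound1}, I would use the superlinear growth of $\Phi$: there exists $M>1$ with $\Phi(z)\ge z/2$ for $z\ge M$. Splitting $\Omega_i(t)$ into $\{\hat{\rho}v\le M\}$ and $\{\hat{\rho}v>M\}$, the first part is controlled by $M|\Omega_i(t)|/\inf\hat{\rho}\lesssim 1$ and the second by $2\int\Phi(\hat{\rho}v)\lesssim 1$; this yields $\int_{\Omega_i(t)}v(s,y)\,\mathrm{d}y\lesssim 1$, and the analogous split applied to $\Phi(\theta/\hat{\theta})$ yields $\int_{\Omega_i(t)}\theta(s,y)\,\mathrm{d}y\lesssim 1$. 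For the lower bounds, I would use the elementary pointwise inequality $-\ln z\le\Phi(z)+1$ (valid for all $z>0$), which gives $\int_{\Omega_i(t)}\!-\ln(\hat{\rho}v)\,\mathrm{d}y\lesssim 1$. Jensen's inequality applied to the convex function $-\ln$ then produces
\begin{equation*}
-\ln\!\left(\frac{1}{|\Omega_i(t)|}\int_{\Omega_i(t)}\hat{\rho}v\,\mathrm{d}y\right)\le\frac{1}{|\Omega_i(t)|}\int_{\Omega_i(t)}\!-\ln(\hat{\rho}v)\,\mathrm{d}y\lesssim 1,
\end{equation*}
hence $\int_{\Omega_i(t)}\hat{\rho}v\,\mathrm{d}y\gtrsim 1$ and therefore $\int_{\Omega_i(t)}v(s,y)\,\mathrm{d}y\gtrsim 1/\sup\hat{\rho}\gtrsim 1$; the same argument with $\theta/\hat{\theta}$ in place of $\hat{\rho}v$ handles the temperature.

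With \eqref{bound1} in hand, the existence of $a_i(s,t),b_i(s,t)$ follows from the intermediate value theorem: since $|\Omega_i(t)|\in[1,2]$, the mean value $\overline{v}:=|\Omega_i(t)|^{-1}\int_{\Omega_i(t)}v(s,y)\,\mathrm{d}y$ lies in $[C_0^{-1}/2,C_0]$, so by continuity of $v(s,\cdot)$ there is a point $a_i(s,t)\in\Omega_i(t)$ at which $v(s,a_i(s,t))=\overline{v}$; after enlarging $C_0$ this gives the stated two-sided bound, and the choice of $b_i(s,t)$ is entirely analogous. The only subtle point is the uniform control of $\hat{\rho},\hat{\theta}$ away from $0$ and $\infty$ in the moving domain $\{y>Y(t)\}$; this is not really an obstacle, but it should be recorded carefully since it is the ingredient that lets every implicit constant depend only on the data listed after \eqref{apriori}, independent of the a priori parameters $m_1,m_2,N$.
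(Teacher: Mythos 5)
Your proposal is correct and rests on the same key idea as the paper: the energy estimate \eqref{est1a} controls $\int\Phi(v/\hat v)$ and $\int\Phi(\theta/\hat\theta)$ locally in the Lagrangian variable, the bounded intervals $\Omega_i(t)$ have $|\Omega_i(t)|\in[1,2]$, and the shape of $\Phi$ then forces two-sided bounds on the averages; the pointwise conclusion then follows from the mean value theorem for integrals. The one place where you diverge from the paper's execution is in how the two-sided bound is extracted: the paper applies Jensen's inequality once to $\Phi$ itself to get $\Phi(\overline{z})\le C$ and then uses that the sublevel set $\{\Phi\le C\}$ is the compact interval $[\alpha,\beta]$ determined by the two positive roots of $\Phi(z)=C$, giving both bounds simultaneously, whereas you split into an upper bound via the superlinear growth $\Phi(z)\ge z/2$ (a pointwise estimate, no Jensen) and a lower bound via $-\ln z\le\Phi(z)+1$ followed by Jensen for $-\ln$. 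The two routes are logically equivalent; the paper's is slicker, yours is a touch more elementary. One small item you gloss over but should record is the domain inclusion $\Omega_i(t)\subset[Y(s),\infty)$ for $s\le t$, which holds because $Y$ is non-decreasing ($u_-\le 0$); this is what lets you evaluate the $\Phi$-integrals at time $s$ over $\Omega_i(t)$.
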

\begin{proof}
  Let $0\leq s\leq t\leq T$ and $i\geq[Y(t)]+1$.
  According to the definition of $Y(t)$ and the sign of $u_-$,
  we have $Y(s)\leq Y(t)$ and $\Omega_{i}(t)\subset [Y(s),\infty).$
  In view of \eqref{est1a}, we get
  \begin{equation*}
    \int_{\Omega_i(t)}\Phi\left(\frac{v}{\hat{v}}\right)(s,y)\mathrm{d}y+
    \int_{\Omega_i(t)}\Phi\left(\frac{\theta}{\hat{\theta}}\right)(s,y)\mathrm{d}y
    \lesssim 1.
  \end{equation*}
  Apply Jensen's inequality to
the convex function $\Phi$ to obtain
  \begin{equation*}
     \Phi\left(\frac{1}{|\Omega_i(t)|}\int_{\Omega_i(t)}
     \frac{v}{\hat{v}}(s,y)\mathrm{d}y\right)+
    \Phi\left(\frac{1}{|\Omega_i(t)|}\int_{\Omega_i(t)}\frac{\theta}{\hat{\theta}}
    (s,y)\mathrm{d}y \right)\leq C.
  \end{equation*}
  Let $\alpha$ and $\beta$ be the two positive roots of the equation $\Phi(z)=C$.
  Then we have
  \begin{equation*}
    \alpha\leq
    \frac{1}{|\Omega_i(t)|}\int_{\Omega_i(t)}\frac{v}{\hat{v}}(s,y)\mathrm{d}y\leq \beta,
    \quad
    \alpha\leq\frac{1}{|\Omega_i(t)|}\int_{\Omega_i(t)}\frac{\theta}{\hat{\theta}}
    (s,y)\mathrm{d}y
    \leq \beta.
  \end{equation*}
  These estimates imply \eqref{bound1}.
  Finally we employ the mean value theorem
 to \eqref{bound1} to find $a_i(s,t),b_i(s,t)\in\Omega_i(t)$ satisfying
  \eqref{bound2}.
  The proof of the lemma is completed.
\end{proof}
We deduce a local representation of the solution
$v$ for the free boundary problem
\eqref{NS_L}
in the next lemma
by modifying Jiang's argument  for
fixed domains in  \cite{J99MR1671920,J02MR1912419}.
To this end, we introduce the cutoff function
  $\varphi_z\in W^{1,\infty}(\mathbb{R})$ with
parameter $z\in\mathbb{R}$ by
  \begin{equation} \label{varphi}
  \varphi_z(y)=
  \begin{cases}
    1,\quad& y<[z]+4,\\
    [z]+5-y,\quad& [z]+4\leq y< [z]+5,\\
    0,\quad& y\geq [z]+5.
  \end{cases}
\end{equation}
\begin{lemma}
  Let $(\tau,z)\in\Omega_T$. Then
\begin{equation}\label{v_form}
  v(t,y)=B_z(t,y)A_z(t)+\frac{R}{\mu}\int_0^t
  \frac{B_z(t,y)A_z(t)}{B_z(s,y)A_z(s)}\theta(s,y)\mathrm{d}s
\end{equation}
for all $t\in[0,\tau]$ and $y\in
I_z(\tau):=(Y(\tau),\infty)\cap([z]-1,[z]+4)$, where
  \begin{align}   \label{B_z}
    B_z(t,y)&:=v_0(y)\exp\left\{\frac{1}{\mu}
    \int_{y}^{\infty}
        \left(u_0(\xi)-u(t,\xi)\right)\varphi_z(\xi)\mathrm{d}\xi\right\},\\
        \label{A_z}
    A_z(t)&:=\exp\left\{\frac{1}{\mu}\int_0^t
    \int_{[z]+4}^{[z]+5}\left(\frac{\mu u_y}{v}-P\right)\mathrm{d}\xi\mathrm{d}s\right\}.
  \end{align}
\end{lemma}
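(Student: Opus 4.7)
The plan is to adapt the Kazhikhov--Jiang representation technique: multiply the momentum equation in \eqref{NS_L} by the compactly supported cutoff $\varphi_z$, integrate over $[y,\infty)$, rewrite the viscous term using the identity $v_t=u_y$, and then pass to an ODE in time whose solution recovers \eqref{v_form}.

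More explicitly, fix $(\tau,z)\in\Omega_T$ and $y\in I_z(\tau)$. Since $u_-\le 0$, $Y(t)$ is nondecreasing, so $Y(t)\le Y(\tau)<y$ for every $t\in[0,\tau]$ and the point $(t,y)$ lies inside the Lagrangian domain. Because $y<[z]+4$ we have $\varphi_z(y)=1$, while $\varphi_z'=-\mathbf{1}_{([z]+4,[z]+5)}$ and $\varphi_z$ vanishes past $[z]+5$, so every integral $\int_y^\infty(\cdots)\varphi_z\,d\xi$ is effectively taken over the bounded interval $[y,[z]+5]$. Multiplying \eqref{NS_L}$_2$ by $\varphi_z$, integrating from $y$ to $\infty$, and integrating by parts in the pressure and viscous terms yields
\begin{equation*}
\frac{d}{dt}\int_y^\infty u(t,\xi)\varphi_z(\xi)\,d\xi
-P(t,y)+\int_{[z]+4}^{[z]+5}P\,d\xi
=-\frac{\mu u_y}{v}(t,y)+\int_{[z]+4}^{[z]+5}\frac{\mu u_\xi}{v}\,d\xi,
\end{equation*}
where the boundary terms at $\xi=\infty$ vanish thanks to the support of $\varphi_z$, and the terms at $\xi=y$ use $\varphi_z(y)=1$. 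Using $v_t=u_y$ to write $\mu u_y/v=\mu(\ln v)_t$, one rearranges the above identity into
\begin{equation*}
\mu(\ln v)_t(t,y)
=P(t,y)-\frac{d}{dt}\!\int_y^\infty u\varphi_z\,d\xi
+\int_{[z]+4}^{[z]+5}\!\Bigl(\frac{\mu u_\xi}{v}-P\Bigr)d\xi.
\end{equation*}

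Integrating in time from $0$ to $t$, exponentiating, and inserting the definitions of $B_z$ and $A_z$ in \eqref{B_z}--\eqref{A_z} gives
\begin{equation*}
v(t,y)=B_z(t,y)\,A_z(t)\,\exp\!\Bigl\{\tfrac{1}{\mu}\!\int_0^tP(s,y)\,ds\Bigr\}.
\end{equation*}
To convert this exponential representation into the desired integral form \eqref{v_form}, set $G(t):=\exp\{\tfrac1\mu\int_0^t P(s,y)\,ds\}$, so $G(0)=1$ and $G'(t)=(P(t,y)/\mu)G(t)$. Substituting $v=B_zA_zG$ and $P=R\theta/v$ into the right-hand side yields $G'(t)=R\theta(t,y)/(\mu B_z(t,y)A_z(t))$, which no longer involves $G$; integrating from $0$ to $t$ and multiplying through by $B_z(t,y)A_z(t)$ produces exactly \eqref{v_form}.

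The computation is largely bookkeeping, and the main point to get right is the choice of the cutoff $\varphi_z$: it must equal $1$ at the fixed base point $y\in I_z(\tau)$ (so that the viscous and pressure boundary contributions at $\xi=y$ simplify to $\mu u_y/v$ and $P$ at $(t,y)$), while its derivative must be supported strictly to the right of $I_z(\tau)$ (so that the localized dissipation terms in $A_z$ are evaluated on the fixed interval $[[z]+4,[z]+5]$ independent of the free boundary $Y(t)$). Verifying these two geometric conditions, together with the monotonicity $Y(t)\le Y(\tau)$ that keeps the integration domain admissible for all $s\in[0,t]$, is the only subtle step; everything else is algebraic manipulation.
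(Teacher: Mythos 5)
Your proposal is correct and follows essentially the same path as the paper's proof: multiply the momentum equation by the cutoff $\varphi_z$, integrate over $[y,\infty)$ using $\varphi_z(y)=1$ and the support of $\varphi_z'$, invoke $v_t=u_y$ to convert $\mu u_y/v$ into $\mu(\ln v)_t$, and then unwind the resulting exponential formula by observing that the factor $\exp\{\tfrac1\mu\int_0^t P(s,y)\,\mathrm{d}s\}$ satisfies a linear ODE with source $R\theta/(\mu B_zA_z)$. The paper reaches the same integral identity by multiplying its version of this exponential by $R\theta/\mu$ and integrating; your introduction of the auxiliary function $G(t)$ is a bookkeeping variant of the same step.
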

\begin{proof}
We  multiply $\eqref{NS_L}_2$ by $\varphi_z$ to get
 \begin{equation} \label{id5}
   (\varphi_z u)_t=\left[\left(\mu\frac{u_y}{v}-P\right)\varphi_z\right]_y
   -\varphi_z'\left(\mu\frac{u_y}{v}-P\right).
 \end{equation}
Let $(t,y)\in[0,\tau]\times I_z(\tau)$.
Since  $y> 
Y(s)$ for each $s\in [0,\tau]$, we have
$[0,\tau]\times[y,\infty)\subset\Omega_T$.
In light of the identity $\varphi_z(y)=1$ and \eqref{NS_L}$_1$,
we integrate \eqref{id5} over
$[0,t]\times[y,\infty)$ to get
 \begin{equation*}
   -\int_y^{\infty}\varphi_z(\xi)(u(t,\xi)-u_0(\xi))\mathrm{d}\xi
   =\mu\ln\frac{v(t,y)}{v_0(y)}-R\int_0^{t}\frac{\theta(s,y)}{v(s,y)}\mathrm{d}s
   +\int_0^{t}\int_{[z]+4}^{[z]+5}\left(P-\mu\frac{u_y}{v}\right).
 \end{equation*}
 This implies that for each $t\in[0,\tau]$,
 \begin{equation}\label{id6}
  \frac{1}{v(t,y)}\exp\left\{\frac{R}{\mu}\int_0^t\frac{\theta(s,y)}{v(s,y)}\mathrm{d}s\right\}
  =\frac{1}{B_z(t,y)A_z(t)}.
\end{equation}
Multiplying \eqref{id6} by $R\theta(t,y)/\mu$
and integrating the resulting identity over $[0, t]$, we have
\begin{equation*}
  \exp\left\{\frac{R}{\mu}\int_0^{t}\frac{\theta(s,y)}{v(s,y)}\mathrm{d}s\right\}
  =1+\frac{R}{\mu}\int_0^{t} \frac{\theta(s,y)}{B_z(s,y)A_z(s)}\mathrm{d}s.
\end{equation*}
We then plug this identity into \eqref{id6} to obtain
\eqref{v_form} and complete the proof of the lemma.
\end{proof}
The following lemma
is devoted to showing
 the bounds of the specific volume $v(\tau,z)$
 uniformly in the time $\tau$ and the Lagrangian variable $z$.
\begin{lemma} \label{lem_boun}
Suppose that the conditions listed in Lemma \ref{lem_bas} hold.
Then
     \begin{equation}\label{est3}
   C_1^{-1}\leq v(\tau,z)\leq C_1\quad {\rm for\ all}\ (\tau,z)\in\Omega_T.
  \end{equation}
\end{lemma}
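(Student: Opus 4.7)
The plan is to pin $v(\tau,z)$ between two positive constants by reading off bounds from the representation \eqref{v_form}. Fix $(\tau,z)\in\Omega_T$ and specialize to $y=z\in I_z(\tau)$. It suffices to establish uniform two-sided bounds on the factors $B_z(t,y)$ and $A_z(t)$ for $(t,y)\in[0,\tau]\times I_z(\tau)$, together with a uniform $L^1_s$-bound on $\theta(s,y)$. I would adapt Jiang's scheme \cite{J99MR1671920,J02MR1912419} for fixed domains to the moving boundary, handling the latter by observing that $u_-\leq 0$ forces $Y(s)\leq Y(t)$ for $s\leq t$, so that the localization $I_z(\tau)\subset(Y(s),\infty)$ remains valid throughout $[0,\tau]$.

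The bound on $B_z$ is the easier part. Since $\varphi_z$ is supported in the length-$6$ interval $[[z]-1,[z]+5]$, the exponent in \eqref{B_z} is controlled by $\|u(t)-u_0\|_{L^1([[z]-1,[z]+5])}$; Cauchy--Schwarz together with the local $L^\infty_t L^2_y$-control of $\psi=u-\hat u$ coming from \eqref{est1a} and the boundedness of $\hat u$ gives uniformity in $t$. Combined with $v_0\in L^\infty$ bounded below (via \eqref{Thm2b} and the Sobolev embedding $H^1\hookrightarrow L^\infty$), one obtains $C^{-1}\leq B_z(t,y)\leq C$. The main obstacle is $A_z$: one must control $\int_0^t\int_{[z]+4}^{[z]+5}(\mu u_y/v-R\theta/v)\,\mathrm d\xi\,\mathrm ds$ without any a priori bound on $v$. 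Using $v_t=u_y$, the viscous piece telescopes to $\mu\int_{[z]+4}^{[z]+5}\ln(v(t,\xi)/v_0(\xi))\,\mathrm d\xi$, which Jensen's inequality applied to $\ln$, combined with the uniform bound $\int_{\Omega_{[z]+4}(t)}v\leq C_0$ from Lemma \ref{lem_v1}, handles from above. For the pressure piece $-R\int_0^t\int_{[z]+4}^{[z]+5}\theta/v\,\mathrm d\xi\,\mathrm ds$, I would use the good point $b:=b_{[z]+4}(s,t)$ of Lemma \ref{lem_v1} to write $\theta(s,\xi)=\theta(s,b)+\int_b^\xi\theta_y(s,\eta)\,\mathrm d\eta$, bound the first term by $C_0$ and the second by Cauchy--Schwarz against the dissipation $\int_0^T\int\vartheta_y^2/\theta^2\lesssim 1$ from \eqref{est1a}. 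The factor $1/v$ introduces a circular dependence on the desired lower bound of $v$, which I would close by a continuation argument on $T^*:=\sup\{t\in[0,\tau]\colon v\geq C^{-1}\text{ on }[0,t]\times I_z(\tau)\}$.

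Once $C^{-1}\leq B_z(t,y)A_z(t)\leq C$ is secured, the representation \eqref{v_form} immediately yields $v(\tau,z)\geq C^{-1}$ and $v(\tau,z)\leq C+C'\int_0^\tau\theta(s,z)\,\mathrm ds$. The remaining time integral at the fixed point $z$ is closed by first bounding $\int_0^\tau\int_{\Omega_{[z]+1}(\tau)}\theta\,\mathrm dy\,\mathrm ds$ via Lemma \ref{lem_v1} and then upgrading to a pointwise-in-$y$ estimate through Sobolev embedding on the unit interval $\Omega_{[z]+1}(\tau)$, using the dissipation bound for $\vartheta_y/\theta$ from \eqref{est1a}. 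Since $(\tau,z)\in\Omega_T$ is arbitrary, \eqref{est3} follows. The principal difficulty throughout is the circular dependence in the lower bound of $A_z$, which the continuation argument, enabled by the monotonicity of $Y$, resolves.
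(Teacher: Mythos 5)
Your decomposition into bounds on $B_z$ and $A_z$ via the representation \eqref{v_form} matches the paper's framework, and your treatment of $B_z$ is fine. But the plan for $A_z$ contains a misconception that breaks both halves of the conclusion, and the closing argument does not close.

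The central error is the claim ``$C^{-1}\leq B_z(t,y)A_z(t)\leq C$.'' The lower bound is \emph{false}, and indeed must be, because $A_z(t)$ decays to zero exponentially. This is not a defect but the crucial mechanism: the paper establishes $A_z(t)\leq C\mathrm{e}^{-t/C}$ and $A_z(t)/A_z(s)\leq C\mathrm{e}^{-(t-s)/C}$ (see \eqref{est3.5}), obtained from a \emph{lower} bound on the space-average of $\theta/v$ over $[[z]+4,[z]+5]$ — Jensen's inequality for the convex function $1/x$ together with the $C_0$-bound on $\int_{\Omega_{[z]+4}(t)}v$ eliminates the $1/v$ factor (so your worry about circularity in $v$ and the proposed continuation argument on $T^*$ are unnecessary), while a Jensen-for-$\mathrm{e}^x$ estimate on $\int_s^t\theta(t',y(t'))\,\mathrm{d}t'$ at the infimum points $y(t')$ gives the linear-in-$(t-s)$ lower bound that forces exponential decay. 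Your proposal uses the good point $b_{[z]+4}(s,t)$ to bound $\theta$ from \emph{above}, which points in the wrong direction for this estimate. Without the decay of $A_z(t)/A_z(s)$, your proposed bound $v(\tau,z)\leq C+C'\int_0^\tau\theta(s,z)\,\mathrm{d}s$ cannot be closed: Lemma \ref{lem_v1} only gives $\int_{\Omega_i(t)}\theta(s,\cdot)\leq C_0$ per time slice, so $\int_0^\tau\int_{\Omega_{[z]+1}(\tau)}\theta\,\mathrm{d}y\,\mathrm{d}s$ grows like $C_0\tau$ and is not uniformly bounded; and the dissipation $\int_0^T\int\vartheta_y^2/\theta^2\lesssim 1$ controls oscillations, not the growing mean. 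In the paper the exponential weight $\mathrm{e}^{-(t-s)/C}$ turns the ``mean'' part of $\theta(s,y)$ into a bounded convolution and leaves only a Gronwall-integrable coefficient coming from the dissipation, which is what makes \eqref{est3.8a}--\eqref{est3.8b} work.

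The lower bound is also not an immediate corollary of a (false) pointwise lower bound on $B_zA_z$. Since that product vanishes as $t\to\infty$, the paper instead integrates \eqref{v_form} over $I_z(\tau)$, uses Lemma \ref{lem_v1} to get $\int_0^tA_z(t)/A_z(s)\,\mathrm{d}s\gtrsim1-C\mathrm{e}^{-t/C}$, and combines this with the already-established upper bound on $v$ and the $\theta$-estimate to deduce $v\gtrsim1$ only for $t\geq T_0$; the short-time regime is handled separately with a crude bound $v\gtrsim\mathrm{e}^{-Ct}$. Your outline contains none of this. In short: the missing idea is that $A_z$ decays exponentially rather than being bounded below, and this decay is indispensable for both the upper and the lower bound on $v$.
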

\begin{proof}
Let $(\tau,z)\in\Omega_T$ be arbitrary but fixed.
The proof is divided into three steps.

\noindent{\bf Step 1}.
It follows from  Cauchy's inequality and \eqref{est1a} that
\begin{equation}\label{est3.1}
  B_z(t,y)\sim1 \quad {\rm for\ all}\ (t,y)\in[0,\tau]\times I_z(\tau).
\end{equation}
Let $0\leq s\leq t\leq \tau$.
For each $0\leq t' \leq t$, there exists $y(t')\in\left[[z]+4,[z]+5\right]$
such that
\begin{equation*} \label{pro12}
  \theta(t',y(t' ))=\inf_{([z]+4,[z]+5)}\theta(t' ,\cdot),
\end{equation*}
Apply Cauchy's inequality to have
\begin{equation*}
  \frac{\mu u_y}{v}-P\leq
  \frac{Cu_y^2}{v\theta}-\frac{R\theta}{2v}
  \leq \frac{C\psi_y^2}{v\theta}
        +\frac{C\hat{u}_y^2}{v\theta}-\frac{R\theta}{2v}.
\end{equation*}
In view of \eqref{decay1}, \eqref{SRW_p3},
\eqref{est1a}, \eqref{asmp1} and \eqref{bound1},
we apply Jensen's inequality
for the convex function $1/x$ to deduce
\begin{equation}\label{est3.2}
    \begin{aligned}
      &\int_s^{t}
    \int_{[z]+4}^{[z]+5}\left[\frac{\mu u_y}{v}-P\right]  \\
       &\quad \leq C+C N   m_2^{-1}(\tilde{\delta}+\bar{\delta}) ({t}-s)
         -\frac{R}{2}\int_s^{t}  \theta(t',y(t'))
        \int_{[z]+4}^{[z]+5}v^{-1}(t',y)\mathrm{d}y\mathrm{d}t'    \\
       &\quad \leq C+C N   m_2^{-1}(\tilde{\delta}+\bar{\delta}) ({t}-s)
         -\frac{R}{2}\int_s^{t}  \theta(t',y(t'))
        \left[\int_{[z]+4}^{[z]+5}v\mathrm{d}y\right]^{-1}\mathrm{d}t'\\
      &\quad  \leq  C+C\epsilon_0 (t-s)
        -C^{-1}\int_s^{t} \theta(t',y(t')) \mathrm{d}t',
    \end{aligned}
  \end{equation}
Since $[z]+4\geq [Y(t')]+2$, we derive
from  \eqref{Omegai} that
$
  \Omega_{[z]+4}(t')=\left[[z]+4,[z]+5\right].
$
We then apply H\"{o}lder's and Cauchy's inequalities to obtain from
\eqref{decay1}, Lemma \ref{lem_v1} and \eqref{est1a} that
\begin{equation}\label{est3.3}
\begin{split}
  &\left|\int_s^{t}\int_{b_{[z]+4}(t',t')}^{y(t')}
\frac{\theta_y}{\theta}(t',\xi)\mathrm{d}\xi\mathrm{d}t'
\right|\\ &\quad
   \leq
\int_s^{t}\int_{\Omega_{[z]+4}(t')}
\left|\frac{\hat\theta_y}{\theta}(t',\xi)+
\frac{\vartheta_y}{\theta}(t' ,\xi)\right|\mathrm{d}\xi\mathrm{d}t'  \\ &\quad
 \leq
m_2^{-1}(\tilde{\delta}+\bar{\delta})({t}-s)
+\int_s^{t}\left|\int_{Y(t')}^{\infty}
\frac{\vartheta_y^2}{v\theta^2}(t',\xi)\mathrm{d}\xi \right|^{\frac{1}{2}}
\left|\int_{\Omega_{[z]+4}(t')} v (t',\xi)\mathrm{d}\xi\right|
^{\frac{1}{2}}\mathrm{d}t'\\ &\quad
 \leq  C (t-s) +C \int_s^t\int_{Y(t')}^{\infty}
\frac{\vartheta_y^2}{v\theta^2}(t',\xi)\mathrm{d}\xi\mathrm{d}t'   \\ &\quad
 \leq  C (t-s) +C.
\end{split}
\end{equation}
Applying Jensen's inequality to the convex function $e^{x}$,
we have from 
\eqref{bound2} and \eqref{est3.3} that
\begin{equation*}\label{pro11}
  \begin{split}
    &\int_s^{t}\theta(t' ,y(t' ))\mathrm{d}t'
    =\int_s^{t}\exp\left(\ln\theta(t' ,y(t' ))\right)\mathrm{d}t' \\
    &\quad\geq  ({t}-s)\exp\left(\frac{1}{{t}-s}\int_s^{t}
    \ln\theta(t' ,y(t ' )) \mathrm{d}t' \right)\\
    &\quad\geq  ({t}-s)\exp\left(\frac{1}{{t}-s}\int_s^{t}
\left[\int_{b_{[z]+4}(t ',t' )}^{y(t'  )}
\frac{\theta_y}{\theta}(t' ,\xi)\mathrm{d}\xi+
\ln\theta(t' ,b_{[z]+4}(t' ,t '))\right]\mathrm{d}t'\right)\\
&\quad\geq  ({t}-s)\exp\left(-\ln C_0
-\frac{1}{{t}-s}\left|\int_s^{t}\int_{b_{[z]+4}(t ',t' )}^{y(t' )}
\frac{\theta_y}{\theta}(t ',\xi)\mathrm{d}\xi \mathrm{d}t'
\right|\right)\\
&\quad\geq  \frac{{t}-s}{C}\exp\left({-\frac{C}{ {t}-s }}\right).
  \end{split}
\end{equation*}
This implies
\begin{equation}\label{est3.4}
  -\int_s^{t} \theta(t' ,y(t' ))\mathrm{d}t'
  \leq\left\{
  \begin{aligned}
    &0  \quad& &{\rm if }\ 0\leq {t}-s\leq 1,\\
    &-C^{-1}({t}-s)\quad& &{\rm if }\ {t}-s\geq 1.
  \end{aligned}\right.
\end{equation}
Plugging \eqref{est3.4} into \eqref{est3.2} and taking $\epsilon_0>0$ small enough,
we have  for each $s\in[0,{t}]$ that
\begin{equation*}
 \int_s^{t}
    \int_{[z]+4}^{[z]+5}\left[\frac{\mu u_y}{v}-P\right]
    \leq C-C^{-1}({t}-s).
\end{equation*}
According to the definition \eqref{A_z}, we then obtain
\begin{equation}\label{est3.5}
    0\leq A_z({t})\leq C\mathrm{e}^{-{t}/C},\quad
    \frac{A_z(t)}{A_z(s)}\leq C\mathrm{e}^{-({t}-s)/{C}}
    \quad { \rm  for\ all}\ 0\leq s\leq {t}\leq \tau.
  \end{equation}

  \noindent{\bf Step 2}.
   Plugging \eqref{est3.1} and \eqref{est3.5} into \eqref{v_form}, we infer
   that for all $(t,y)\in [0,\tau]\times I_z(\tau)$,
   \begin{equation}\label{est3.6}
     \int_0^t\frac{A_z(t)}{A_z(s)}\theta(s,y)\mathrm{d}s\lesssim
     v(t,y)\lesssim 1+\int_0^t\theta(s,y)\mathrm{e}^{-\frac{t-s}C}\mathrm{d}s.
   \end{equation}
In light of the fundamental theorem of calculus, we deduce
from  \eqref{decay1} and \eqref{bound1} that for
 $y\in I_z(\tau)$ and $0\leq s\leq t\leq \tau$,
     \begin{equation} \label{est3.7a}
  \begin{aligned}
    &\left|\theta(s,y)^{\frac{1}{2}}-\theta(s,b_{[z]+2}(s,\tau))^{\frac{1}{2}}
    \right| \\ &\quad
    \lesssim  \int_{I_z(\tau)}\theta^{-\frac12}\left|\hat{\theta}_y\right| (s,\xi)\mathrm{d}\xi+
    \int_{I_z(\tau)}\theta^{-\frac12}|\vartheta_y|(s,\xi)\mathrm{d}\xi
    \\ &\quad
    \lesssim  m_2^{-\frac{1}{2}}(\tilde{\delta}+\bar{\delta})+
    \left[\int_{I_z(\tau)}\frac{\vartheta_y^2}{v\theta^2}
    (s,\xi)\mathrm{d}\xi\right]^{\frac{1}{2}}
    \left[\int_{I_z(\tau)}{v\theta}(s,\xi)\mathrm{d}\xi\right]^{\frac{1}{2}}
    \\ &\quad
    \lesssim   m_2^{-\frac{1}{2}}(\tilde{\delta}+\bar{\delta})+
    \sup_{I_z(\tau)}v^{\frac12}(s,\cdot)
    \left[\int_{I_z(\tau)}\frac{\vartheta_y^2}{v\theta^2}(s,\xi)
    \mathrm{d}\xi\right]^{\frac{1}{2}},
  \end{aligned}
  \end{equation}
  where we have used
  $b_{[z]+2}(s,\tau)\in \Omega_{[z]+2}(\tau)\subset I_z(\tau)$.
   Combine \eqref{est3.7a} with \eqref{bound2} and \eqref{asmp1} to give
  \begin{equation}\label{est3.7b}
    1-C
    \sup_{I_z(\tau)}v(s,\cdot)
    \int_{I_z(\tau)}\frac{\vartheta_y^2}{v\theta^2}(s,\xi)\mathrm{d}\xi
    \lesssim \theta(s,y)\lesssim
    1+\sup_{I_z(\tau)}v(s,\cdot)
    \int_{I_z(\tau)}\frac{\vartheta_y^2}{v\theta^2}(s,\xi)\mathrm{d}\xi.
  \end{equation}
  We plug \eqref{est3.7b} into \eqref{est3.6} to obtain
  \begin{equation*}
  \begin{aligned}
    v(t,y)\lesssim 1+\int_0^t\sup_{I_z(\tau)}v(s,\cdot)\int_{I_z(\tau)}
    \frac{\vartheta_y^2}{v\theta^2}(s,\xi)\mathrm{d}\xi\mathrm{d}s.
  \end{aligned}
  \end{equation*}
  Taking the supremum over $I_z(\tau)$ with respect to $y$, we have
 \begin{equation} \label{est3.8a}
  \sup_{I_z(\tau)}v(t,\cdot)\lesssim 1+\int_0^t
  \sup_{I_z(\tau)}v(s,\cdot)\int_{\Omega_i(\tau)}
    \frac{\vartheta_y^2}{v\theta^2}(s,\xi)\mathrm{d}\xi\mathrm{d}s.
 \end{equation}
 Applying Gronwall's inequality to \eqref{est3.8a},
 we can deduce from \eqref{est1a}
 that
 \begin{equation}\label{est3.8b}
  \sup_{I_z(\tau)} v (t,\cdot)\leq C_1\quad { \rm  for\ all}\  t\in[0,\tau],
 \end{equation}
 where
 $C_1>0$ is some constant independent of $t$, $\tau$ and $z$.
 Noting that
 $z\in I_z(\tau)$, we deduce from \eqref{est3.8b} that
 $v(\tau,z)\leq C_1$. Since
 $(\tau,z)\in \Omega_T$ is arbitrary, we conclude
 \begin{equation}\label{est3.8c}
      v (\tau,z)\leq C_1 \quad { \rm  for\ all}\  (\tau,z)\in\Omega_T.
  \end{equation}

\noindent{\bf Step 3}.
On the other hand, in view of \eqref{bound1}, \eqref{est3.1} and \eqref{est3.5},
we integrate \eqref{v_form} on $I_z(\tau)$
with respect to $y$ to find
  \begin{equation*}
    \begin{aligned}
      1\lesssim\int_{I_z(\tau)}v(t,y)\mathrm{d}y\lesssim \mathrm{e}^{-t/C}
      +\int_0^t\frac{A_z(t)}{A_z(s)}\mathrm{d}s.
    \end{aligned}
  \end{equation*}
Consequently, we have
\begin{equation}\label{est3.9}
  \int_0^t\frac{A_z(t)}{A_z(s)}\mathrm{d}s\gtrsim 1-C\mathrm{e}^{-{t}/{C }}.
\end{equation}
Inserting \eqref{est3.7b}, \eqref{est3.8c} and
\eqref{est3.9} into \eqref{est3.6}, we have
  \begin{equation}\label{est3.10a}
  \begin{aligned}
    v(t,y)
    \gtrsim~& \int_{0}^t\frac{A_z(t)}{A_z(s)}\mathrm{d}s
    -C \int_{0}^t\frac{A_z(t)}{A_z(s)}\int_{I_z(\tau)}
\frac{\vartheta_y^2}{v\theta^2}\mathrm{d}\xi\mathrm{d}s\\
    \gtrsim~& 1-C \mathrm{e}^{-{t}/{C }}-
    C \left(\int_{0}^{{t}/2}+\int_{{t}/2}^t\right)
    \frac{A_z(t)}{A_z(s)}\int_{I_z(\tau)}
    \frac{\vartheta_y^2}{v\theta^2}\mathrm{d}\xi\mathrm{d}s
    \\
    \gtrsim~& 1-C \mathrm{e}^{-t/C }
    -C \int_{0}^{{t}/2}\mathrm{e}^{-\frac{t-s}{C}}
    \int_{I_z(\tau)}\frac{\vartheta_y^2}{v\theta^2}\mathrm{d}\xi\mathrm{d}s
    -C \int_{{t}/2}^t\int_{I_z(\tau)}\frac{\vartheta_y^2}{v\theta^2}\\
    \gtrsim~& 1-C \mathrm{e}^{-t/C }
    -C \mathrm{e}^{-\frac{t}{2C}}
    -C \int_{{t}/2}^t\int_{I_z(\tau)}\frac{\vartheta_y^2}{v\theta^2}\\
    \gtrsim~& 1  \quad {\rm for\ all}\ (t,y)\in [T_0,\tau]\times I_z(\tau),
  \end{aligned}
\end{equation}
where $T_0$ is a positive constant independent of $t$.
In particular, the estimate \eqref{est3.10a} implies
\begin{equation}\label{est3.10b}
  v(\tau,z)\gtrsim 1 \quad {\rm for\ all}\ \tau\geq T_0,\ z>Y(\tau).
\end{equation}
As in \cite{K82MR651877,KS77MR0468593}, we can derive a positive lower bound for $v$,
that is,
  \begin{equation}\label{est3.10c}
    v(\tau,z)\gtrsim \mathrm{e}^{ -Ct}  \quad {\rm for}\ (\tau,z) \in \Omega_T.
  \end{equation}
Finally, we combine \eqref{est3.10c},
\eqref{est3.8c} and \eqref{est3.10b} to get \eqref{est3}.
This completes the proof.
\end{proof}

As a corollary of Lemma \ref{lem_boun}, we obtain the bounds for the density
$\rho(t,x)$ uniformly in time $t$ and space $x$.
\begin{corollary}
Suppose that the conditions listed in Lemma \ref{lem_bas} hold.  Then
  \begin{equation}\label{bound_rho}
   C_1^{-1}\leq \rho(t,x)\leq C_1\quad {\rm for\ all}\ (t,x)\in[0,T]\times\mathbb{R}_+,
  \end{equation}
   where the positive constant $C_1$ depends solely
on $\inf_{x\in\mathbb{R}_+}\{\rho_0(x),\theta_0(x)\}$
  and $\|(\phi_0,\psi_0,\vartheta_0)\|_1$.
\end{corollary}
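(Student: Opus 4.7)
The plan is to transfer the uniform bounds on the specific volume $v$, established in Lemma \ref{lem_boun} on the Lagrangian domain $\Omega_T$, back to the original Eulerian domain $[0,T]\times\mathbb{R}_+$ via the coordinate change \eqref{y}. The identity $v=1/\rho$ makes the bounds on $\rho$ immediate once we verify that the transformation $(t,x)\mapsto(t,y)$ is a well-defined bijection between $[0,T]\times\mathbb{R}_+$ and $\Omega_T$ and that every point $(t,x)$ corresponds to a unique point $(t,y)$ in that set.

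First I would fix $t\in[0,T]$ and observe that \eqref{y} defines a smooth map
\begin{equation*}
y(t,x) = -u_-\int_0^t\rho(s,0)\,\mathrm{d}s + \int_0^x\rho(t,z)\,\mathrm{d}z,
\end{equation*}
whose $x$-derivative equals $\rho(t,x)$. Since the solution $(\rho,u,\theta)\in X(0,T;m_1,m_2,N)$ has $\rho(t,\cdot)\ge m_1>0$, the map $x\mapsto y(t,x)$ is strictly increasing. At $x=0$ it equals $Y(t)=-u_-\int_0^t\rho(s,0)\,\mathrm{d}s$ (the Lagrangian boundary introduced after \eqref{NS_L}), and as $x\to\infty$ the integral $\int_0^x\rho(t,z)\,\mathrm{d}z\to\infty$ because $\rho$ is bounded below by $m_1$. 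Hence $x\mapsto y(t,x)$ is a $C^1$ bijection from $\mathbb{R}_+$ onto $(Y(t),\infty)$, and the image of $[0,T]\times\mathbb{R}_+$ is exactly $\Omega_T$.

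Next, given an arbitrary $(t,x)\in[0,T]\times\mathbb{R}_+$, let $z=y(t,x)\in(Y(t),\infty)$, so $(t,z)\in\Omega_T$. Under the transformation, the specific volume $v$ in Lagrangian variables satisfies $v(t,z)=1/\rho(t,x)$. Lemma \ref{lem_boun} therefore gives
\begin{equation*}
C_1^{-1}\le \frac{1}{\rho(t,x)} \le C_1,
\end{equation*}
which upon reciprocating yields $C_1^{-1}\le\rho(t,x)\le C_1$ (after harmlessly replacing $C_1$ by $\max\{C_1,C_1\}=C_1$). Since $(t,x)$ was arbitrary and the constant $C_1$ from Lemma \ref{lem_boun} depends solely on $\inf_{x\in\mathbb{R}_+}\{\rho_0,\theta_0\}$ and $\|(\phi_0,\psi_0,\vartheta_0)\|_1$, the same dependence is inherited here, completing the proof.

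There is no real obstacle: this corollary is a pure change-of-variables statement and the difficulty lay in proving Lemma \ref{lem_boun}. The only thing worth double-checking is that the boundary point $x=0$ in Eulerian coordinates corresponds to the moving boundary $y=Y(t)$ in Lagrangian coordinates, which is built into the definition \eqref{y}, and that the a priori lower bound $\rho\ge m_1$ used to invert the transformation is consistent with the conclusion (i.e.\ we are not reasoning circularly). This consistency is fine because Lemma \ref{lem_boun} has been established under the standing a priori assumption \eqref{apriori}, and the improved uniform constant $C_1$ produced here will later be used in the continuation argument of Subsection \ref{sec_proof} to close the a priori estimates independently of $m_1$.
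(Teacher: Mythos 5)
Your argument is correct and matches the paper's (implicit) reasoning exactly: the paper states this corollary without proof, treating it as an immediate consequence of Lemma \ref{lem_boun} via $v=1/\rho$ and the Lagrangian change of variables \eqref{y}, and your write-up simply spells out the bijectivity of $x\mapsto y(t,x)$ and the reciprocal-bound observation that the paper leaves to the reader. The consistency remark at the end — that the a priori lower bound $\rho\ge m_1$ used to set up the transformation does not create a circular argument, since the improved constant $C_1$ is independent of $m_1$ and feeds into the continuation argument — is an accurate reading of the paper's structure.
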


\subsection{Uniform estimates for the perturbation} \label{sec_norm}
In this subsection, we will estimate the $H^1_x$-norm of
the perturbation $(\phi,\vartheta,\psi)(t,x)$ uniformly in time $t$.
First we can get the following uniform $L^2$-norm estimate.
\begin{lemma}
  \label{lem_the1}
Suppose that the conditions listed in Lemma \ref{lem_bas} hold.  Then
\begin{equation}\label{est4}
    \sup_{0\leq t\leq T} \left\|(\phi,\vartheta,\psi)(t)\right\|^2
    +\int_0^T\int_{\mathbb{R}_+}\left[(1+\theta+\psi^2)\psi_x^2+\vartheta_x^2\right]
    \mathrm{d}x\mathrm{d}t
    \lesssim 1.
  \end{equation}
\end{lemma}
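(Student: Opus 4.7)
The plan is to build on the basic energy estimate (Lemma~\ref{lem_bas}) and the uniform density bound \eqref{bound_rho}. Since $\rho\sim 1$ uniformly on $[0,T]\times\mathbb{R}_+$, the inequality \eqref{Phi1} gives $R\hat\theta\,\Phi(\hat\rho/\rho)\gtrsim\phi^2$, so \eqref{est1} immediately yields $\|\phi(t)\|^2+\|\psi(t)\|^2\lesssim 1$ uniformly in $t\in[0,T]$. It remains to (a) establish a bound on $\|\vartheta(t)\|^2$, and (b) upgrade the dissipation $\int(\psi_x^2/\theta+\vartheta_x^2/\theta^2)$ supplied by \eqref{est1} to the weighted form $\int[(1+\theta+\psi^2)\psi_x^2+\vartheta_x^2]$.

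For (a), I would multiply the perturbed energy equation $\eqref{per1}_3$ by $\vartheta$ and integrate over $\mathbb{R}_+$. Since $\vartheta(t,0)=0$ the boundary terms vanish, and using the continuity equation $\rho_t+(\rho u)_x=0$ to put the transport part in divergence form yields an identity of the type
\begin{equation*}
\frac{d}{dt}\int_{\mathbb{R}_+}\!\frac{c_v\rho\vartheta^2}{2}\,dx+\kappa\int_{\mathbb{R}_+}\vartheta_x^2\,dx = \int_{\mathbb{R}_+}\!\bigl[-R\rho\theta\psi_x\vartheta+\mu\psi_x^2\vartheta+f_3\vartheta\bigr]\,dx.
\end{equation*}
Each term on the right is treated by Cauchy--Schwarz: one splits the $\vartheta$-factor via the one-dimensional estimate $\|\vartheta\|_{L^\infty}^2\lesssim\|\vartheta\|\|\vartheta_x\|$ (valid because $\vartheta(t,0)=0$), absorbs one part into $\kappa\int\vartheta_x^2$, and bounds the remaining parts by the quantities already controlled by \eqref{est1} together with the $L^2$ estimates on $\phi,\psi$. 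The source $\int f_3\vartheta$ is dominated using the pointwise and $L^p$ decay of $\bar U$ and $\tilde U$ from \eqref{SRW_p3}--\eqref{SRW_p4} and \eqref{decay1}, in the same spirit as in the proof of Lemma~\ref{lem_bas}.

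For (b), I would multiply the momentum equation $\eqref{per1}_2$ successively by $\psi$, by $\theta\psi$, and by $\psi^3$. After integrating by parts and using $\psi(t,0)=0$, the viscous term $\mu\psi_{xx}$ produces, respectively, $\mu\psi_x^2$, $\mu\theta\psi_x^2$ (with a harmless mixed term $\mu\psi_x\psi\theta_x$ absorbed via Cauchy), and $3\mu\psi^2\psi_x^2$, all with the correct sign on the left. The convective and pressure contributions yield factors of the form $(\rho\theta-\hat\rho\hat\theta)\psi_x$ multiplied by the corresponding weight; these are controlled by Cauchy--Schwarz using the bounds from step (a), the decay estimates for the asymptotic profiles, and the smallness \eqref{asmp1}. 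The time derivatives generated by the weighted multipliers produce nonnegative quantities of the type $\int\rho\theta\psi^2$ and $\int\rho\psi^4/4$, whose time evolution furnishes the required $L^\infty_tL^2_x$ bounds as byproducts.

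The hardest part will be the cross-coupling terms of the form $\int\theta\psi_x\vartheta$ and $\int\psi_x^2\vartheta$, which involve the full temperature $\theta=\hat\theta+\vartheta$. I would handle them by decomposing $\theta=\hat\theta+\vartheta$ everywhere and bounding the $\vartheta$-part via $\|\vartheta\|_{L^\infty}^2\lesssim\|\vartheta\|\|\vartheta_x\|$ together with the smallness factor $\Xi(m_1,m_2,N)(\bar\delta+\tilde\delta)\leq\epsilon_0$ from \eqref{asmp1}; this is precisely where the constant in \eqref{est4} is forced to be independent of $N,m_1,m_2$. Integrating the combined differential inequality on $[0,T]$, choosing the various $\epsilon$'s small and using \eqref{asmp1} to absorb residual terms back into the left-hand side, will then close \eqref{est4}.
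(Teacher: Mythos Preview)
Your outline has a genuine gap at the cross-coupling step. The terms $\mu\!\int\psi_x^{2}\vartheta$ and $R\!\int\rho\theta\psi_x\vartheta$ that arise when you test $\eqref{per1}_3$ with $\vartheta$ are intrinsic nonlinear terms; they carry no factor of $\bar\delta$ or $\tilde\delta$, so the smallness assumption \eqref{asmp1} gives you nothing to absorb them with. The basic estimate \eqref{est1} only controls $\int_0^T\!\int\psi_x^{2}/\theta$ and $\int_0^T\!\int\vartheta_x^{2}/\theta^{2}$; since a priori $\theta\lesssim N$, this yields at best $\int_0^T\|\psi_x\|^{2}\lesssim N$, not $\lesssim 1$, and your Sobolev splitting $\|\vartheta\|_{L^\infty}^{2}\lesssim\|\vartheta\|\|\vartheta_x\|$ then leaves powers of $N$ in the residual. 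Likewise, $\rho\mathcal{E}$ only dominates $\vartheta^{2}/(1+\theta)$, so you do not yet know $\|\vartheta(t)\|^{2}\lesssim 1$ either. The loop does not close.

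The paper avoids this by working with the truncated test function $(\vartheta-2)_+$ rather than $\vartheta$. On the set $\{\vartheta\le 2\}$ the temperature is bounded, so \eqref{est1} already yields $\int\psi_x^{2}+\int\vartheta_x^{2}\lesssim 1$ there. On $\{\vartheta>2\}$, the key observation is algebraic: testing $\eqref{per1}_3$ with $(\vartheta-2)_+$ puts $\mu\!\int\psi_x^{2}(\vartheta-2)_+$ on the \emph{right} with a bad sign, but testing $\eqref{per1}_2$ with $2\psi(\vartheta-2)_+$ produces the same term on the \emph{left} with coefficient $2\mu$; their sum, identity \eqref{id4.3}, therefore has $\mu\!\int\psi_x^{2}(\vartheta-2)_+$ as good dissipation. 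The remaining right-hand terms are then bounded by $\int_0^T\sup_{\mathbb{R}_+}(\vartheta-1)_+^{2}$ plus $\int_0^T\!\int\psi^{2}\psi_x^{2}$, the former controlled via $(\vartheta-1)_+^{2}\le\int_{\{\vartheta>1\}}\theta\cdot\int\vartheta_x^{2}/\theta$ and \eqref{est1}, the latter via the $\psi^{3}$ multiplier exactly as you propose. This truncation-and-combination device is the missing idea; once you insert it, the rest of your plan (including the $\psi^{3}$ step and the treatment of $f_2,f_3$) goes through essentially as written.
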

\begin{proof}
  We divide the proof into five steps.

  \noindent{\bf Step 1}. First, for each $t\geq 0$ and $a>0$, we denote
  $$
  \Omega'_a(t):=\{x\in\mathbb{R}_+: \vartheta(t,x)>a\}.
  $$
  Then it follows from \eqref{est1} and \eqref{bound_rho} that
  \begin{equation}\label{est4.1}
  \begin{aligned}
    &\sup_{0\leq t\leq T}\left[\int_{\mathbb{R}_+}\phi^2\mathrm{d}x
    +\int_{\mathbb{R}_+\backslash\Omega'_a(t)}\vartheta^2\mathrm{d}x
    +\int_0^t|\phi(s,0)|^2 \mathrm{d}s\right]
    +\int_0^T\int_{\mathbb{R}_+\backslash\Omega'_a(t)}
    \left[\psi_x^2+\vartheta_x^2\right]\\[0.5mm]
      \leq~& C(a)\sup_{0\leq t\leq T}\left[\int_{\mathbb{R}_+}\rho\mathcal{E}\mathrm{d}x
    +\int_0^t\rho\Phi\left(\frac{\hat{\rho}}{\rho}\right)(s,0)\mathrm{d}s \right]
    +C(a)\int_0^T\int_{\mathbb{R}_+}\left[\frac{\psi_x^2}{\theta}
    +\frac{\vartheta_x^2}{\theta^2}\right]\leq C(a).
    \end{aligned}
  \end{equation}

  \noindent{\bf Step 2}. We now estimate the integral $\int_0^T\int_{\Omega'_a(t)}\vartheta_x^2$.
  To this end, we multiply $\eqref{per1}_3$
  by $(\vartheta-2)_+:=\max\{\vartheta-2,0\}$ and integrate the resulting identity
  over $(0,t)\times \mathbb{R}_+$ to obtain
  \begin{equation}\label{id4.1}
    \begin{aligned}
    &\frac{c_v}{2}\int_{\mathbb{R}_+}\rho(\vartheta-2)_+^2\mathrm{d}x
    +\kappa\int_0^t\int_{\Omega'_2(s)}\vartheta_x^2
    +R\int_0^t\int_{\mathbb{R}_+}\rho\theta\psi_x(\vartheta-2)_+\\[0.5mm]
   &\quad =\frac{c_v}{2}\int_{\mathbb{R}_+}\rho_0(\vartheta_0-2)_+^2\mathrm{d}x
    +\int_0^t\int_{\mathbb{R}_+}f_3(\vartheta-2)_+
    +\mu\int_0^t\int_{\mathbb{R}_+}\psi_x^2(\vartheta-2)_+.
  \end{aligned}
  \end{equation}
  To estimate the last term of \eqref{id4.1}, we multiply $\eqref{per1}_2$
  by $2\psi(\vartheta-2)_+$ and integrate the resulting identity
  over $(0,t)\times\mathbb{R}_+$ to find
  \begin{equation}\label{id4.2}
    \begin{aligned}
      &\int_{\mathbb{R}_+}\psi^2\rho(\vartheta-2)_+\mathrm{d}x
      +2\mu\int_0^t\int_{\mathbb{R}_+}\psi_x^2(\vartheta-2)_+
      -\int_0^t\int_{\Omega'_2(s)}\rho\psi^2(\vartheta_t+u\vartheta_x)\\[0.5mm]
      &\quad =\int_{\mathbb{R}_+}\psi_0^2\rho_0(\vartheta_0-2)_+\mathrm{d}x
      +2R\int_0^t\int_{\mathbb{R}_+}\rho\theta\psi_x(\vartheta-2)_+
      +2R\int_0^t\int_{\Omega'_2(s)}\rho\theta\psi\vartheta_x\\[0.5mm]
      &\qquad +2R\int_0^t\int_{\mathbb{R}_+}\psi(\hat{\rho}
      \hat{\theta})_x(\vartheta-2)_+
      -2\mu\int_0^t\int_{\Omega'_2(s)}\psi\psi_x\vartheta_x
      +2\int_0^t\int_{\mathbb{R}_+}f_2\psi(\vartheta-2)_+.
    \end{aligned}
  \end{equation}
 Combining \eqref{id4.2} and \eqref{id4.1}, we have from $\eqref{per1}_3$
 that
 \begin{equation}\label{id4.3}
    \begin{aligned}
      &\int_{\mathbb{R}_+}\left[\frac{c_v}{2}\rho(\vartheta-2)_+^2
      +\psi^2\rho(\vartheta-2)_+\right]\mathrm{d}x
      +\kappa\int_0^t\int_{\Omega'_2(s)}\vartheta_x^2
      +\mu\int_0^t\int_{\mathbb{R}_+}\psi_x^2(\vartheta-2)_+\\
      &\quad=\int_{\mathbb{R}_+}\left[\frac{c_v}{2}\rho_0(\vartheta_0-2)_+^2
      +\psi_0^2\rho_0(\vartheta_0-2)_+\right]\mathrm{d}x
      +\sum_{p=1}^6 \mathcal{J}_p,
    \end{aligned}
  \end{equation}
where each term $\mathcal{J}_p$ in the decomposition will be defined below.
We now define and estimate all the terms in the decomposition.
We first consider
\begin{equation*}
  \mathcal{J}_1:=
  R\int_0^t\int_{\mathbb{R}_+}\rho\theta\psi_x(\vartheta-2)_+
\quad {\rm and}\quad
  \mathcal{J}_2:=
  2R\int_0^t\int_{\Omega'_2(s)}\rho\theta\psi\vartheta_x.
\end{equation*}
  In light of \eqref{bound_rho}
  and \eqref{est1}, we have
  \begin{equation}\label{est4.2}
    \int_{\mathbb{R}_+}\psi^2\mathrm{d}x+\int_{\Omega'_1(s)}\theta\mathrm{d}x
    \lesssim \int_{\mathbb{R}_+}\rho\mathcal{E}\mathrm{d}x\lesssim 1.
  \end{equation}
  From Cauchy's inequality and \eqref{bound_rho}, we obtain
  \begin{equation}\label{J1}
    \begin{aligned}
   |\mathcal{J}_1|\leq~& \epsilon \int_0^t\int_{\mathbb{R}_+}\psi_x^2(\vartheta-2)_+
   +C(\epsilon)\int_0^t\int_{\mathbb{R}_+}\theta^2(\vartheta-2)_+\\
   \leq~&\epsilon \int_0^t\int_{\mathbb{R}_+}\psi_x^2(\vartheta-2)_+
   +C(\epsilon)\int_0^t\int_{\mathbb{R}_+}\theta(\vartheta-1)_+^2\\
    \leq~&\epsilon \int_0^t\int_{\mathbb{R}_+}\psi_x^2(\vartheta-2)_+
  +C(\epsilon)\int_0^t\sup_{\mathbb{R}_+}(\vartheta-1)_+^2,
  \end{aligned}
  \end{equation}
  and
  \begin{equation}\label{J2}
    \begin{aligned}
     |\mathcal{J}_2|\leq~&\epsilon\int_0^t\int_{\Omega'_2(s)}\vartheta_x^2
     +C(\epsilon)\int_0^t\int_{\Omega'_2(s)}\psi^2\theta^2\\
     \leq~&\epsilon\int_0^t\int_{\Omega'_2(s)}\vartheta_x^2
     +C(\epsilon)\int_0^t\int_{\Omega'_2(s)}\psi^2\left(\vartheta-1\right)_+^2\\
     \leq~&\epsilon\int_0^t\int_{\Omega'_2(s)}\vartheta_x^2
     +C(\epsilon)\int_0^t\sup_{\mathbb{R}_+}(\vartheta-1)_+^2.\\
   \end{aligned}
  \end{equation}
  Here we have used the fact that
  \begin{equation*}
    \theta\leq K(\vartheta-1)
  \end{equation*}
  with $K=2+\sup_{\mathbb{R}_+^2}\hat{\theta}$
  provided that $\vartheta\geq 2$.
  Let us define
  \begin{equation*}
    \mathcal{J}_3:=
    \int_0^t\int_{\Omega'_2(s)}\left[f_3(\vartheta-2)_+
      +c_v^{-1}f_3\psi^2+2f_2\psi(\vartheta-2)_+\right].
  \end{equation*}
  According to \eqref{f2} and \eqref{f3},
  we use \eqref{f_hat1} and \eqref{bound_rho} to deduce
  \begin{equation}\label{f_23}
      \begin{aligned}
     |(f_2,f_3)|
     \lesssim G
     +\big|(\Psi,\Psi_x)\big|\big|(\bar{U}_{x},\tilde{U}_{x},\tilde{U}_{xx})\big|.
  \end{aligned}
  \end{equation}
  with
  \begin{equation}\label{G1}
    G:=\big|\bar{U}_{xx}\big|+\big|\bar{U}_{x}\big|^2
    +|\bar{U}_x||\tilde{U}-U_m|
  +|\tilde{U}_x||\bar{U}-U_m|+
  |\bar{U}_x||\tilde{U}_x|.
  \end{equation}
  Hence, we have
  \begin{equation}\label{J3a}
    \mathcal{J}_3
    \lesssim \int_0^t\int_{\Omega'_2(s)}
     \left[G
     +\big|(\Psi,\Psi_x)\big|\big|(\bar{U}_{x},\tilde{U}_{x},\tilde{U}_{xx})\big|\right]
     \left[m_2^{-1}(\vartheta-2)_+^2+\psi^2\right].
  \end{equation}
  It follows from  Lemma \ref{lem_SRW} and \eqref{point1} that
  \begin{equation}\label{G2}
     \big\|G (s)\big\|_{L^1}
      \lesssim \bar{\delta}^{\frac{1}{3}} (1+s)^{-\frac{2}{3}},
  \end{equation}
  from which we get
 \begin{equation}\label{J3a1}
   \begin{aligned}
      &\int_0^t\int_{\Omega'_2(s)}
     G
  \left[m_2^{-1}(\vartheta-2)_+^2+\psi^2\right]\\
  &\quad \lesssim
    \bar{\delta}^{\frac{1}{3}}\int_0^t (1+s)^{-\frac{2}{3}}
   \|\psi \|\|\psi_x\|
      +
       m_2^{-1}\bar{\delta}^{\frac{1}{3}}
       \int_0^t\sup_{\mathbb{R}_+}(\vartheta-1)_+^2 \\
  &\quad \lesssim
  1+     N^2\bar{\delta}^{\frac{2}{3}}\int_0^t \|\psi_x\|^2
      +
       m_2^{-1}\bar{\delta}^{\frac{1}{3}}
       \int_0^t\sup_{\mathbb{R}_+}(\vartheta-1)_+^2.
   \end{aligned}
 \end{equation}
Next we have from \eqref{apriori} and Lemma \ref{lem_SRW} that
 \begin{equation}\label{J3a2}
   \begin{aligned}
      &m_2^{-1}\int_0^t\int_{\Omega'_2(s)}
     \big|(\Psi,\Psi_x)\big|\big|(\bar{U}_{x},\tilde{U}_{x},\tilde{U}_{xx})\big|
     (\vartheta-2)_+^2\\
  &\quad \lesssim
     m_2^{-1}\int_0^t\int_{\Omega'_2(s)}
    \big|(\bar{U}_{x},\tilde{U}_{x},\tilde{U}_{xx})\big|
    \left[N^2 (\vartheta-2)_+^2
      +\big|\Psi_x\big|^2\right] \\
  &\quad \lesssim
       m_2^{-1}N^2(\bar{\delta}+\tilde{\delta})
       \int_0^t\Big[ \sup_{\mathbb{R}_+}(\vartheta-1)_+^2
       +\|\Psi_x \|^2\Big].
   \end{aligned}
 \end{equation}
Since
  \begin{equation} \label{point3}
      \big|(\Psi,\Psi_x)\big| \psi^2
  \lesssim
      \left(1+\big|\Psi\big|\right)\big|\Psi\big|^3
      +\big|\Psi_x\big|^2
      \lesssim N\big|\Psi\big|^3+\big|\Psi_x\big|^2,
  \end{equation}
we have
  \begin{equation}\label{J3a3}
    \begin{aligned}
      & \int_0^t\int_{\Omega'_2(s)}
      \big|(\Psi,\Psi_x)\big|\big|(\bar{U}_{x},\tilde{U}_{x},\tilde{U}_{xx})\big|
      \psi^2
      \\   &\quad \lesssim
       \int_0^t\int_{\Omega'_2(s)}
       \left[N\big|\Psi\big|^3\big|\bar{U}_{x}\big|
       +N^2
       \big|\Psi\big|^2
       \big|(\tilde{U}_{x},\tilde{U}_{xx})\big|\right]
       +(\bar{\delta}+\tilde{\delta})
       \int_0^t\|\Psi_x \|^2
      \\   &\quad \lesssim
      1+
      \left[\bar{\delta}^{\frac{2}{9}} N^{\frac{26}{3}}
      +\tilde{\delta}N^2\right]
       \int_0^t \|\Psi_x \|^2
       +\tilde{\delta}m_1^{-1}
      N^4 \int_0^t
      \rho\Phi\left(\frac{\hat{\rho}}{\rho}\right)(s,0)\mathrm{d}s.
    \end{aligned}
  \end{equation}
To derive the last inequality in \eqref{J3a2},
we have used \eqref{R4b}, Young's inequality and
  \begin{equation*}
    \big|\bar{U}_{x}\big|\big|\Psi\big|^3
    \lesssim \|\Psi\|_{L^{\infty}}^{\frac{3}{2}}
      \|\bar{U}_{x}\|_{L^{4}}
      \|\Psi\|^{\frac{3}{2}}
      \lesssim \bar{\delta}^{\frac{1}{12}}N^{\frac{9}{4}}
       (1+s)^{-\frac{11}{16}}\|\Psi_x\|^{\frac{3}{4}}.
  \end{equation*}
  Plugging  \eqref{J3a1}, \eqref{J3a2}  and
  \eqref{J3a3} into \eqref{J3a},
   we deduce from  \eqref{asmp1}-\eqref{est2} that
  \begin{equation}\label{J3b}
    \begin{aligned}
      |\mathcal{J}_3|
  \lesssim  1+\int_0^t\sup_{\mathbb{R}_+}(\vartheta-1)_+^2.
    \end{aligned}
  \end{equation}
  Let us now consider the term
  \begin{equation*}
    \mathcal{J}_4:=
    2R\int_0^t\int_{\mathbb{R}_+}\psi(\hat{\rho}\hat{\theta})_x(\vartheta-2)_+,
  \end{equation*}
  which combined with \eqref{apriori} and \eqref{asmp1} yields
  \begin{equation}\label{J4}
    |\mathcal{J}_4|\lesssim\int_0^t\sup_{\mathbb{R}_+}(\vartheta-2)_+
    \|\psi\|\|(\hat{\rho}_x,\hat{\theta}_x)\|
    \lesssim\int_0^t\sup_{\mathbb{R}_+}(\vartheta-1)_+^2.
  \end{equation}
  For the term
  \begin{equation*}
    \mathcal{J}_5:=
    \int_0^t\int_{\Omega'_2(s)}\left[\mu\psi_x^2c_v^{-1}\psi^2
     -P\psi_xc_v^{-1}\psi^2-2\mu\psi\psi_x\vartheta_x\right],
  \end{equation*}
  we apply Cauchy's inequality and \eqref{est4.2} to deduce
  \begin{equation}\label{J5}
  \begin{aligned}
    |\mathcal{J}_5|\lesssim~& \epsilon \int_0^t\int_{\Omega'_2(s)}\vartheta_x^2
   +C(\epsilon)\int_0^t\int_{\Omega'_2(s)}\psi^2\psi_x^2
   +\int_0^t\int_{\Omega'_2(s)}\psi^2\theta^2\\
    \lesssim~&\epsilon \int_0^t\int_{\Omega'_2(s)}\vartheta_x^2
   +C(\epsilon)\int_0^t\int_{\Omega'_2(s)}\psi^2\psi_x^2
   +\int_0^t\int_{\Omega'_2(s)}\psi^2(\vartheta-1)_+^2\\
   \lesssim~&\epsilon \int_0^t\int_{\Omega'_2(s)}\vartheta_x^2
   +C(\epsilon)\int_0^t\int_{\Omega'_2(s)}\psi^2\psi_x^2
   +\int_0^t\sup_{\mathbb{R}_+}(\vartheta-1)_+^2.
  \end{aligned}
  \end{equation}
  We finally consider
  \begin{equation*}
    \mathcal{J}_6:=
    \int_0^t\int_{\Omega'_2(s)}c_v^{-1}\kappa\psi^2\vartheta_{xx}.
  \end{equation*}
  In order to estimate $\mathcal{J}_6$,
  we apply Lebesgue's dominated convergence theorem to find
  \begin{equation}\label{J6}
    \begin{aligned}
    \mathcal{J}_6
    =~&\frac{\kappa}{c_v}\lim_{\nu\to 0^+}\int_0^t\int_{\mathbb{R}_+}
    \varphi_{\nu}(\vartheta)\psi^2\vartheta_{xx}\\
    =~&\frac{\kappa}{c_v}\lim_{\nu\to 0^+}\int_0^t\int_{\mathbb{R}_+}
    \left[-2\varphi_{\nu}(\vartheta)\psi\psi_x\vartheta_{x}
    -\varphi'_{\nu}(\vartheta)\psi^2\vartheta_{x}^2\right]\\
    \leq ~&-\frac{\kappa}{c_v}\lim_{\nu\to 0^+}\int_0^t\int_{\mathbb{R}_+}
    2\varphi_{\nu}(\vartheta)\psi\psi_x\vartheta_{x}\\
    \leq ~&\epsilon \int_0^t\int_{\mathbb{R}_+}\vartheta_x^2
    +C(\epsilon)\int_0^T\int_{\mathbb{R}_+}\psi^2\psi_x^2.
  \end{aligned}
  \end{equation}
  where the approximate scheme $\varphi_{\nu}(\vartheta)$ is defined by
  \begin{equation*}
  \varphi_{\nu}(\vartheta)=
  \begin{cases}
    1, \quad \quad &\vartheta-2>\nu,\\
    (\vartheta-2)/\nu, \quad  \quad& 0<\vartheta-2\leq\nu,\\
    0, \quad \quad&\vartheta-2\leq 0.\\
  \end{cases}
\end{equation*}
Plugging \eqref{J1}-\eqref{J2}, \eqref{J3b}-\eqref{J6} into \eqref{id4.3}, we get
from \eqref{bound_rho} that
\begin{equation}\label{id4.3a}
  \begin{aligned}
    &\int_{\mathbb{R}_+}(\vartheta-2)_+^2\mathrm{d}x
      +\int_0^t\int_{\Omega'_2(s)}\left[\vartheta_x^2
      +\psi_x^2(\vartheta-2)_+\right]\\
      &\quad\lesssim 1+\epsilon\int_0^t\int_{\mathbb{R}_+}\vartheta_x^2
      +C(\epsilon)\int_0^t\int_{\mathbb{R}_+}\psi^2\psi_x^2
      +C(\epsilon)\int_0^t\sup_{\mathbb{R}_+}(\vartheta-1)_+^2.
  \end{aligned}
\end{equation}

\noindent{\bf Step 3}.
We obtain from \eqref{est1} that
\begin{equation}\label{est4.3}
  \begin{aligned}
 \int_0^t\int_{\mathbb{R}_+}\left[\vartheta_x^2+\psi_x^2\theta\right]
 \lesssim~&\int_0^t\int_{\Omega'_3(s)}
 \left[\vartheta_x^2+\psi_x^2(\vartheta-2)_+\right]
 +\int_0^t\int_{\mathbb{R}_+\setminus\Omega'_3(s)}
 \left[\frac{\vartheta_x^2}{\theta^2}+\frac{\psi_x^2}{\theta}\right]\\
 \lesssim~&\int_0^t\int_{\Omega'_2(s)}
 \left[\vartheta_x^2+\psi_x^2(\vartheta-2)_+\right]
 +1.
\end{aligned}
\end{equation}
Combining \eqref{est4.3} and \eqref{id4.3a}, and choosing $\epsilon$
sufficiently small, we have
\begin{equation}\label{id4.3b}
      \int_{\mathbb{R}_+}(\vartheta-2)_+^2\mathrm{d}x
      +\int_0^t\int_{\mathbb{R}_+}\left[\vartheta_x^2
      +\psi_x^2\theta\right]
      \lesssim 1
      +\int_0^t\sup_{\mathbb{R}_+}(\vartheta-1)_+^2
      +\int_0^t\int_{\mathbb{R}_+}\psi^2\psi_x^2.
\end{equation}

\noindent{\bf Step 4}.
To estimate the last term of \eqref{id4.3b}, we
multiply $\eqref{per1}_2$ by $\psi^3$ and then integrate the resulting identity
over $(0,t)\times\mathbb{R}_+$ to have
\begin{equation}\label{id4.4a}
 \begin{aligned}
   &\frac{1}{4}\int_{\mathbb{R}_+}\rho\psi^4\mathrm{d}x
   +3\mu\int_0^t\int_{\mathbb{R}_+}\psi^2\psi_x^2
   -\frac{1}{4}\int_{\mathbb{R}_+}\rho_0\psi_0^4\mathrm{d}x\\
   =~&
   3R\int_0^t\int_{\mathbb{R}_+}\psi^2\psi_x\hat{\theta}\phi
   +3R\int_0^t\int_{\mathbb{R}_+}\psi^2\psi_x\rho\vartheta
   +\int_0^t\int_{\mathbb{R}_+}f_2\psi^3.
 \end{aligned}
\end{equation}
From \eqref{est4.1} and \eqref{est4.2}, we have
\begin{equation}\label{id4.4a1}
 \begin{aligned}
    &\int_0^t\int_{\mathbb{R}_+}\psi^2\psi_x\hat{\theta}\phi
+\int_0^t\int_{\mathbb{R}_+\setminus\Omega'_2(s)}\psi^2\psi_x\rho\vartheta\\
    &\quad\lesssim\int_0^t\|\psi\|_{L_x^\infty}^2\|\psi_x\|
     \left[\|\phi\|+\|\vartheta\|_{L^2(\mathbb{R}_+\setminus\Omega_2'(s))}\right]
    \lesssim\int_0^t\|\psi_x\|^2,
  \end{aligned}
\end{equation}
We then apply Cauchy's inequality to derive
\begin{equation}\label{id4.4a2}
  \begin{aligned}
  \int_0^t\int_{\Omega_2'(s)}\psi^2\psi_x\rho\vartheta
  \leq~&
  \epsilon\int_0^t\int_{\mathbb{R}_+}\psi^2\psi_x^2
  +C(\epsilon)\int_0^t\int_{\Omega_2'(s)}\psi^2\vartheta^2\\
  \leq~& \epsilon\int_0^t\int_{\mathbb{R}_+}\psi^2\psi_x^2
  +C(\epsilon)\int_0^t\sup_{\mathbb{R}_+}\left(\vartheta-1\right)_+^2.
\end{aligned}
\end{equation}
In view of \eqref{f_23}, we utilize
\eqref{G2}, \eqref{point3}, \eqref{R4b} and the Young's inequalities to have
\begin{equation}\label{id4.4a3}
  \begin{aligned}
  \int_0^t\int_{\mathbb{R}_+}f_2\psi^3
    \lesssim ~&N\int_0^t\int_{\mathbb{R}_+}
    \left[G
     +\big|(\Psi,\Psi_x)\big|\big|(\bar{U}_{x},\tilde{U}_{x},\tilde{U}_{xx})\big|\right]
     \psi^2\\
    \lesssim~& 1+
      \left[\bar{\delta}^{\frac{2}{9}} N^{\frac{34}{3}}
      +\tilde{\delta}N^3\right]
       \int_0^t \|\Psi_x \|^2
       +\tilde{\delta}m_1^{-1}
      N^5 \int_0^t
      \rho\Phi\left(\frac{\hat{\rho}}{\rho}\right)(s,0)
      \mathrm{d}s.
\end{aligned}
\end{equation}
Plugging \eqref{id4.4a1} -\eqref{id4.4a3} into \eqref{id4.4a}, and taking $\epsilon$
sufficiently small, we derive from \eqref{asmp1}-\eqref{est2} that
\begin{equation}\label{id4.4b}
  \begin{aligned}
   \int_{\mathbb{R}_+}\psi^4\mathrm{d}x+\int_0^t\int_{\mathbb{R}_+}\psi^2\psi_x^2
   \lesssim1+\int_0^t\int_{\mathbb{R}_+}\psi_x^2
   +\int_0^t\sup_{\mathbb{R}_+}\left(\vartheta-1\right)_+^2.
 \end{aligned}
\end{equation}
It follows from \eqref{est1} that
\begin{equation}\label{id4.4b1}
  \int_0^t\int_{\mathbb{R}_+}\psi_x^2
 \leq \epsilon \int_0^t\int_{\mathbb{R}_+}\theta\psi_x^2
 +C(\epsilon)\int_0^t\int_{\mathbb{R}_+}\frac{\psi_x^2}{\theta}
   \leq \epsilon \int_0^t\int_{\mathbb{R}_+}\theta\psi_x^2
   +C(\epsilon).
\end{equation}
Combination of  \eqref{id4.4b1} and \eqref{id4.4b} yields
\begin{equation}\label{id4.4c}
  \int_{\mathbb{R}_+}\psi^4\mathrm{d}x+\int_0^t\int_{\mathbb{R}_+}(1+\psi^2)\psi_x^2
  \lesssim C(\epsilon)+\epsilon \int_0^t\int_{\mathbb{R}_+}\theta\psi_x^2
  +\int_0^t\sup_{\mathbb{R}_+}\left(\vartheta-1\right)_+^2.
\end{equation}
We plug \eqref{id4.4c} into \eqref{id4.3b} and choose $\epsilon$
suitable small to find
\begin{equation}\label{est4.4}
    \int_{\mathbb{R}_+}
    \left[\left(\vartheta-2\right)_+^2+\psi^4\right]\mathrm{d}x
    +\int_0^t\int_{\mathbb{R}_+}\left[\vartheta_x^2+\psi_x^2\left(1+\theta+\psi^2\right)\right]
    \lesssim 1+\int_0^t\sup_{\mathbb{R}_+}\left(\vartheta-1\right)_+^2.
 \end{equation}

\noindent{\bf Step 5}.
It remains to estimate the last term of \eqref{est4.4}.
According to the fundamental theorem of calculus, we have from \eqref{est4.2} that
\begin{equation}\label{est4.5}
  \begin{aligned}
    \int_0^T\sup_{\mathbb{R}_+}\left(\vartheta-1\right)_+^2
    \leq ~&\int_0^T\left[\int_{\Omega_{1}'(s)}\left|\vartheta_x\right|\right]^2\\
    \leq ~&\int_0^T\left[\int_{\Omega_{1}'(s)}\frac{\vartheta_x^2}{\theta}
    \int_{\Omega_{1}'(s)}\theta\right]\\
    \leq ~&\epsilon\int_0^T\int_{\mathbb{R}_+}\vartheta_x^2
    +C(\epsilon)\int_0^T\int_{\mathbb{R}_+}\frac{\vartheta_x^2}{\theta^2}\\
    \leq ~&\epsilon\int_0^T\int_{\mathbb{R}_+}\vartheta_x^2+C(\epsilon).
  \end{aligned}
\end{equation}
Plug \eqref{est4.5} into \eqref{est4.4} and choose $\epsilon>0$
suitable small to obtain \eqref{est4}.
This completes the proof of the lemma.
\end{proof}
We obtain the uniform bound of the $H^1_x$-norm of
$(\phi,\psi,\vartheta)(t,x)$ uniformly in  time $t$ in the next lemma.
\begin{lemma}\label{lem_the2}
Suppose that the conditions listed in Lemma \ref{lem_bas} hold.  Then
  \begin{equation}
   \label{est5}
    \sup_{0\leq t\leq T}\|(\phi,\psi,\vartheta)(t)\|_1^2
    +\int_0^T
    \left[\|\sqrt\theta\phi_x(t)\|^2+\|(\psi_{x},\vartheta_{x})(t)\|_1^2\right]
    \mathrm{d}t
    \leq C_2^2,
  \end{equation}
    where  the positive constant $C_2$ depends only
on $\inf_{x\in\mathbb{R}_+}\{\rho_0(x),\theta_0(x)\}$
  and $\|(\phi_0,\psi_0,\vartheta_0)\|_1$.
\end{lemma}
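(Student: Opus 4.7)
The plan is to upgrade the $L^2$-estimate of Lemma \ref{lem_the1} to an $H^1$-estimate whose constant $C_2$ depends only on the initial data, crucially not on the a priori bounds $(m_1,m_2,N)$. I would proceed in three successive steps, corresponding to the three first-order derivatives $\phi_x$, $\psi_x$, $\vartheta_x$, extracting the second-order quantities $\psi_{xx}$, $\vartheta_{xx}$ from the equations at the end.

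For the $\phi_x$-estimate, I would revisit the derivation of \eqref{est2} in Step 2 of the proof of Lemma \ref{lem_bas}. Combined with the uniform density bound $\rho\sim 1$ from \eqref{bound_rho}, that estimate already yields $\sup_t\|\phi_x\|^2+\int_0^T\|\sqrt\theta\phi_x\|^2\,\mathrm{d}t\lesssim N$. The $N$-factor arises precisely where $\|\psi\|^2$ was absorbed through the a priori assumption \eqref{apriori}; since Lemma \ref{lem_the1} now supplies the $N$-independent bound $\|\psi\|^2\lesssim 1$, the same computation carried out with this improved replacement produces the desired bound $\sup_t\|\phi_x\|^2+\int_0^T\|\sqrt\theta\phi_x\|^2\,\mathrm{d}t\lesssim 1$.

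For the first-order derivatives of $\psi$ and $\vartheta$, I would multiply $\eqref{per1}_2$ by $\psi_t$ and $\eqref{per1}_3$ by $\vartheta_t$ and integrate over $\mathbb{R}_+$. Since $\psi(t,0)=\vartheta(t,0)=0$ forces $\psi_t(t,0)=\vartheta_t(t,0)=0$, the boundary terms $[\mu\psi_x\psi_t]_0^\infty$ and $[\kappa\vartheta_x\vartheta_t]_0^\infty$ coming from integration by parts vanish identically, leaving identities of the form
\begin{equation*}
  \frac{\mu}{2}\frac{\mathrm{d}}{\mathrm{d}t}\|\psi_x\|^2+\|\sqrt{\rho}\psi_t\|^2 = -\int_{\mathbb{R}_+}\rho u\psi_x\psi_t - \int_{\mathbb{R}_+}(P-\hat P)_x\psi_t + \int_{\mathbb{R}_+} f_2\psi_t
\end{equation*}
and its $\vartheta$-analogue. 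After Cauchy's inequality absorbs the $\|\sqrt\rho\psi_t\|^2$ and $\|\sqrt\rho\vartheta_t\|^2$ pieces, the right-hand sides are controlled through $(P-\hat P)_x = R\theta\phi_x+R\rho\vartheta_x+\cdots$ together with bounds on $\|f_2\|,\|f_3\|$ deduced from \eqref{f1_hat}–\eqref{f3_hat}, Lemma \ref{lem_SRW} and the decay \eqref{decay1}. The only delicate contribution is the nonlinear source $\mu\psi_x^2\vartheta_t$ from the energy equation; I would dominate it via $\|\psi_x^2\|^2\lesssim\|\psi_x\|_{L^\infty}^2\|\psi_x\|^2\lesssim\|\psi_x\|^3\|\psi_{xx}\|$ and Young's inequality, deferring the absorption of $\|\psi_{xx}\|^2$ to the following step. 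Gronwall in $t$ then gives $\sup_t\|(\psi_x,\vartheta_x)\|^2+\int_0^T\|(\psi_t,\vartheta_t)\|^2\,\mathrm{d}t\lesssim 1$.

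For the second-order derivatives, I would simply rewrite the equations as $\mu\psi_{xx}=\rho(\psi_t+u\psi_x)+(P-\hat P)_x-f_2$ and $\kappa\vartheta_{xx}=c_v\rho(\vartheta_t+u\vartheta_x)+P\psi_x-\mu\psi_x^2-f_3$, square them and integrate in $(t,x)$. Each summand on the right is already controlled, and the residual $\|\psi_{xx}\|^2$ produced by the Sobolev-interpolation estimate of $\mu\psi_x^2$ is absorbed into the left-hand side using the smallness assumption \eqref{asmp1}. The main obstacle throughout is the careful bookkeeping needed to ensure that no intermediate constant depends on $m_1^{-1},m_2^{-1}$ or $N$: this forces one to systematically replace each pointwise or $L^2$ bound on $(\phi,\psi,\vartheta,\rho,\theta)$ by its $N$-independent counterpart from Lemmas \ref{lem_boun} and \ref{lem_the1}, and to rely on $\bar\delta+\tilde\delta\leq\epsilon_0$ in \eqref{asmp1}, rather than any a priori assumption, to close the remaining bad terms.
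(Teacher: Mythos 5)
The first step of your argument (upgrading the $\phi_x$-estimate using Lemma \ref{lem_the1} and the uniform density bound \eqref{bound_rho}) matches what the paper does at \eqref{est5.1}. Your choice of multipliers for the parabolic part is genuinely different: you test $\eqref{per1}_2$ and $\eqref{per1}_3$ against $\psi_t$ and $\vartheta_t$, producing $\int\rho\psi_t^2$ and $\int\rho\vartheta_t^2$ as dissipation and later recovering $\psi_{xx},\vartheta_{xx}$ from the equations, whereas the paper tests against $\psi_{xx}/\rho$ and $\vartheta_{xx}/\rho$, obtaining $\int\psi_{xx}^2/\rho$ and $\int\vartheta_{xx}^2/\rho$ directly. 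Both are standard routes and in principle either can close.

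However, there is a genuine gap in your Step 2. You claim that Gronwall gives $\sup_t\|(\psi_x,\vartheta_x)\|^2+\int_0^T\|(\psi_t,\vartheta_t)\|^2\,\mathrm{d}t\lesssim 1$, but this cannot be obtained directly. After Cauchy the term $\int(P-\hat P)_x\psi_t$ leaves $\int\theta^2\phi_x^2$, and the term $\int P\psi_x\vartheta_t$ leaves $\int\rho\theta^2\psi_x^2$; at this stage you only control $\int\int\theta\phi_x^2$ and $\int\int\theta\psi_x^2$ (from the improved Step 1 and Lemma \ref{lem_the1}), so each of these costs a factor $\|\theta\|_{L^\infty([0,T]\times\mathbb{R}_+)}$, which is \emph{not} yet known to be $O(1)$ — only the much weaker a priori bound $\theta\lesssim N$ is available from \eqref{apriori}, and the whole point of Lemma \ref{lem_the2} is to produce a constant independent of $N$. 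The paper faces the same difficulty: it deliberately carries $\|\theta\|_{L^\infty}$ through the estimates, obtaining $\sup\|\psi_x\|^2+\int\|\psi_{xx}\|^2\lesssim 1+\|\theta\|_{L^\infty}$ in \eqref{est5.2b} and then $\sup\|\vartheta_x\|^2+\int\|\vartheta_{xx}\|^2\lesssim 1+\|\theta\|_{L^\infty}^2$ in \eqref{est5.3b}, and only at the very end closes the loop by the interpolation inequality
\begin{equation*}
  \|\theta-\hat\theta\|_{L^\infty([0,T]\times\mathbb{R}_+)}^2
  \lesssim \sup_{0\leq t\leq T}\|\vartheta(t)\|\,\|\vartheta_x(t)\|
  \lesssim 1+\|\theta\|_{L^\infty([0,T]\times\mathbb{R}_+)},
\end{equation*}
which, because the right-hand side is of strictly lower order, yields the pointwise bound $\theta\lesssim 1$ in \eqref{upper2} and then the desired $H^1$-estimate. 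Your proposal makes no mention of tracking the $\|\theta\|_{L^\infty}$ dependence and omits this final bootstrap altogether, so as written the Gronwall step does not close. If you keep the $\|\theta\|_{L^\infty}$-powers explicit, verify that they appear with exponent strictly less than two on the right of the $\vartheta_x$-estimate, and then add the interpolation closure above, your $\psi_t$/$\vartheta_t$ route should go through as an alternative to the paper's $\psi_{xx}$/$\vartheta_{xx}$ multipliers.
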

\begin{proof}
  First, plugging \eqref{bound_rho} into \eqref{id4e},
  we deduce
  \begin{equation}\label{est5.1}
    \int_{\mathbb{R}_+}\phi_x^2+\int_0^t\int_{\mathbb{R}_+}\theta\phi_x^2
    \lesssim 1+\int_0^t\int_{\mathbb{R}_+}\left[\psi_x^2+\vartheta_x^2
    +\frac{\vartheta_x^2}{\theta^2}\right]\lesssim 1,
  \end{equation}
where we employed \eqref{est1} and \eqref{est4} in the last inequality.

  Next, multiply $\eqref{per1}_2$ by ${\psi_{xx}}/{\rho}$ to derive
  \begin{equation*}
    \left(\frac{\psi_x^2}{2}\right)_t-\left[\psi_t\psi_x+\frac12u\psi_x^2\right]_x
    +\frac12u_x\psi_x^2+\mu\frac{\psi_{xx}^2}{\rho}
    =\frac{(P-\hat{P})_x}{\rho}\psi_{xx}-\frac{f_2}{\rho}\psi_{xx}.
  \end{equation*}
  Integrating this last identity over $(0,t)\times\mathbb{R}_+$,
   we obtain from \eqref{bound_rho}
  and Cauchy's inequality that
  \begin{equation}\label{est5.2a}
  \begin{aligned}
    &\int_{\mathbb{R}_+}\psi_x^2\mathrm{d}x
    -\int_0^t\psi_x^2(s,0)\mathrm{d}s +\int_0^t\int_{\mathbb{R}_+}\psi_{xx}^2
    \\&\quad
    \lesssim 1
    +\int_0^t\int_{\mathbb{R}_+}\left[(P-\hat{P})_x^2
    +f_2^2+|\hat{u}_x|\psi_x^2+|\psi_x|^3\right].
  \end{aligned}
  \end{equation}
  Apply Sobolev's inequality and \eqref{est4} to obtain
  \begin{equation}\label{est5.2a1}
   \begin{aligned}
     \int_0^t\psi_x^2(s,0)\mathrm{d}s+\int_0^t\int_{\mathbb{R}_+}|\psi_x|^3
     \lesssim~&\int_0^t\|\psi_x\|\|\psi_{xx}\|
     +\int_0^t\|\psi_x\|^{\frac52}\|\psi_{xx}\|^{\frac12}\\
     \lesssim~&\epsilon\int_0^t\int_{\mathbb{R}_+}\psi_{xx}^2
     +C(\epsilon)\int_0^t\left[\|\psi_x\|^2
     +\|\psi_x\|^{\frac{10}{3}}\right]\\
     \lesssim ~&C(\epsilon)\left[1+
     \sup_{0\leq s\leq t}\|\psi_x(s)\|^{\frac{4}{3}}\right]
     +\epsilon\int_0^t\int_{\mathbb{R}_+}\psi_{xx}^2.
   \end{aligned}
  \end{equation}
  Using
  $
    (P-\hat{P})_x=R(\theta\phi_x+\rho\vartheta_x
    +\phi\hat{\theta}_x+\vartheta\hat{\rho}_x),
  $
  we derive from
  \eqref{f_23}, \eqref{est5.1}, \eqref{est4}
  that
  \begin{equation}\label{est5.2a2}
    \begin{aligned}
     \int_0^T\int_{\mathbb{R}_+}\left[(P-\hat{P})_x^2
    +|\hat{u}_x|\psi_x^2\right]
    \lesssim ~&\int_0^T\int_{\mathbb{R}_+}\left[|(\theta\phi_x,\psi_x,\vartheta_x)|^2
    +|(\hat{\rho}_x,\hat{u}_x,\hat{\theta}_x)|^2|\Psi|^2
    \right]\\
     \lesssim~& 1+\|\theta\|_{L^\infty([0,T]\times\mathbb{R}_+)}.
   \end{aligned}
  \end{equation}
  According to \eqref{f_23} and \eqref{G1}, we have from
  Lemma \ref{lem_SRW}, \eqref{R4b}, \eqref{asmp1}, \eqref{est1} and
  \eqref{est5.1} that
  \begin{equation}\label{f_23a}
    \int_0^T\int_{\mathbb{R}_+}|(f_2,f_3)|^2\lesssim 1.
  \end{equation}
  We plug \eqref{est5.2a1}-\eqref{f_23a} into \eqref{est5.2a} to get
  \begin{equation*}
    \sup_{0\leq t\leq T}\|\psi_x(t)\|^2+\int_0^T\int_{\mathbb{R}_+}\psi_{xx}^2
    \lesssim 1+\|\theta\|_{L^\infty([0,T]\times\mathbb{R}_+)}
    +\sup_{0\leq t\leq T}\|\psi_x(t)\|^{\frac43}.
  \end{equation*}
  Then Young's inequality yields the estimate
  \begin{equation}\label{est5.2b}
    \sup_{0\leq t\leq T}\|\psi_x(t)\|^2+\int_0^T\int_{\mathbb{R}_+}\psi_{xx}^2
    \lesssim 1+\|\theta\|_{L^\infty([0,T]\times\mathbb{R}_+)}.
  \end{equation}

  Next, multiply $\eqref{per1}_3$ by $ {\vartheta_{xx}}/{\rho}$ and integrate
  the resulting identity over $(0,T)\times\mathbb{R}_+$ to have
  \begin{equation*}
    \begin{aligned}
      \frac{c_v}{2}\int_{\mathbb{R}_+}\vartheta_x^2\mathrm{d}x
      +\kappa\int_0^T\int_{\mathbb{R}_+}\frac{\vartheta_{xx}^2}{\rho}
      =\frac{c_v}{2}\int_{\mathbb{R}_+}\vartheta_{0x}^2\mathrm{d}x
      +\int_0^T\int_{\mathbb{R}_+}\left[c_v u\vartheta_x
      -\mu\frac{\psi_x^2}{\rho} + R\theta\psi_x
      -\frac{f_3}{\rho}\right]\vartheta_{xx},
    \end{aligned}
  \end{equation*}
  which combined with \eqref{bound_rho} implies
  \begin{equation}\label{est5.3a}
    \begin{aligned}
      \int_{\mathbb{R}_+}\vartheta_x^2\mathrm{d}x
      +\int_0^T\int_{\mathbb{R}_+}\vartheta_{xx}^2
      \lesssim~&1+\int_0^T\int_{\mathbb{R}_+}
      \left[u^2\vartheta_x^2+\|\psi_x\|_{L^\infty}^2\psi_x^2+\theta^2\psi_x^2+h^2\right]\\
      \lesssim~&1+\int_0^T(1+\|\psi\|\|\psi_x\|)\|\vartheta_x\|^2
      +\int_0^T\|\psi_x\|^3\|\psi_{xx}\|\\
      &+\|\theta\|_{L^\infty([0,T]\times\mathbb{R}_+)}\int_0^T
      \int_{\mathbb{R}_+}\theta\psi_x^2
      +\int_0^T\int_{\mathbb{R}_+}f_3^2.
    \end{aligned}
  \end{equation}
  From \eqref{est5.2b}, we have
  \begin{align*}
    \int_0^T\|\psi_x\|^3\|\psi_{xx}\|
   \lesssim\sup_{0\leq t\leq T}\|\psi_x\|^2\int_0^T\left(\|\psi_x\|^2+\|\psi_{xx}\|^2\right)
   \lesssim 1+\|\theta\|_{L^\infty([0,T]\times\mathbb{R}_+)}^2.
  \end{align*}
 In light of \eqref{est5.1}, \eqref{est5.2b} and \eqref{est4},
we then obtain
 \begin{equation}\label{est5.3b}
   \sup_{0\leq t\leq T}\int_{\mathbb{R}_+}\vartheta_x^2\mathrm{d}x
      +\int_0^T\int_{\mathbb{R}_+}\vartheta_{xx}^2
   \lesssim 1+\|\theta\|_{L^\infty([0,T]\times\mathbb{R}_+)}^2.
 \end{equation}

 Finally, it follows from \eqref{est4} and \eqref{est5.3b} that
  \begin{equation*}
  \begin{aligned}
   \|\theta-\hat{\theta}\|_{L^\infty([0,T]\times\mathbb{R}_+)}^2
    \lesssim \sup_{0\leq t\leq T}\|\vartheta(t)\|\|\vartheta_x(t)\|
    \lesssim1+\|\theta\|_{L^\infty([0,T]\times\mathbb{R}_+)}.
  \end{aligned}
  \end{equation*}
  from which we have
  \begin{equation}\label{upper2}
   \theta(t,x)\lesssim 1 \quad {\rm for\ all}\ (t,x)\in[0,T]\times\mathbb{R}_+.
  \end{equation}
  Combine
  \eqref{est5.2a}, \eqref{est5.2b} and \eqref{est5.3b} to give
  \begin{equation*}
        \sup_{0\leq t\leq T}\int_{\mathbb{R}_+}[\phi_x^2+\psi_x^2+\vartheta_x^2]\mathrm{d}x
    +\int_0^T\int_{\mathbb{R}_+}[\theta\phi_x^2+\psi_{xx}^2+\vartheta_{xx}^2]
    \lesssim 1,
  \end{equation*}
  which together with \eqref{est4} yields \eqref{est5}.
This completes the proof of the lemma.
\end{proof}

\subsection{Local lower bound of temperature}\label{sec_tem}
In this subsection, we employ the maximum principle
to get the lower bound for the temperature, which does depend on the time $t$.
\begin{lemma}
  \label{lem_lower2}
Suppose that the conditions listed in Lemma \ref{lem_bas} hold. Then
  \begin{equation} \label{lower2}
    \inf_{\mathbb{R}_+}\theta(t,\cdot)
    \geq\frac{\inf_{\mathbb{R}_+}\theta(s,\cdot)}{C_3\inf_{\mathbb{R}_+}
    \theta(s,\cdot)(t-s)+1}
    \quad
    {\rm for}\ 0\leq s\leq t\leq T,
  \end{equation}
  where  the positive constant $C_3$ depends solely
on $\inf_{x\in\mathbb{R}_+}\{\rho_0(x),\theta_0(x)\}$
  and $\|(\phi_0,\psi_0,\vartheta_0)\|_1$.
\end{lemma}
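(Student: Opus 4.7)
The plan is to apply the parabolic maximum principle to the reciprocal temperature $W := 1/\theta$. First I would derive the PDE for $W$ by combining $W_t = -\theta_t/\theta^2$, $W_x = -\theta_x/\theta^2$, and $\theta_{xx} = -\theta^2 W_{xx} + 2\theta^3 W_x^2$ with the temperature equation
\begin{equation*}
   c_v\rho(\theta_t + u\theta_x) + R\rho\theta u_x = \kappa\theta_{xx} + \mu u_x^2,
\end{equation*}
which yields after a direct substitution
\begin{equation*}
   c_v\rho W_t + c_v\rho u W_x - \kappa W_{xx} + 2\kappa\theta W_x^2 + \frac{\mu u_x^2}{\theta^2} = \frac{R\rho u_x}{\theta}.
\end{equation*}

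Next, I would apply Young's inequality to the right-hand side,
\begin{equation*}
   \frac{R\rho u_x}{\theta} \leq \frac{\mu u_x^2}{2\theta^2} + \frac{R^2\rho^2}{2\mu},
\end{equation*}
and drop the manifestly nonnegative terms $2\kappa\theta W_x^2$ and $\mu u_x^2/(2\theta^2)$ on the left to obtain the differential inequality
\begin{equation*}
   W_t + u W_x - \frac{\kappa}{c_v\rho} W_{xx} \leq \frac{R^2\rho}{2\mu c_v} \leq C_3,
\end{equation*}
where the final bound uses the uniform upper bound $\rho \leq C_1$ from \eqref{bound_rho}; this defines the constant $C_3$ depending only on the data as claimed.

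I would then compare $W$ with the affine barrier $M(t) := W_{\max}(s) + C_3(t-s)$, where $W_{\max}(s) := \sup_{\mathbb{R}_+} W(s,\cdot) = 1/\inf_{\mathbb{R}_+}\theta(s,\cdot)$. The function $w := W - M$ satisfies the differential inequality $w_t + u w_x - (\kappa/(c_v\rho)) w_{xx} \leq 0$, and by construction $w(s,x) \leq 0$ on $\mathbb{R}_+$. At the spatial boundary, the Dirichlet data $\theta(t,0)=\theta_-$ gives $W(t,0) = 1/\theta_-$; because $\theta(s,0)=\theta_-$ forces $\inf_{\mathbb{R}_+}\theta(s,\cdot)\leq\theta_-$ by continuity, we have $1/\theta_- \leq W_{\max}(s)$, hence $w(t,0) \leq -C_3(t-s) \leq 0$. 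Since Lemma \ref{lem_the2} ensures that $W$ is bounded on $[0,T]\times\mathbb{R}_+$ and the coefficients of the linear operator are bounded (with $\rho$ bounded above and below), the parabolic maximum principle on the half-strip $[s,t]\times\mathbb{R}_+$ applies and yields $w\leq 0$ throughout.

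This gives $\sup_{\mathbb{R}_+} W(t,\cdot) \leq W_{\max}(s) + C_3(t-s)$, which after inverting becomes precisely \eqref{lower2}. The main technical obstacle is the rigorous application of the maximum principle on the unbounded half-line together with the boundary control; this is handled by the uniform $L^\infty$ bound on $W$ inherited from the earlier estimates plus the observation that the boundary value $1/\theta_-$ is automatically dominated by the initial maximum at time $s$.
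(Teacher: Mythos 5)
Your proof is correct, and it reaches the same conclusion by a related but technically different route. The paper works with $\theta$ directly: it completes the square in the viscous source, $u_x^2-\tfrac{P}{\mu}u_x\geq-\tfrac{P^2}{4\mu^2}$, to obtain the Riccati-type inequality $\theta_t+u\theta_x-\tfrac{\kappa}{c_v\rho}\theta_{xx}+C_3\theta^2\geq0$, then compares $\theta$ with the explicit ODE solution $\underline\theta(t)=\tfrac{a}{C_3a(t-s)+1}$ of $\underline\theta'=-C_3\underline\theta^2$, using the factorization $\theta^2-\underline\theta^2=(\theta+\underline\theta)(\theta-\underline\theta)$ so that $\Theta=\theta-\underline\theta$ satisfies a linear parabolic inequality with nonnegative zeroth-order coefficient, to which the weak maximum principle applies. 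Your substitution $W=1/\theta$ transports that Riccati structure to an affine one: your barrier $M(t)=W_{\max}(s)+C_3(t-s)$ is exactly the reciprocal of $\underline\theta(t)$ (up to the choice of $C_3$), and Young's inequality on the $W$-equation plays the role of completing the square in the $\theta$-equation. The gain with your route is that the barrier needn't be guessed as a Riccati solution and the comparison operator has no zeroth-order term; the cost is deriving the PDE for $W$ (which you did correctly, including $\theta_{xx}=-\theta^2W_{xx}+2\theta^3W_x^2$). Both versions need exactly the same inputs: $\rho\leq C_1$ from \eqref{bound_rho} for the constant $C_3$, the boundary sign control at $x=0$ via $\inf_{\mathbb{R}_+}\theta(s,\cdot)\leq\theta(s,0)=\theta_-$, and boundedness of the comparison function on the half-strip to justify the maximum principle on an unbounded domain. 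One small correction: the boundedness of $W$ you invoke comes from the \emph{a priori} lower bound $\theta\geq m_2$ built into $X(0,T;m_1,m_2,N)$ (cf.\ \eqref{apriori}), not from Lemma \ref{lem_the2}, which controls $\|\vartheta\|_1$ and hence gives an \emph{upper} bound on $\theta$ but no lower bound.
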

\begin{proof}
  It follows from $\eqref{NS_E}_3$ that $\theta$ satisfies
  \begin{equation*}
    \theta_t+u\theta_x-\frac{\kappa}{c_v\rho}\theta_{xx}
    =\frac{\mu}{c_v\rho}\left[u_x^2-\frac{P}{\mu}u_x\right]
    \geq-\frac{P^2}{4\mu c_v\rho}=-\frac{R^2\rho}{4\mu c_v}\theta^2.
  \end{equation*}
  Hence we deduce from \eqref{bound_rho} that
  \begin{equation*}
    \theta_t+u\theta_x-\frac{\kappa}{c_v\rho}\theta_{xx}
    +C_3\theta^2\geq 0.
  \end{equation*}
  Let $\Theta:=\theta-\underline{\theta}$ with
  $\underline{\theta}:=\frac{\inf_{\mathbb{R}_+}\theta(s,\cdot)}
  {C_3\inf_{\mathbb{R}_+}\theta(s,\cdot)(t-s)+1}$. We observe
  \begin{equation*}
    \Theta|_{x=0,\infty}\geq 0,\quad
    \Theta|_{t=s}\geq 0,
  \end{equation*}
  and
  \begin{equation*}
    \begin{aligned}
      \Theta_t+u\Theta_x-\frac{\kappa}{c_v\rho}\Theta_{xx}+
      C_2(\theta+\underline{\theta})\Theta
      =\theta_t+u\theta_x
      -\frac{\kappa}{c_v\rho}\theta_{xx}+C_2\theta^2
       \geq 0.
    \end{aligned}
  \end{equation*}
  Applying the weak maximum principle
  (see \cite[Section 7.1]{E10MR2597943}),
we have that $\Theta(t,x)\geq 0$
  for $0\leq s\leq t\leq T$ and $x\in\mathbb{R}_+$.
This completes the proof of the lemma.
\end{proof}
\subsection{Proof of Theorem \ref{thm2}}\label{sec_proof}
This subsection is devoted to proving the stability of
the superposition of a rarefaction wave and a non-degenerate stationary
solution, i.e. Theorem \ref{thm2}.
To this end,
we first give the local existence of solutions
to the problem \eqref{per1}-\eqref{per2} in the following proposition.
It can be proved by the standard iteration method (see \cite{HMS04MR2040072}
for example)
and hence we omit the proof  for brevity.
\begin{proposition}[Local existence] \label{Pro_loc}
   Suppose that the conditions in Theorem \ref{thm2} hold.
   Let $M$, $\lambda_1$
   and $\lambda_2$ be some  positive constants such that
   $\|(\phi_0,\psi_0,\vartheta_0)\|_1\leq M$,
   $\phi_0(x)+\hat{\rho}(x)\geq \lambda_1$ and
    $\vartheta_0(x)+\hat{\theta}(x)\geq \lambda_2$
    for all $x\in\mathbb{R}_+$.
Then  there exists a positive constant
$T_0=T_0(\lambda_{1},\lambda_{2},M)$,
depending only on $\lambda_{1}$, $\lambda_{2}$ and $M$, such that
the problem \eqref{per1}-\eqref{per2}
admits a unique solution $(\phi,\psi,\vartheta)\in
X\left(0,T_0;\frac{1}{2}\lambda_1,
\frac{1}{2}\lambda_2,2M\right)$.
\end{proposition}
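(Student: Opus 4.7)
The plan is to construct the solution by a Picard iteration in the Eulerian coordinate. Starting from $(\phi^0,\psi^0,\vartheta^0)\equiv(\phi_0,\psi_0,\vartheta_0)$, at step $n$ I freeze the coefficients of \eqref{per1} using the previous iterate to obtain a decoupled linear problem: the continuity equation becomes a first-order linear transport equation for $\phi^n$ with coefficient $u^{n-1}=\hat{u}+\psi^{n-1}$, while the momentum and energy equations become linear parabolic equations for $\psi^n$ and $\vartheta^n$ with inhomogeneous source terms built from $(\phi^{n-1},\psi^{n-1},\vartheta^{n-1})$ and the background $(\hat{\rho},\hat{u},\hat{\theta})$, retaining the Dirichlet boundary conditions $(\psi^n,\vartheta^n)|_{x=0}=(0,0)$. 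The transport equation for $\phi^n$ is solved by characteristics; since the boundary characteristic speed equals $u(t,0)=u_-\leq 0$, characteristics leave the domain at $x=0$ and no boundary condition on $\phi^n$ is imposed there, consistent with the outflow nature of the problem. The linear parabolic problems for $\psi^n$ and $\vartheta^n$ on $\mathbb{R}_+$ with Dirichlet data are solved by classical half-space theory.

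Next I would show that for $T_0$ sufficiently small, depending only on $\lambda_1$, $\lambda_2$ and $M$ (together with the fixed background quantities), the iterates remain in the closed set $X(0,T_0;\tfrac12\lambda_1,\tfrac12\lambda_2,2M)$. The uniform $H^1$ bound on $(\phi^n,\psi^n,\vartheta^n)$ follows from standard energy estimates for the linear parabolic equations combined with differentiation of the continuity equation and multiplication by $\phi^n_x$, in the spirit of the computations leading to \eqref{est2}. Positivity of $\phi^n+\hat\rho$ on $[0,T_0]\times\mathbb{R}_+$ follows from the representation of $\phi^n$ along characteristics, together with the initial lower bound $\lambda_1$ and a Gr\"onwall pointwise estimate that costs only a factor arbitrarily close to $1$ on a short time interval. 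The lower bound $\vartheta^n+\hat\theta\geq \tfrac12\lambda_2$ follows from the weak maximum principle applied to the linear parabolic equation for $\vartheta^n$, exactly as in the derivation of \eqref{lower2}. All nonlinear corrections are absorbed by shrinking $T_0$ and using the one-dimensional Sobolev embedding $H^1\hookrightarrow L^\infty$.

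Finally, contraction is established in the weaker norm $C([0,T_0];L^2)\cap L^2(0,T_0;H^1)$ via standard energy estimates applied to the difference $(\phi^n-\phi^{n-1},\psi^n-\psi^{n-1},\vartheta^n-\vartheta^{n-1})$, which yields a Cauchy sequence whose limit, by interpolation and weak-$*$ lower semicontinuity of the $H^1$ norm, lies in $X(0,T_0;\tfrac12\lambda_1,\tfrac12\lambda_2,2M)$ and solves \eqref{per1}-\eqref{per2}. Uniqueness in this class follows from the same contraction argument applied to two arbitrary solutions. The main obstacle is quantitative bookkeeping: one must ensure that the thresholds $(\tfrac12\lambda_1,\tfrac12\lambda_2,2M)$ are actually preserved by the iteration on a time interval depending only on $(\lambda_1,\lambda_2,M)$; the most delicate point is the boundary analysis at $x=0$, where integration by parts in the energy estimates produces boundary contributions whose sign is controlled by $u_-\leq 0$, so that they either vanish (for $\psi^n$ and $\vartheta^n$) or enter with a favourable sign (for $\phi^n$, via the outflow dissipation terms of the type $-u_-\rho\,\Phi(\hat\rho/\rho)(t,0)$ appearing in Lemma~\ref{lem_bas}).
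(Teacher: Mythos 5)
The paper does not actually write out a proof of this proposition; it simply cites the ``standard iteration method'' (with a pointer to \cite{HMS04MR2040072}) and omits the details. Your Picard-iteration sketch — characteristics for the frozen transport equation for $\phi^n$ with outflow boundary behavior since $u^{n-1}(t,0)=u_-\le 0$, classical half-space linear parabolic theory with homogeneous Dirichlet data for $(\psi^n,\vartheta^n)$, short-time invariance of $X(0,T_0;\tfrac12\lambda_1,\tfrac12\lambda_2,2M)$ via energy/characteristic/maximum-principle estimates, and contraction in $C([0,T_0];L^2)\cap L^2(0,T_0;H^1)$ — is precisely that standard approach, correctly adapted to the half-space boundary conditions of \eqref{per1}--\eqref{per2}.
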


Next we will give the proof of Theorem \ref{thm2} in six steps
by employing the continuation argument.

\vspace*{2mm}

\noindent{\bf Step 1}.
Let $\Pi$ and $\lambda_i $ $(i=1,2,3)$
be some positive constants
such that $\|(\phi_0,\psi_0,\vartheta_0)\|_1\leq \Pi$ and
\begin{equation*}\label{3.1}
   \rho_0(x)\geq \lambda_1,\quad
   \theta_0(x)\geq \lambda_2,\quad
   \hat{\theta}(t,x)\geq \lambda_3\quad
  {\rm for\ all}\ t,x\geq 0.
\end{equation*}
Set
$T_1=128\lambda_3^{-4}C_2^4$,
where $C_2$ is exactly the same constant as in  \eqref{est5}.
Applying Proposition \ref{Pro_loc}, we infer
that the problem \eqref{per1}-\eqref{per2} has
a unique solution $(\phi,\psi,\vartheta)\in X(0,t_1;\frac{1}{2}\lambda_1,
 \frac{1}{2}\lambda_2,2 \Pi)$
for some positive constant
$$ t_1=\min\{T_1,T_0(\lambda_1,\lambda_2,\Pi)\}.$$

Let $0<
\delta\leq \delta_1$ with
 $$\Xi\left(\tfrac{1}{2}{\lambda_1},
\tfrac{1}{2}{\lambda_2},2\Pi\right)\delta_1=\epsilon_0.$$
Then we can apply Lemmas
\ref{lem_boun},  \ref{lem_the2} and \ref{lem_lower2}
with $T=t_1$ to obtain that for each $t\in[0,t_1]$,
the local solution $(\phi,\psi,\vartheta)$ constructed above satisfies that
\begin{equation}\label{3.2}
      \theta(t,x)\geq \frac{\lambda_2}
      {C_3 \lambda_2T_1+1}=:C_4 \quad {\rm for\ all}\ x\in \mathbb{R}_+,
\end{equation}
and
\begin{equation}\label{3.3}
  \begin{gathered}
    C_1^{-1}\leq \rho(t,x)\leq C_1
    \quad {\rm for\ all}\ x\in \mathbb{R}_+,\\
    \|(\phi,\psi,\vartheta)(t)\|_1^2
    +\int_0^{t}
    \left[\|\sqrt\theta\phi_x(s)\|^2+\|(\psi_{x},\vartheta_{x})(s)\|_1^2\right]
    \mathrm{d}s
    \leq C_2^2.
  \end{gathered}
\end{equation}

\noindent{\bf Step 2}.
If we take $(\phi,\psi,\vartheta)(t_1,\cdot)$ as the initial data,
we can apply Proposition \ref{Pro_loc}
and extend the local solution  $(\phi,\psi,\vartheta)$ to the time
interval $[0,t_1+t_2]$ with
$$t_2=\min\{T_1-t_1, T_0(C_1^{-1},
C_4,C_2)\}.$$
Moreover,   for all $(t,x)\in[t_1,t_1+t_2]\times\mathbb{R}_+$,
\begin{equation*}\label{3.4}
   \rho(t,x)\geq \tfrac{1}{2}C_1^{-1},\quad \theta(t,x)\geq \tfrac{1}{2}C_4,
   \quad \|(\phi,\psi,\vartheta)(t)\|_1\leq 2C_2.
\end{equation*}
Take $0<\delta\leq \min\{\delta_1,\delta_2\}$ with
$$\Xi\left(\tfrac{1}{2}C_1^{-1},
\tfrac{1}{2}C_4,2C_2\right)\delta_2=\epsilon_0.$$
Then we can employ
Lemmas
\ref{lem_boun},  \ref{lem_the2} and \ref{lem_lower2}
with $T=t_1+t_2$ to deduce that
the local solution $(\phi,\psi,\vartheta)$ satisfies \eqref{3.2} and
\eqref{3.3} for each $t\in[0,t_1+t_2]$.

\noindent{\bf Step 3}.
We repeat the argument in Step 2, to extend our
solution $(\phi,\psi,\vartheta)$ to the time interval $[0,t_1+t_2+t_3]$
with
$$t_3=\min\{T_1-(t_1+t_2),T_0(C_1^{-1},
C_4,C_2)\}.$$
Assume that  $0<\delta\leq \min\{\delta_1,\delta_2\}$.
Continuing, after finitely many steps we construct the unique solution
$(\phi,\psi,\vartheta)$ existing on $[0,T_1]$ and satisfying
 \eqref{3.2} and \eqref{3.3} for each $t\in[0,T_1]$.

\noindent{\bf Step 4}.
Since $T_1\geq 128\lambda_3^{-4}C_3^4$ and
$$\sup_{0\leq t\leq T_1}\|\vartheta(t)\|_1^2
    +\int_{{T_1}/{2}}^{T_1}\|\vartheta_{x}(t)\|_1^2\mathrm{d}t
    \leq C_2^2,$$
we can find a $t_0'\in[T_1/2,T_1]$ such that
$$\|\vartheta(t_0')\|\leq C_2,\quad
\|\vartheta_x(t_0')\|\leq \tfrac{1}{8}{C_2^{-1}}\lambda_3^2.$$
Sobolev's inequality yields
$$
\|\vartheta(t_0')\|_{L^{\infty}}\leq
\sqrt2\|\vartheta(t_0')\|^{\frac12}\|\vartheta_x(t_0')\|^{\frac12}
\leq \tfrac{1}{2}\lambda_3,$$
and so
\begin{equation*}\label{3.5}
\theta(t_0',x)\geq \hat\theta(t_0',x)-\|\vartheta(t_0')\|_{L^{\infty}}
\geq \tfrac{1}{2}\lambda_3 \quad {\rm for\ all}\ x\in\mathbb{R}_+.
\end{equation*}
We note here that
\begin{equation*}\label{3.6}
\|(\phi,\psi,\vartheta)(t_0')\|_1
    \leq C_2,\quad
  \rho(t_0',x)\geq C_1^{-1} \quad {\rm for\ all}\ x\in \mathbb{R}_+.
\end{equation*}
Applying Proposition \ref{Pro_loc} again by
taking $(\phi,\psi,\vartheta)(t_0',\cdot)$ as the initial data,
we see that the problem \eqref{per1}-\eqref{per2} admits
a unique solution  $(\phi,\psi,\vartheta)\in X(t_0',t_0'+t_1';
\tfrac{1}{2}C_1^{-1},
\tfrac{1}{4}\lambda_3,2C_2)$ with
$$t_1'=\min\{T_1,T_0(C_1^{-1},
\tfrac{1}{2}\lambda_3,C_2)\}.$$

If we take $0<\delta\leq \min\{\delta_1,\delta_2,\delta_3\}$ with
$$\Xi\left(\tfrac{1}{2}C_1^{-1},
\tfrac{1}{4}\lambda_3,2C_2\right)\delta_3=\epsilon_0,$$
then we can apply
Lemmas
\ref{lem_boun},  \ref{lem_the2} and \ref{lem_lower2}
with $T=t_0'+t_1'$ to obtain
 that for each time $t\in[t_0',t_0'+t_1']$,
the local solution $(\phi,\psi,\vartheta)$ satisfies
\eqref{3.3} and
\begin{equation}\label{3.8}
      \theta(t,x)\geq \frac{\inf_{\mathbb{R}_+}\theta(t_0',\cdot)}
      {C_3 \inf_{\mathbb{R}_+}\theta(t_0',\cdot)T_1+1}
      \geq \frac{\lambda_3}
      {C_3 \lambda_3T_1+2}=:C_5
      \quad {\rm for\ all}\ x\in \mathbb{R}_+.
\end{equation}

\noindent{\bf Step 5}.
Next if we take $(\phi,\psi,\vartheta)(t_0'+t_1',\cdot)$ as the initial data,
we  apply Proposition \ref{Pro_loc}
 and construct the solution  $(\phi,\psi,\vartheta)$ existing on the time
interval $[0,t_0'+t_1'+t_2']$ with
$$t_2'=
\min\{T_1-t'_1,T_0(C_1^{-1},C_5,C_2)\}$$
       and satisfying
\begin{equation*}
   \rho(t,x)\geq \tfrac{1}{2}C_1^{-1},\quad
  \theta(t,x)\geq  \tfrac{1}{2}C_5,\quad
  \|(\phi,\psi,\vartheta)(t)\|_1\leq 2C_2
\end{equation*}
 for all $(t,x)\in[t_0'+t_1',t_0'+t_1'+t_2']\times\mathbb{R}_+.$
Let $0<\delta\leq \min\{\delta_1,\delta_2,\delta_3,\delta_4\}$ with
$$\Xi\left(\tfrac{1}{2}C_1^{-1},\tfrac{1}{2}C_5,2C_2\right)\delta_4=\epsilon_0.$$
Then we can infer from
Lemmas
\ref{lem_boun},  \ref{lem_the2} and \ref{lem_lower2}
with $T=t_0'+t_1'+t_2'$
that
the local solution $(\phi,\psi,\vartheta)$ satisfies \eqref{3.8} and
\eqref{3.3} for each $t\in[t_0',t_0'+t_1'+t_2']$.
By assuming $0<\delta\leq \min\{\delta_1,\delta_2,\delta_3,\delta_4\}$,
we can repeatedly apply the argument above to extend the local solution
to the time interval $[0,t_0'+T_1]$.
Furthermore, we deduce that
\eqref{3.8} and
\eqref{3.3} hold for each $t\in[t_0',t_0'+T_1]$.
In view of $t_0'+T_1\geq \frac{3}{2}T_1$, we have shown that
the problem \eqref{per1}-\eqref{per2} admits a unique solution $(\phi,\psi,\vartheta)$ on
$[0,\frac{3}{2}T_1]$.

\noindent{\bf Step 6}.
We take $0<\delta\leq \min\{\delta_1,\delta_2,\delta_3,\delta_4\}$.
As in Steps 4 and 5, we can find $t_0''\in[t_0'+T_1/2,t_0'+T_1]$ such that
the problem \eqref{per1}-\eqref{per2} admits a unique solution $(\phi,\psi,\vartheta)$ on
$[0,t_0''+T_1]$, which satisfies
\eqref{3.8} and
\eqref{3.3} for each $t\in[t_0',t_0''+T_1]$.
Since $t_0''+T_1\geq t_0'+\frac{3}{2}T_1\geq 2T_1$,
we have extended the local solution $(\phi,\psi,\vartheta)$
to $[0,2T_1]$.
Repeating the above procedure, we can then extend the
solution $(\phi,\psi,\vartheta)$ step by step to a global one provided that
$\delta\leq \min\{\delta_1,\delta_2,\delta_3,\delta_4\}$.
Choosing $\epsilon_2=\min\{\delta_1,\delta_2,\delta_3,\delta_4\}$, we
derive that
the problem \eqref{per1} has a unique solution
$(\phi,\psi,\vartheta)\in X(0,\infty;C_1^{-1},\min\{C_4,C_5\},C_2)$ satisfying
\eqref{3.3} for each $t\in[0,\infty)$.

Therefore, we can find constant $C_6$ depending only on
$\inf_{x\in\mathbb{R}_+}\{\rho_0(x),\theta_0(x)\}$
  and $\|(\phi_0,\psi_0,\vartheta_0)\|_1$ such that
\begin{equation*}
    \sup_{0\leq t<\infty}\|(\phi,\psi,\vartheta)(t)\|_1^2
    +\int_0^{\infty}
    \left[\|\phi_x(t)\|^2+\|(\psi_{x},\vartheta_{x})(t)\|_1^2\right]
    \mathrm{d}t
    \leq C_6^2,
\end{equation*}
from which the large-time behavior \eqref{stability2} follows in a standard argument
(cf. \cite{MN04book}).
This completes the proof of Theorem \ref{thm2}.


\bibliographystyle{siam}

\bibliography{outflow}

\end{document}